\newtheorem{thm}{Theorem}[section]
\newtheorem{lem}[thm]{Lemma}
\newtheorem{definition}[thm]{Definition}
\newtheorem{cor}[thm]{Corollary}
\newtheorem{rem}[thm]{Remark}
\newcommand\numberthis{\addtocounter{equation}{1}\tag{\theequation}}
\newcommand{\bd}[1]{\mathbf{\boldsymbol{#1}}}
\newcommand{\R}{\mathbb{R}}
\newcommand{\Jn}{\bd{J}^\bd{n}}
\DeclareMathOperator{\argmin}{argmin}
\DeclareMathOperator{\argmax}{argmax}
\newcommand{\vertiii}[1]{{\left\vert\kern-0.25ex\left\vert\kern-0.25ex\left\vert #1 
		\right\vert\kern-0.25ex\right\vert\kern-0.25ex\right\vert}}
\title{The Hanson-Wright Inequality for Random Tensors}
\newcommand{\footremember}[2]{
	\footnote{#2}
	\newcounter{#1}
	\setcounter{#1}{\value{footnote}}
}
\author{
	Stefan Bamberger\footremember{tum}{stefan.bamberger@tum.de, Department of Mathematics, Technical University of Munich}
	\and Felix Krahmer\footremember{tum2}{felix.krahmer@tum.de, Department of Mathematics, Technical University of Munich}
	\and Rachel Ward\footremember{uta}{rward@math.utexas.edu, Department of Mathematics, University of Texas at Austin}
}
\begin{document}
	
	\maketitle
		
		\begin{abstract}
			We provide moment bounds for expressions of the type $(X^{(1)} \otimes \dots \otimes X^{(d)})^T A (X^{(1)} \otimes \dots \otimes X^{(d)})$ where $\otimes$ denotes the Kronecker product and $X^{(1)}, \dots, X^{(d)}$ are random vectors with independent, mean $0$, variance $1$, subgaussian entries. The bounds are tight up to constants depending on $d$ for the case of Gaussian random vectors. Our proof also provides a decoupling inequality for expressions of this type. Using these bounds, we obtain new, improved concentration inequalities for expressions of the form $\|B (X^{(1)} \otimes \dots \otimes X^{(d)})\|_2$.
		\end{abstract}

	
	\section{Introduction}
	
	\subsection{Background and studied objects}
	
	Given a matrix $A \in \mathbb{R}^{n \times n}$ and a random vector $X \in \mathbb{R}^n$, the Hanson-Wright inequality provides a tail bound for the chaos $X^T A X - \mathbb{E} X^T A X$. In the original work \cite{hanson1971}, $X$ was assumed to have independent subgaussian entries whose distributions are symmetric about $0$.
	
	This result has been improved and adapted to various settings in a number of works, for example \cite{rudelson2013} gives a version which holds for vectors with general subgaussian entries without the symmetry assumption of the distribution:
	
	\begin{thm}[Theorem 1.1 from \cite{rudelson2013}] \label{thm:hanson_wright}
		Let $A \in \mathbb{R}^{n \times n}$.   Let $X \in \mathbb{R}^n$ be a random vector with independent entries such that $\mathbb{E} X = 0$ and such that $X$ has a subgaussian norm of at most $K$. Then for every $t \geq 0$,
		\[
		\mathbb{P}(|X^T A X - \mathbb{E} X^T A X| > t) \leq 2 \exp\left[- c \min \left\{ \frac{t^2}{K^4 \|A\|_F^2}, \frac{t}{K^2 \|A\|_{2 \rightarrow 2} } \right\} \right]
		\]
		where $\|A\|_F$ is the Frobenius and $\|A\|_{2 \rightarrow 2}$ the spectral norm of $A$.
	\end{thm}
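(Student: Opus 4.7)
The plan is to split the chaos into diagonal and off-diagonal contributions,
\[
X^T A X - \mathbb{E} X^T A X = \sum_{i} A_{ii}\bigl(X_i^2 - \mathbb{E} X_i^2\bigr) + \sum_{i \neq j} A_{ij} X_i X_j,
\]
and treat each piece separately. For the diagonal part, every summand $A_{ii}(X_i^2 - \mathbb{E} X_i^2)$ is a mean-zero independent subexponential random variable with subexponential norm of order $K^2 |A_{ii}|$. A standard Bernstein inequality then yields a tail bound of the required two-regime form, with $\sum_i A_{ii}^2 \leq \|A\|_F^2$ playing the role of the variance proxy and $\max_i |A_{ii}| \leq \|A\|_{2 \rightarrow 2}$ the role of the peak scale.

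For the off-diagonal contribution, I would first apply a decoupling inequality of de la Pe\~na--Montgomery-Smith type to replace $(X_i, X_j)$ by $(X_i, X'_j)$, where $X'$ is an independent copy of $X$: for every convex $F$,
\[
\mathbb{E} F\Bigl(\sum_{i \neq j} A_{ij} X_i X_j\Bigr) \leq \mathbb{E} F\Bigl(4 \sum_{i,j} A_{ij} X_i X'_j\Bigr).
\]
The resulting bilinear form is then handled by conditioning. Given $X'$, the sum $\sum_{i,j} A_{ij} X_i X'_j = X^T(A X')$ is a weighted sum of independent subgaussians, so
\[
\mathbb{E}_X \exp\Bigl(\lambda\, X^T (A X')\Bigr) \leq \exp\bigl(C \lambda^2 K^2 \|A X'\|_2^2\bigr),
\]
and it remains to take the expectation over $X'$, i.e.\ to bound $\mathbb{E}\exp\bigl(s\, X'^T (A^T A)\, X'\bigr)$ for $s = C \lambda^2 K^2$.

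This last step is the main obstacle, as it requires controlling the MGF of a quadratic form in a general (non-symmetric) subgaussian vector. The essential simplification is that $A^T A$ is positive semidefinite, so one only needs the MGF to be finite on an interval $s < c/\|A\|_{2 \rightarrow 2}^2$. For Gaussian $X'$ this is explicit via the singular-value decomposition of $A$, producing a bound whose exponent grows like $s \|A\|_F^2$ and which diverges as $s \to 1/(2\|A\|_{2 \rightarrow 2}^2)$; for general subgaussian $X'$ the same bound follows by a moment or Gaussian comparison argument. Combining this MGF estimate with the decoupling inequality and the diagonal Bernstein bound, and applying Markov's inequality with an optimized $\lambda$, produces the two-regime tail estimate: the Frobenius regime arises from the variance proxy and the spectral regime from the constraint $\lambda \ll 1/(K^2 \|A\|_{2 \rightarrow 2})$. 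The symmetry assumption in \cite{hanson1971} would have allowed a direct symmetrization argument in place of the delicate non-symmetric MGF bound, which is essentially why \cite{rudelson2013} had to introduce the decoupling step described here.
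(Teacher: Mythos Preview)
The paper does not actually prove this statement: Theorem~\ref{thm:hanson_wright} is quoted verbatim from \cite{rudelson2013} as background in the introduction, with no proof given. So there is no ``paper's own proof'' to compare against.

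That said, your outline is essentially the Rudelson--Vershynin argument from \cite{rudelson2013} itself: split into diagonal and off-diagonal parts, handle the diagonal via Bernstein for subexponentials, decouple the off-diagonal part, condition on one copy to get a subgaussian linear form, and then control the remaining MGF of $\|AX'\|_2^2$. The one place where your sketch is slightly vague is the last step: in \cite{rudelson2013} the bound on $\mathbb{E}\exp(s\|AX'\|_2^2)$ is not obtained by a direct moment comparison but by a further Gaussian comparison (replacing $X'$ by a Gaussian vector $g$ via the rotation-invariance trick $\|AX'\|_2 = \sup_{z}\langle z, AX'\rangle$ and the subgaussian MGF bound), after which the Gaussian quadratic form is handled explicitly through the singular values of $A$. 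Your phrase ``the same bound follows by a moment or Gaussian comparison argument'' is correct in spirit but hides the only genuinely nontrivial maneuver in the proof; if you were writing this out in full you would need to make that step explicit.
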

	
	Today, the Hanson-Wright inequality is an important probabilistic tool and can be found in various textbooks covering the basics of signal processing and probability theory, such as \cite{foucart_rauhut} and \cite{vershynin_2018}. It has found numerous applications, in particular it has been a key ingredient for the construction of fast Johnson-Lindenstrauss embeddings \cite{riptojl}.
	
	For subgaussian $X \in \mathbb{R}^n$, linear expressions $\sum_{k = 1}^n a_k X_k$  can be controlled by Hoeffding's inequality, while quadratic (order $2$) expressions $X^T A X = \sum_{j, k = 1}^n A_{j, k} X_j X_k$ can be controlled by the Hanson-Wright inequality. Thus, it is natural to wonder to what extent such control extends to a higher-order subgaussian chaos of the form	
	\begin{equation} \label{eq:order_d_chaos_coupled}
		\sum_{i_1, \dots, i_d} A_{i_1, \dots, i_d} X_{i_1} \dots X_{i_d}.
	\end{equation}
	
	Expressions of this type for subgaussian vectors have been considered in \cite{adamczak2015concentration} where they are controlled using specific tensor norms of the arrays of all expected partial derivatives of certain degree with respect to the entries in $X$.
	
	In contrast, for \emph{independent} random vectors $X^{(1)}, \dots, X^{(d)}$, the decoupled chaos
	\begin{equation} \label{eq:order_d_chaos}
		\sum_{i_1, i_2, \dots, i_d = 1}^n A_{i_1, \dots, i_d} X^{(1)}_{i_1} \dots X^{(d)}_{i_d},
	\end{equation}
	can be controlled with simpler bounds and has been considered in multiple previous works for numerous different distributions of the random vectors \cite{latala_gauss_chaos,chaos_log_concave,kolesko2015moment}.

	In the course of adapting fast Johnson-Lindenstrauss embeddings to data with Kronecker structure as introduced in \cite{battaglino2018practical} (see also \cite{oblivious_sketching, kronecker_jl}), one encounters expressions of the form $(X^{(1)} \otimes \dots \otimes X^{(d)})^T A (X^{(1)} \otimes \dots \otimes X^{(d)})$ which are somewhat intermediate between \eqref{eq:order_d_chaos_coupled} and \eqref{eq:order_d_chaos}, as they can be expanded as
	\begin{align*}
		\sum_{i_1, \dots, i_{2 d}=1}^n A_{i_1, \dots, i_d, i_{d + 1}, \dots, i_{2 d}} X^{(1)}_{i_1} \dots X^{(d)}_{i_d} X^{(1)}_{i_{d + 1}} \dots X^{(d)}_{i_{2 d}}.
		\numberthis \label{eq:chaos_double_coupled}
	\end{align*}
	
	Such random processes are also closely related to embeddings of random tensors of the form
	\begin{align}
		\|B(X^{(1)} \otimes \dots \otimes X^{(d)})\|_2
		\label{eq:BX_norm_kronecker}
	\end{align}
	which have recently been studied by Vershynin \cite{vershynin_random_tensors}.

	Even though \eqref{eq:chaos_double_coupled} can be cast as a specific case of \eqref{eq:order_d_chaos_coupled} for which \cite{adamczak2015concentration}  provides optimal bounds, these bounds are not straightforward to use in this specific situation since they are given in terms of partial derivatives and not in terms of  the coefficients $A_{i_1, \dots, i_{2 d}}$.
	
	The main results of this paper provide moment estimates for the semi-decoupled chaos process \eqref{eq:chaos_double_coupled} that are easier to use as they are explicitly given in terms of the coefficients $A_{i_1, \dots, i_{2 d}}$.
	Our bounds imply improved estimates for \eqref{eq:BX_norm_kronecker} and lay the foundations for an order-optimal analysis of fast Kronecker-structured Johnson-Lindenstrauss embeddings. We refer the reader to our companion paper \cite{kronecker_jl_preprint} for a discussion of the implications in this regard. We nevertheless expect that our results should find broader use beyond these specific applications.
	

	\subsection{Previous work}

	For the case where $X^{(1)}, \dots, X^{(d)}$ are independent \emph{Gaussian} vectors, the concentration of \eqref{eq:order_d_chaos} has been studied in \cite{latala_gauss_chaos} which provides upper and lower moment bounds which match up to a constant factor depending only on the order $d$. We will obtain our main results for subgaussian vectors by careful reduction to the Gaussian bounds.

	Higher order chaos expressions have also been studied for distributions beyond Gaussian. Specifically, \cite{rademacher_higher_order}, Section 9, considers (\ref{eq:order_d_chaos_coupled}) for the case of Rademacher vectors. However, the bounds are more intricate than in \cite{latala_gauss_chaos} and the coefficient array $\bd{A} = (A_{i_1, \dots, i_{d}})_{i_1, \dots, i_{d}=1}^n$ must satisfy a symmetry condition and be diagonal-free, i.e., $A_{i_1, \dots, i_d} = 0$ if any two of the indices $i_1, \dots, i_d$ coincide.
	
	Upper and lower bounds on the moments of \eqref{eq:order_d_chaos} are shown in \cite{chaos_log_concave} and \cite{kolesko2015moment} for the case of symmetric random variables with logarithmically concave and convex tails, meaning that for a random variable $X \in  \mathbb{R}$, the function $t \mapsto - \log \mathbb{P}(|X| \geq t)$ is convex or concave, respectively. However, for general subgaussian random variables, neither of these has to be the case. In addition, these works only consider the decoupled chaos \eqref{eq:order_d_chaos} and provide a decoupling inequality to control \eqref{eq:order_d_chaos_coupled} for diagonal-free $\bd{A}$.
	
	Upper moment bounds for general polynomials of independent subgaussian random variables are provided in \cite{adamczak2015concentration}. Similar to our work, the authors utilize the decoupling techniques of \cite{gauss_decoupling_2}. Since \eqref{eq:chaos_double_coupled} is a polynomial in the entries of $X^{(1)}, \dots, X^{(d)}$, it can also be controlled using the results from \cite{adamczak2015concentration}. Because the aforementioned work also shows that these moment bounds are tight for the case of Gaussian vectors, one of the main results (Theorem \ref{thm:main}) of our work can also be shown using their results. However, their result bounds the corresponding $L_p$ norms in terms of norms of the array of all $d' \leq 2 d$ expected partial derivatives, meaning that significant additional work would be required to relate these derivatives to the expressions in Theorem \ref{thm:main}. We believe, that our approach is not much longer but more insightful. In addition, it provides the decoupling result Theorem \ref{thm:decoupling_total} which will be of independent interest.
	
	More work on related topics include \cite{meller2016two, meller2019tail} where upper and lower bounds for the case of random variables satisfying the moment condition $\|X\|_{2 p} \leq \alpha \|X\|_p$ are considered for the case of positive variables of order $2$. The recent work \cite{polynomials_alpha_subexp} provides a similar bound to \cite{adamczak2015concentration} for functions of the random variables that are not necessarily polynomials.
	
	The decoupling technique used in many proofs of the standard Hanson-Wright inequality relates $X^T A X$ to $X^T A \bar{X}$ where $\bar{X}$ is an independent copy of $X$. This approach was first introduced in \cite{McConnel_Taqqu_Decoupling}, already in a general higher-dimensional form. The general idea is to upper bound convex functions (e.g.~moments) of \eqref{eq:order_d_chaos_coupled} by the corresponding expressions of \eqref{eq:order_d_chaos}, up to a constant. Beside independent, symmetrically distributed entries of the random vectors, the result also requires the coefficient array to be symmetric and diagonal free.
	
	The subsequent work \cite{gauss_decoupling_1} has also shown the reverse decoupling bound, up to constant factors, proving that through  \eqref{eq:order_d_chaos}, one can also provide lower bounds on the moments of \eqref{eq:order_d_chaos_coupled} with the same assumptions on the coefficient array. However, in some applications it can be interesting to consider non-diagonal-free coefficient arrays. For example, in the scenario of $\|B (X^{(1)} \otimes \dots \otimes X^{(d)})\|_2^2$, the coefficient array $B^T B$ cannot be expected to fulfill the diagonal-free condition in general. The work in \cite{gauss_decoupling_2} lifts the restriction of a diagonal-free coefficient array and bounds the tails of slight modifications of \eqref{eq:order_d_chaos} and \eqref{eq:order_d_chaos_coupled} by each other up to certain constants in the case of Gaussian random variables.
	
	The concentration of the norm \eqref{eq:BX_norm_kronecker} has recently been studied for the subgaussian case in \cite{vershynin_random_tensors}. It is shown that
	\begin{equation} \label{eq:vershynin_bound}
		\mathbb{P}\left( \left| \|B (X^{(1)} \otimes \dots \otimes X^{(d)})\|_2 - \|B\|_F \right| > t \right) \leq 2 \exp\left( - \frac{c t^2}{d n^{d - 1} \|B\|_{2 \rightarrow 2}^2 } \right)
	\end{equation}
	for an absolute constant $c$ and for $0 \leq t \leq 2 n^\frac{d}{2} \|B\|_{2 \rightarrow 2}$. This bound suggests that techniques like the chaos moment bounds in \cite{latala_gauss_chaos} could be applied to this problem, which is what we do in this work and leads to Theorem \ref{thm:Ax_concentration} below.
	
	\subsection{Overview of our contribution}

	The goal of this work is to provide upper and lower bounds for the moments of the deviation of \eqref{eq:chaos_double_coupled} from its expectation for vectors with independent subgaussian entries (Theorem \ref{thm:main} below). Key steps of the proof include a decoupling inequality for expressions of the form \eqref{eq:chaos_double_coupled}, 
	Theorem~\ref{thm:decoupling_total}, and a comparison to Gaussian random vectors. Finally, based on our results for \eqref{eq:chaos_double_coupled}, we provide a concentration inequality for \eqref{eq:BX_norm_kronecker} as stated in Theorem \ref{thm:Ax_concentration} which extends previous results of  \cite{vershynin_random_tensors}.
	
	Possible applications of such results include recent developments in norm-preserving maps for vectors with tensor structure in the context of machine learning methods using the kernel trick \cite{battaglino2018practical, oblivious_sketching, kronecker_jl}. 

	\subsection{Notation}
	
	Our results on $X^T A X$ where $X$ is a Kronecker product of $d$ random vectors will depend crucially on the structure of the coefficient matrix $A$ rearranged as a higher-order (specifically order $2 d$) array.  As such, we must establish sophisticated notation for such arrays and their indices.
	
	Consider a vector of dimensions $\bd{n} = (n_1, n_2, \dots, n_d)$ and a subset $I \subset [d]$. We call a function $\bd{i}: I \rightarrow \mathbb{N}$ a partial index of order $d$ on $I$ if for all $l \in I$, $\bd{i}_l := \bd{i}(l) \in [n_l]$. Assume there is exactly one such function if $I = \emptyset$. If $I = [d]$, then $\bd{i}$ is called an index of order $d$. We denote the set of all partial indices of order $d$ on $I$ as $\Jn(I)$;  the set of all indices of order $d$ is denoted by $\Jn := \Jn([d])$. $\Jn$ can be identified with $[n_1] \times \dots \times [n_d]$.
	
	A function $\bd{B}: \Jn \rightarrow \R$ is called an array of order $d$. Because of the aforementioned identification, we also write $\bd{B} \in \R^{n_1 \times \dots \times n_d} =: \R^{\bd{n}}$. For $I \subset [d]$, we define $\R^{\bd{n}}(I)$ to be the set of partial arrays $\bd{B}: \Jn(I) \rightarrow \R$. For $I = [d]$, this is just the aforementioned array definition.
	
	We denote
	\[
	\|\bd{B}\|_2 := \left[ \sum_{\bd{i} \in \Jn(I)} B_{\bd{i}}^2 \right]^\frac{1}{2}
	\]
	for the Frobenius norm of the (partial) array where $B_{\bd{i}} := \bd{B}(\bd{i})$ are its entries.

	For disjoint sets $I,\,J \subset [d]$ and corresponding partial indices $\bd{i} \in \Jn(I)$, $\bd{j} \in \Jn(J)$, define the partial index $\bd{i} \dot\times \bd{j} \in \Jn(I \cup J)$ by
	\begin{equation} \label{eq:operator_dottimes}
		(\bd{i} \dot\times \bd{j})_l = 
		\begin{cases}
			\bd{i}_l & \text{if } l \in I \\
			\bd{j}_l & \text{if } l \in J.
		\end{cases}
	\end{equation}
	
	We will often work with arrays of order $2 d$ whose dimensions along the first $d$ axes are the same as the dimensions along the remaining $d$ ones.  We use the notation $\bd{n}^{\times 2} = (n_1, \dots, n_d, n_1, \dots, n_d)$ to denote such arrays.
	
	For sets $I \subset [2 d]$, $J \subset [d]$ such that $I \cap (J + d) = \emptyset$ and for corresponding partial indices $\bd{i} \in \Jn(I)$, $\bd{j} \in \Jn(J)$, define the partial index $\bd{i} \dot+ \bd{j} \in \bd{J}^{\bd{n}^{\times 2}}(I \cup (J + d))$ by
	\begin{equation} \label{eq:operator_dotplus}
		(\bd{i} \dot+ \bd{j})_l = \begin{cases}
			\bd{i}_l & \text{if } l \in I \\
			\bd{j}_{l - d} & \text{if } l \in J + d.
		\end{cases}
	\end{equation}
	
	For $\bd{i} \in \Jn(I)$ and $J \subset I$, define $\bd{i}_J \in \Jn(J)$ to be the restriction of $\bd{i}$ to $J$, i.e., $(\bd{i}_J)_l = \bd{i}_l$ for all $l \in J$.
	
	As suggested by the explanations above, our convention is to use bold letters for higher order arrays (e.g., $\bd{A}$) while their entries are denoted in non-bold letters (e.g., $A_{\bd{i}}$). For some of our results, we will convert matrices into higher-order arrays by rearranging their entries. In these cases, we will denote the matrices in non-bold letters and use the same letter in bold for the array, e.g., $A$ and $\bd{A}$. For the entries, it will be clear from the indices which object is being referred to.
	Besides that, we will also always use bold letters for array indices (e.g., $\bd{i}$), for vectors of array dimensions (e.g. $\bd{n}$), and for the set $\Jn$.
	
	We denote $Id_n \in \R^{n \times n}$ for the identity matrix,  $\|A\|_F$ for the Frobenius norm of a matrix, and $\|A\|_{2 \rightarrow 2}$ for the spectral norm of a matrix.
	
	For a random variable $Y \in \R$, we define $\|Y\|_{L_p} := (\mathbb{E}|Y|^p)^{1 / p}$ and we define the subgaussian norm $\|Y\|_{\psi_2} := \sup_{p \geq 1} \|Y\|_{L_p} / \sqrt{p}$. For a random vector $X \in \R^n$, we define the subgaussian norm $\|X\|_{\psi_2} := \sup_{v \in \R^n, \|v\|_2 = 1} \|\langle X, v \rangle \|_{\psi_2}$, and we call $X$ isotropic if $\mathbb{E} X X^T = Id_n$.
	
	\subsection{Previous relevant results}
	Since our result is based on the bounds given by Latala in \cite{latala_gauss_chaos}, we also consider the following norms which are also used in that result. In our notation, the norms of interest are stated as follows.
	
	\begin{definition} \label{def:tensor_norm}
		For $\bd{n} \in \mathbb{N}^d$ and an array $\bd{B} \in \mathbb{R}^{\bd{n}}$, we define the following norms for any partition $I_1, \dots, I_\kappa$ of $[d]$.
		\begin{align*}
			\|\bd{B}\|_{I_1, \dots, I_\kappa} :=
			\sup_{\substack{\bd{\alpha}^{(1)} \in \mathbb{R}^{\bd{n}}(I_1), \dots, \bd{\alpha}^{(\kappa)} \in \mathbb{R}^{\bd{n}}(I_\kappa), \\ \|\bd{\alpha}^{(1)}\|_2 = \dots = \|\bd{\alpha}^{(\kappa)}\|_2 = 1}} \quad  \sum_{\bd{i} \in \Jn} B_{\bd{i}} \alpha^{(1)}_{\bd{i}_{I_1}} \dots \alpha^{(\kappa)}_{\bd{i}_{I_\kappa}}.
		\end{align*}
	\end{definition}
	For example, when $d=2$, the array $\bd{B}$ is a matrix and $\| \cdot \|_{\{1,2\}}$ coincides with the Frobenius and $\|\cdot\|_{\{1\}, \{2\}}$ with the spectral norm.
	Latala \cite{latala_gauss_chaos} proved the following upper and lower moment bounds for a decoupled Gaussian chaos of arbitrary order.
	\begin{thm}[Theorem 1 in \cite{latala_gauss_chaos}] \label{thm:gauss_chaos_moments}
		Let $\bd{n} \in \mathbb{N}^d$, $\bd{B} \in \mathbb{R}^{\bd{n}}$, $p \geq 2$.
		
		Let $S(d, \kappa)$ denote the set of partitions of $[d]$ into $\kappa$ nonempty disjoint subsets.  Define
		\begin{align} \label{eq:definition_mpB}
			m_p(\bd{B}) := \sum_{\kappa = 1}^d p^{\kappa / 2} \sum_{(I_1, \dots, I_\kappa) \in S(d, \kappa)} \|\bd{B}\|_{I_1, \dots, I_\kappa}.
		\end{align}
		
		Consider independent Gaussian random vectors $g^{(1)} \sim N(0, Id_{\bd{n}_1}), \dots, g^{(d)} \sim N(0, Id_{\bd{n}_d})$. Then
		\begin{align*}
			\frac{1}{C(d)} m_p(\bd{B}) \leq \left\| \sum_{\bd{i} \in \Jn} B_{\bd{i}} \prod_{l \in [d]} g^{(l)}_{\bd{i}_l} \right\|_{L_p} \leq C(d) m_p(\bd{B}),
		\end{align*}
		where $C(d) > 0$ is a constant that only depends on $d$.
	\end{thm}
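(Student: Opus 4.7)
The plan is to prove Latala's theorem by induction on the order $d$, treating the upper and lower bounds separately. The base case $d=1$ reduces to the classical Gaussian moment inequality $\|\sum_i B_i g_i\|_{L_p} \asymp \sqrt{p}\, \|B\|_2$, and since $S(1,1)=\{(\{1\})\}$ with $\|\bd{B}\|_{\{1\}} = \|\bd{B}\|_2$, the definition of $m_p(\bd{B})$ matches this up to absolute constants.

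For the upper bound inductive step, I would condition on $g^{(1)}, \dots, g^{(d-1)}$ so that $Z := \sum_{\bd{i}} B_{\bd{i}} \prod_l g^{(l)}_{\bd{i}_l}$ is a centered Gaussian in $g^{(d)}$ with conditional variance $\sigma^2 = \sum_{i_d} \bigl(\sum_{\bd{i}_{[d-1]}} B_{\bd{i}} \prod_{l < d} g^{(l)}_{\bd{i}_l}\bigr)^2$. By the conditional Gaussian moment bound, $\|Z\|_{L_p} \lesssim \sqrt{p}\, \|\sigma\|_{L_p}$. Writing $\sigma = \sup_{\|u\|_2=1} \sum_{i_d} u_{i_d} \sum_{\bd{i}_{[d-1]}} B_{\bd{i}} \prod_{l<d} g^{(l)}_{\bd{i}_l}$, one can bound $\|\sigma\|_{L_p}$ by applying the inductive hypothesis to the order $d-1$ array $\bd{B}^{(u)}_{\bd{i}_{[d-1]}} := \sum_{i_d} u_{i_d} B_{\bd{i}}$ together with a supremum bound over $u$. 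Expanding the resulting $m_p(\bd{B}^{(u)})$ and taking the supremum produces exactly the partitions of $[d]$ appearing in $m_p(\bd{B})$: each partition $(I_1,\dots,I_{\kappa-1})$ of $[d-1]$ either absorbs index $d$ into one of the $I_j$ (yielding the same number of blocks) or forms a new singleton block $\{d\}$ (adding an extra factor of $\sqrt{p}$), matching the recursive structure of $m_p$.

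The lower bound is the genuinely hard direction. For each partition $(I_1,\dots,I_\kappa) \in S(d,\kappa)$, one must demonstrate that the $L_p$ norm is at least $p^{\kappa/2} \|\bd{B}\|_{I_1,\dots,I_\kappa}$ up to a dimension-free constant. The strategy is to choose, for each $(I_1,\dots,I_\kappa)$, near-optimal unit test tensors $\bd{\alpha}^{(j)} \in \R^{\bd{n}}(I_j)$ realizing the norm, and then to exhibit a lower bound on the chaos $L_p$ norm built from these. One standard route is to condition on the vectors $g^{(l)}$ for $l$ in all but one of the blocks, reducing to a lower-order chaos on the remaining block, and to use concentration of Gaussian chaos (Paley--Zygmund type arguments) to guarantee that with probability $\gtrsim e^{-p}$ each conditional chaos is of size $p^{1/2}$ times its norm. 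Iterating this across the $\kappa$ blocks yields the $p^{\kappa/2}$ factor.

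The main obstacle I expect is this lower bound, specifically the combinatorial task of showing that all $\sum_\kappa |S(d,\kappa)|$ norm contributions can be extracted simultaneously from a single chaos, rather than from a partition-specific construction. Handling this requires careful truncation of the test tensors, a dyadic decomposition of their entries, and a pigeonhole argument isolating a ``regular'' block on which a Paley--Zygmund-type lower bound applies uniformly. The constants must be tracked so that they absorb into $C(d)$. The upper-bound induction, though tedious, should be conceptually routine given the Gaussian moment inequality and Hölder-type interpolation between the various $\|\bd{B}\|_{I_1,\dots,I_\kappa}$ norms.
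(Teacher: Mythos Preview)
The paper does not prove this theorem. It is stated in the ``Previous relevant results'' subsection as Theorem~1 of Latala's paper \cite{latala_gauss_chaos} and is used as a black box throughout (in Step~4 of the proof of the upper bound of Theorem~\ref{thm:main} and in Step~4 of the lower bound). There is therefore no ``paper's own proof'' to compare your proposal against; you are attempting to reprove an external result that the authors simply cite.

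That said, your sketch contains a genuine gap even as an outline of Latala's argument. In the upper-bound step you write $\sigma = \sup_{\|u\|_2=1} \langle u, Y\rangle$ with $Y_{i_d} = \sum_{\bd{i}_{[d-1]}} B_{\bd{i}} \prod_{l<d} g^{(l)}_{\bd{i}_l}$, and then propose to bound $\|\sigma\|_{L_p}$ by applying the inductive hypothesis to the order-$(d-1)$ chaos $\bd{B}^{(u)}$ and ``taking the supremum''. But $\|\sup_u X_u\|_{L_p}$ is not controlled by $\sup_u \|X_u\|_{L_p}$; commuting the supremum past the $L_p$ norm in this direction is exactly the hard step. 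What you actually need is a bound on $\|\sigma\|_{L_p} = \bigl\|\bigl(\sum_{i_d} Y_{i_d}^2\bigr)^{1/2}\bigr\|_{L_p}$, i.e.\ a moment bound for the $\ell_2$ norm of a vector of order-$(d-1)$ chaoses, which does not follow from the scalar induction hypothesis alone. Latala handles this with a more delicate iteration and an entropy/metric argument rather than naive induction; your outline would need a substantial additional ingredient here (e.g.\ a vector-valued chaos bound or a chaining argument for the supremum) before it could close. Your remarks on the lower bound correctly identify it as the harder direction, but the sketch there is too vague to assess.
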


	\section{Main results}
	The main contribution of our work is the following new tail bound for $\|A (X^{(1)} \otimes \dots \otimes X^{(d)})\|_2$. Note that it contains the deviation of the non-squared norm. This improves upon the previous result by Vershynin \cite{vershynin_random_tensors} as described in \eqref{eq:vershynin_bound}, up to the constant $C(d)$. By comparison, our result provides a strictly stronger bound for matrices with smaller Frobenius norm and holds for all $t \geq 0$.
	
	\begin{thm} \label{thm:Ax_concentration}
		Let $A \in \R^{n_0 \times n^d}$ be a matrix, $X^{(1)}, \dots, X^{(d)} \in \R^{n}$ independent random vectors with independent, mean $0$, variance $1$ entries with subgaussian norm bounded by $L \geq 1$, and let $X := X^{(1)} \otimes \dots \otimes X^{(d)} \in \R^{n^d}$. Then for a constant $C(d)$ depending only on $d$ and for any $t > 0$,

		\begin{align*}
			\mathbb{P}\left( \left| \|A X\|_2 - \|A\|_F \right| > t \right) \leq
			\begin{cases}
				e^2 \exp\left( - C(d) \frac{t^2}{n^{d - 1} \|A\|_{2 \rightarrow 2}^2} \right)
				& \text{if } t \leq n^{\frac{d}{2}} \|A\|_{2 \rightarrow 2} \\
				e^2 \exp\left( - C(d) \left( \frac{t}{\|A\|_{2 \rightarrow 2}} \right)^\frac{2}{d} \right) & \text{if } t \geq n^\frac{d}{2} \|A\|_{2 \rightarrow 2} \\
				e^2 \exp\left( - C(d) \frac{t^2}{ n^{\frac{d - 1}{2}} \|A\|_{F}^2 } \right) & \text{if } n^{\frac{d - 1}{4}} \|A\|_{2 \rightarrow 2} \leq t \leq n^{\frac{d - 1}{4}} \|A\|_{F}.
			\end{cases}
		\end{align*}
	\end{thm}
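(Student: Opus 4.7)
The strategy is to reduce to a moment bound for the squared norm via the semi-decoupled chaos in \eqref{eq:chaos_double_coupled}, and then translate that into a tail bound for the non-squared norm. The plan has four conceptual steps.

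First, I would reduce the problem to a centered chaos. Writing $\|AX\|_2^2 = X^T (A^T A) X$ with $X = X^{(1)} \otimes \dots \otimes X^{(d)}$, and using that each $X^{(l)}$ is isotropic so that $\mathbb{E}\|AX\|_2^2 = \|A\|_F^2$, the centered squared norm is exactly an expression of the form \eqref{eq:chaos_double_coupled} with coefficient array $\bd{M} \in \R^{\bd{n}^{\times 2}}$ obtained by reshaping the $n^d \times n^d$ matrix $A^T A$, where $\bd{n} = (n, \dots, n)$. I would then invoke the paper's main moment estimate (Theorem~\ref{thm:main}), which bounds $\bigl\| \|AX\|_2^2 - \|A\|_F^2 \bigr\|_{L_p}$ by a Latala-type sum of the form $\sum_\kappa p^{\kappa/2} \sum_{(I_1, \dots, I_\kappa)} \|\bd{M}\|_{I_1, \dots, I_\kappa}$ over partitions of $[2d]$, up to constants depending on $d$ and on the subgaussian parameter $L$.

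Second, I would bound each tensor norm $\|\bd{M}\|_{I_1, \dots, I_\kappa}$ in terms of $\|A\|_F$, $\|A\|_{2 \to 2}$, and powers of $n$. The coarsest partition yields $\|\bd{M}\|_{[2d]} = \|A^T A\|_F \leq \|A\|_{2 \to 2} \|A\|_F$; the splitting of $[2d]$ into its two halves yields $\|A\|_{2 \to 2}^2$ via the dual characterization of the spectral norm applied to $A$ acting on elementary Kronecker tensors; intermediate partitions can be interpolated between these two extremes by bounding $\ell_2$ norms over certain blocks by $\ell_\infty$ norms times the corresponding powers of $n^{1/2}$. This combinatorial bookkeeping is what determines the exact $n$-dependence and which of the three regimes of the theorem each partition feeds into.

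Third, I would apply Markov's inequality with optimal $p$ to turn the moment bound on the squared norm into a tail bound, and then use the identity $(a - b)(a + b) = a^2 - b^2$ with $a = \|AX\|_2 \geq 0$ and $b = \|A\|_F$ to convert it into a tail bound for the non-squared norm. The key observation is that $|a - b| > t$ forces $|a^2 - b^2| > t \max(t, \|A\|_F)$, so small deviations $t \lesssim \|A\|_F$ inherit an extra factor of $\|A\|_F$ from the $a + b$ factor (producing the subgaussian regimes $t^2 / (n^{d-1} \|A\|_{2\to 2}^2)$ and $t^2 / (n^{(d-1)/2} \|A\|_F^2)$ from the $\kappa = 1$ tensor-norm term), while large deviations $t \gtrsim \|A\|_F$ only convert by squaring, producing the $(t / \|A\|_{2 \to 2})^{2/d}$ regime from the $\kappa = 2d$ term.

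The main obstacle will be step two: verifying for every partition of $[2d]$ that $\|\bd{M}\|_{I_1, \dots, I_\kappa}$ admits a bound of the form (power of $n$) times a product of $\|A\|_F$ and $\|A\|_{2\to 2}$ factors whose total exponent is two, and then identifying the dominant partition in each $t$-regime. All other steps — the reduction to a chaos, Markov's inequality, and the squared-to-non-squared trick — are routine once the moment bound is in hand.
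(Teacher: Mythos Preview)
Your overall four–step strategy matches the paper's: reduce to the centered chaos $\|AX\|_2^2 - \|A\|_F^2$, invoke Theorem~\ref{thm:main}, bound the resulting tensor norms in terms of $\|A\|_F$, $\|A\|_{2\to 2}$ and powers of $n$, and then pass to tails. The squared-to-non-squared conversion you propose at the tail level via $|a-b|>t \Rightarrow |a^2-b^2|>t\max(t,\|A\|_F)$ is the same idea as the paper's Lemma~\ref{lem:a2b2_ab_comparison}; the paper simply does it at the moment level first (Corollary~\ref{cor:moments_non_squared}) and only afterwards applies Markov via Lemma~\ref{lem:moment_tail_bound}. That reordering is immaterial.

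There is, however, a real gap in your step one. Theorem~\ref{thm:main} does \emph{not} bound $\|X^TBX - \mathbb{E}X^TBX\|_{L_p}$ by a Latala sum over partitions of $[2d]$ of $\|\bd{B}\|_{I_1,\dots,I_\kappa}$. Its $m_p$ is a double sum: over all $I\subsetneq[d]$ \emph{and} over partitions of $I^c\cup(I^c+d)$, applied to the partially traced arrays $\bd{B}^{(I)}$ from \eqref{eq:definition_BI}. The terms with $I\neq\emptyset$ are not a technicality you can drop: they carry factors of $n^{|I|/2}$ (Lemma~\ref{lem:norm_B_I}) and are precisely what produces the $n^{(d-1)/2}$ and $n^{d-1}$ dependence in the final statement. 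Keeping only $I=\emptyset$ would give a smaller quantity that is \emph{not} known to upper-bound the $L_p$ norm (and the Gaussian lower bound in Theorem~\ref{thm:main} shows it cannot in general). So your step two must bound $\|\bd{B}^{(I)}\|_{I_1,\dots,I_\kappa}$ for every $I\subsetneq[d]$, not just $\|\bd{B}\|_{I_1,\dots,I_\kappa}$. The paper does this via the two inequalities $\|\bd{B}^{(I)}\|_{I_1,\dots,I_\kappa}\le n^{|I|/2}\|B\|_F$ and $\|\bd{B}^{(I)}\|_{I_1,\dots,I_\kappa}\le n^{d-\kappa/2}\|B\|_{2\to 2}$, whose proof (Lemmas~\ref{lem:join_sep_norm}, \ref{lem:norm_B_I} plus a pigeonhole argument on singleton blocks of the partition) is more delicate than the ``interpolate by replacing $\ell_2$ by $\ell_\infty$'' sketch you give.
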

	
	Note that the third interval intersects the first two intervals. In any interval of intersection, both bounds hold.

	\begin{rem}
		In addition to extending the previous result in \eqref{eq:vershynin_bound} from \cite{vershynin_random_tensors} to all $t \geq 0$, our result provides a strict improvement of that result for matrices with stable rank $(\|A\|_F / \|A\|_{2 \rightarrow 2})^2 \in (1, n^{\frac{d}{2}} ]$. Corollary \ref{cor:moments_non_squared} provides more complicated but provably optimal moment bounds.
	\end{rem}

	This theorem is a consequence of the following result which gives a generalization of the Hanson-Wright inequality (Theorem \ref{thm:hanson_wright}) in terms of upper and lower moment bounds. Note that the operators $\dot\times$ and $\dot+$ are defined in \eqref{eq:operator_dottimes} and \eqref{eq:operator_dotplus}.
	
	\begin{thm} \label{thm:main}
		For $d \geq 1$, let $\bd{n} = (n_1, \dots, n_d)$ be a vector of dimensions, and let $N = n_1 \dots n_d$.
		
		Let $A \in \R^{N \times N}$ and $X^{(1)} \in \mathbb{R}^{n_1}, \dots, X^{(d)} \in \mathbb{R}^{n_d}$ be random vectors with independent, mean $0$, variance $1$ entries with subgaussian norms bounded by $L \geq 1$. Define $X := X^{(1)} \otimes \dots \otimes X^{(d)}$. There exists a constant $C(d)$, depending only on $d$, such that for all $p \geq 2$,
		
		\[
		\left\| X^T A X - \mathbb{E} X^T A X \right\|_{L_p} \leq C(d) m_p.
		\]
		
		The numbers $m_p$ are defined as follows. By rearranging its entries, regard $A$ as an array $\bd{A} \in \R^{\bd{n}^{\times 2}}$ of order $2 d$ such that
		\[
		X^T A X = \sum_{\bd{i}, \bd{i}' \in \Jn} A_{\bd{i} \dot+ \bd{i}'} \prod_{l \in [d]} X^{(l)}_{\bd{i}_l} X^{(l)}_{\bd{i}'_l}.
		\]
		
		For any $I \subset [d]$ and for $I^c = [d] \backslash I$, define $\bd{A}^{(I)} \in \R^{\bd{n}^{\times 2}}(I^c \cup (I^c + d))$ by
		\begin{align} \label{eq:definition_BI}
			A^{(I)}_{\bd{i} \dot+ \bd{i}'} =
			\sum_{\bd{k} \in \Jn(I)} A_{(\bd{i} \dot\times \bd{k}) \dot+ (\bd{i}' \dot\times \bd{k})}
		\end{align}
		for all $\bd{i}, \bd{i}' \in \Jn(I^c)$.
		
		For $T \subset [2 d]$ and $1 \leq \kappa \leq 2d$, denote by $S(T, \kappa)$ the set of partitions of $T$ into $\kappa$ sets. Then for any $p \geq 1$, define
		\begin{align*}
			m_p := L^{2 d} \sum_{\kappa = 1}^{2 d} p^\frac{\kappa}{2}
			\sum_{\substack{I \subset [d] \\ I \neq [d]}} \quad 
			\sum_{(I_1, \dots, I_\kappa) \in S((I^c) \cup (I^c + d), \kappa)} \|\bd{A}^{(I)}\|_{I_1, \dots, I_\kappa}.
		\end{align*}
		
		If in addition, $X^{(1)} \sim N(0, Id_{n_1}), \dots, X^{(d)} \sim N(0, Id_{n_d})$ are normally distributed (i.e.~$L$ is constant), and $\bd{A}$ satisfies the symmetry condition that for all $l \in [d]$ and any $\bd{i}, \bd{i}' \in \Jn([d] \backslash \{l\})$, $\bd{j}, \bd{j}' \in \Jn(\{l\})$,
		\begin{align} \label{eq:symmetry_condition}
			A_{(\bd{i} \dot\times \bd{j}) \dot+ (\bd{i}' \dot\times \bd{j}')} = 
			A_{(\bd{i} \dot\times \bd{j}') \dot+ (\bd{i}' \dot\times \bd{j})},
		\end{align}
		then also the lower bound
		\begin{align*}
			\tilde{C}(d) m_p \leq
			\left\| X^T A X - \mathbb{E} X^T A X \right\|_{L_p}
		\end{align*}
		holds for all $p \geq 2$.  Here, $\tilde{C}(d) > 0$ only depends on $d$.
	\end{thm}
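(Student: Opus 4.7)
The plan is to reduce the coupled chaos $X^T A X - \mathbb{E}[X^T A X]$ to a sum of decoupled Gaussian chaoses of lower total order---one for each subset $I \subsetneq [d]$---then apply Latala's moment bound (Theorem~\ref{thm:gauss_chaos_moments}) to each, and finally pass from the Gaussian to the general subgaussian setting by a stochastic domination argument. The starting step is a diagonal decomposition: writing each factor as $X^{(l)}_{\bd{i}_l} X^{(l)}_{\bd{i}'_l} = \delta_{\bd{i}_l, \bd{i}'_l} + W^{(l)}_{\bd{i}_l, \bd{i}'_l}$ with $W^{(l)}$ mean zero, expanding the product over $l$, and subtracting $\mathbb{E}[X^T A X]$, one obtains
\begin{equation*}
	X^T A X - \mathbb{E}[X^T A X] \;=\; \sum_{I \subsetneq [d]} \; \sum_{\bd{i}, \bd{i}' \in \Jn(I^c)} A^{(I)}_{\bd{i} \dot+ \bd{i}'} \prod_{l \in I^c} W^{(l)}_{\bd{i}_l, \bd{i}'_l},
\end{equation*}
where $\bd{A}^{(I)}$ is exactly the contracted array defined in \eqref{eq:definition_BI}. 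By the triangle inequality, it suffices to bound the $L_p$ norm of each of the $2^d - 1$ pieces by the corresponding partial sum inside $m_p$.

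For each fixed $I$, I invoke the decoupling inequality Theorem~\ref{thm:decoupling_total} applied to the coupled chaos in the $|I^c|$ vectors $(X^{(l)})_{l \in I^c}$: up to a $d$-dependent constant, its $L_p$ norm is bounded by that of the fully decoupled analogue
\begin{equation*}
	\sum_{\bd{i}, \bd{i}' \in \Jn(I^c)} A^{(I)}_{\bd{i} \dot+ \bd{i}'} \prod_{l \in I^c} X^{(l)}_{\bd{i}_l} \bar{X}^{(l)}_{\bd{i}'_l},
\end{equation*}
with $\bar{X}^{(l)}$ independent copies of $X^{(l)}$. In the Gaussian case this is a Latala chaos of order $2|I^c|$ in $2|I^c|$ independent standard Gaussian vectors, so Theorem~\ref{thm:gauss_chaos_moments} applied to the $2|I^c|$-th order array $\bd{A}^{(I)}$ immediately yields the bound $\sum_\kappa p^{\kappa/2} \sum_{(I_1, \dots, I_\kappa) \in S(I^c \cup (I^c+d), \kappa)} \|\bd{A}^{(I)}\|_{I_1,\dots,I_\kappa}$. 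Summing over $I \subsetneq [d]$ produces the Gaussian upper bound $C(d) m_p$. Since $\bd{A}$ is not assumed diagonal-free, the content of Theorem~\ref{thm:decoupling_total}, which we expect to rest on the Arcones--Gin\'e Gaussian decoupling, is precisely what is needed to push this step through.

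To extend to general subgaussian $X^{(l)}$ with $\|X^{(l)}\|_{\psi_2} \leq L$, I stochastically dominate each decoupled bilinear factor $X^{(l)}_i \bar{X}^{(l)}_j$ by $C L^2 g^{(l)}_i \bar{g}^{(l)}_j$ with independent standard Gaussians, following the framework described in the paper's sketch; because each $I$-piece is multilinear in the $2|I^c|$ independent vectors, the comparison applies coordinatewise and contributes a factor $L^{2|I^c|} \leq L^{2d}$. This bilinear Gaussian comparison is the main technical obstacle: standard subgaussian-to-Gaussian comparisons are clean for linear expressions, but the product structure $X^{(l)}_i \bar{X}^{(l)}_j$ requires a careful $L_p$ domination argument at every order $p$. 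For the Gaussian lower bound under the symmetry assumption \eqref{eq:symmetry_condition}, the different $I$-pieces in the decomposition behave as mutually orthogonal Wiener-type chaoses---a consequence of \eqref{eq:symmetry_condition}---so one can isolate each piece individually (via Hermite projection or conditioning on an independent copy) and apply the matching lower bound of Theorem~\ref{thm:gauss_chaos_moments} to each, summing to $\tilde C(d) m_p$.
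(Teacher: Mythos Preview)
Your initial algebraic decomposition is correct and elegant---it is exactly Lemma~\ref{lem:rearrange_m1} in the paper, where each piece
\[
S_I \;=\; \sum_{\bd{i}, \bd{i}' \in \Jn(I^c)} A^{(I)}_{\bd{i}\dot+\bd{i}'}\prod_{l\in I^c}\bigl(X^{(l)}_{\bd{i}_l}X^{(l)}_{\bd{i}'_l}-\delta_{\bd{i}_l,\bd{i}'_l}\bigr)
\]
appears. The paper in fact uses this decomposition, but only for the \emph{lower} bound (via Lemma~\ref{lem:remove_mean_inv}, which isolates each $S_I$ by conditioning and Jensen---this is the rigorous version of your ``orthogonal Wiener chaoses'' idea, and it does not require Hermite projections or the symmetry hypothesis at that stage).

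The gap is in your decoupling step for the upper bound. You claim that Theorem~\ref{thm:decoupling_total}, applied to the $|I^c|$-vector chaos, bounds $\|S_I\|_{L_p}$ by the single fully decoupled term $\|\sum A^{(I)}\prod X^{(l)}\bar X^{(l)}\|_{L_p}$. This misreads the theorem twice. First, the input to Theorem~\ref{thm:decoupling_total} is $\sum A^{(I)}\prod X^{(l)}X^{(l)}-\mathbb{E}[\cdots]$, which is \emph{not} $S_I$: by your own expansion it equals $\sum_{I'\subsetneq I^c} S_{I\cup I'}$, a sum of many of your pieces. Second, the \emph{output} of Theorem~\ref{thm:decoupling_total} is not the fully decoupled chaos but a sum over pairs $J'\subset I'$ of mixed terms carrying both decoupled factors $X^{(l)}\bar X^{(l)}$ (for $l\in (I')^c$) and centered squares $\bigl[(X^{(l)})^2-1\bigr]$ (for $l\in J'$). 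The fully decoupled term you want is only the single summand $I'=J'=\emptyset$.

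This is not a cosmetic issue, because your subsequent ordering---decouple first in the subgaussian world, then compare to Gaussians---relies on an inequality $\|S_I\|_{L_p}\le C\|\tilde S_I\|_{L_p}$ for general subgaussian entries that is nowhere established. The paper's route is the reverse: apply Theorem~\ref{thm:decoupling_total} once to the \emph{full} chaos to obtain the mixed $(I,J)$ terms; replace the linear factors $X^{(l)},\bar X^{(l)}$ by Gaussians via Lemma~\ref{lem:gauss_comparison_linear} and the quadratic factors $(X^{(l)})^2-1$ by $(g^{(l)})^2-1$ via the stochastic-domination result Theorem~\ref{thm:quad_gauss_replacement}; and only \emph{after} one is in the Gaussian world use the Arcones--Gin\'e identity (Lemma~\ref{lem:gaussian_decoupling}) to turn $(g^{(l)})^2-1$ into $g^{(l)}\bar g^{(l)}$. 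Latala's bound then applies to the array $\bd{A}^{(I,J)}$, and the diagonal-restriction inequality of Lemma~\ref{lem:diagonal_array_norm} collapses $\|\bd{A}^{(I,J)}\|$ to $\|\bd{A}^{(I\setminus J)}\|$, giving exactly the sum defining $m_p$. In short, the Gaussian comparison must be done \emph{before} the final decoupling of the squared terms, not after, and Theorem~\ref{thm:decoupling_total} itself contains no Arcones--Gin\'e input---it is purely iterated order-$2$ decoupling plus the combinatorial rearrangement of Theorem~\ref{thm:rearrange_minus1}.
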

	
	Note that these upper bounds can directly be converted to tail bounds in the style of Theorems \ref{thm:hanson_wright} or \ref{thm:Ax_concentration} using Lemma \ref{lem:moment_tail_bound}. 
	After introducing the required tools, the proof of Theorem \ref{thm:main} will be split up into two parts. We will prove the upper bound in Subsection \ref{sec:proof_upper_bound} and then the lower bound in Subsection \ref{sec:proof_lower_bound}.
	
	\begin{rem}
		The symmetry condition required for the lower bound is not satisfied for all matrices. However, for any matrix $A$, we can find a matrix $\tilde{A}$ satisfying the symmetry condition and such that $X^T A X = X^T \tilde{A} X$ always holds. To do this, in the array notation we can define $\tilde{\bd{A}}$ by transposing $\bd{A}$ along all possible sets of axes and then taking the mean $\tilde{A}_{\bd{i} \dot+ \bd{i}'} =
		\frac{1}{2^d} \sum_{I \subset [d]} A_{(\bd{i}_{I^c} \dot\times \bd{i}'_{I}) \dot+ (\bd{i}_{I} \dot\times \bd{i}'_{I^c})}$ for any $\bd{i}, \bd{i}' \in \Jn$.
		This is a generalization of taking $\tilde{A} = \frac{1}{2} (A + A^T)$ for $d = 1$. Note however, that $\tilde{A}$ might have significantly smaller norms than $A$ which is why the lower moment bounds in Theorem \ref{thm:main} might not hold for $A$ directly.
	\end{rem}
	
	A central part of our argument is the following specialized decoupling result for expressions as in (\ref{eq:chaos_double_coupled}) which might be of independent interest.

	\begin{thm} \label{thm:decoupling_total}
		Let $\bd{n} = (n_1, \dots, n_d) \in \mathbb{N}^d$, $\bd{A} \in \mathbb{R}^{\bd{n}^{\times 2}}$, $X^{(1)} \in \R^{n_1}, \dots, X^{(d)} \in \R^{n_d}$ random vectors with independent mean $0$, variance $1$ entries and $\bar{X}^{(1)}, \dots, \bar{X}^{(d)}$ corresponding independent copies. Then
		\begin{align*}
			& \left\|\sum_{\bd{i}, \bd{i}' \in \Jn} A_{\bd{i} \dot+ \bd{i}'} \prod_{l \in [d]} X^{(l)}_{\bd{i}_l} X^{(l)}_{\bd{i}'_l} - \mathbb{E} \sum_{\bd{i}, \bd{i}' \in \Jn} A_{\bd{i} \dot+ \bd{i}'} \prod_{l \in [d]} X^{(l)}_{\bd{i}_l} X^{(l)}_{\bd{i}'_l} \right\|_{L_p} \\
			& \leq
			\sum_{\substack{I, J \subset [d]: \\ J \subset I,\, I \backslash J \neq [d]}}
			4^{d - |I|} \left\| \sum_{\substack{\bd{i} \in \Jn(J) \\ \bd{j} \in \Jn(I \backslash J) \\ \bd{k}, \bd{k}' \in \Jn(I^c)}} A_{(\bd{i} \dot\times \bd{j} \dot\times \bd{k}) \dot+ (\bd{i} \dot\times \bd{j} \dot\times \bd{k}')} \prod_{l \in J} \left[ (X^{(l)}_{\bd{i}_l})^2 - 1 \right] \prod_{l \in I^c} X^{(l)}_{\bd{k}_l} \bar{X}^{(l)}_{\bd{k}'_l} \right\|_{L_p}
		\end{align*}
	\end{thm}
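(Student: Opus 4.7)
The plan is to decompose the pair $X^{(l)}_{\bd{i}_l} X^{(l)}_{\bd{i}'_l}$ at each coordinate $l \in [d]$ into three mutually exclusive pieces that reflect the three possible ``states'' visible in the right-hand side of the theorem:
\[
X^{(l)}_{\bd{i}_l} X^{(l)}_{\bd{i}'_l} = \mathbbm{1}_{\bd{i}_l = \bd{i}'_l}\cdot 1 \;+\; \mathbbm{1}_{\bd{i}_l = \bd{i}'_l}\bigl[(X^{(l)}_{\bd{i}_l})^2 - 1\bigr] \;+\; \mathbbm{1}_{\bd{i}_l \neq \bd{i}'_l}\, X^{(l)}_{\bd{i}_l} X^{(l)}_{\bd{i}'_l}.
\]
Expanding the product over $l \in [d]$ produces $3^d$ terms, each indexed by a partition $(I\setminus J,\, J,\, I^c)$ of $[d]$: the coordinates in $I \setminus J$ are ``deterministic diagonal,'' those in $J$ contribute centered squares $(X^{(l)})^2 - 1$, and those in $I^c$ are strictly off-diagonal. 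By independence and $\mathbb{E} (X^{(l)}_{\bd{i}_l})^2 = 1$, the unique term with $I\setminus J = [d]$ (all coordinates deterministic) reduces exactly to $\mathbb{E}\, X^T A X$, cancelling the centering. This is precisely why the sum in the statement excludes $I \setminus J = [d]$.

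Next I would apply Minkowski's inequality in $L_p$ to reduce the problem to bounding the $L_p$ norm of each of the remaining $3^d - 1$ terms separately. For each such term, every coordinate $l \in I^c$ still carries a constraint $\bd{i}_l \neq \bd{i}'_l$. I would then apply the classical de la Pe\~na--Montgomery-Smith decoupling inequality \emph{one coordinate at a time}, conditioning on all other random vectors. For a single coordinate this inequality says
\[
\Bigl\| \sum_{\bd{i}_l \neq \bd{i}'_l} c_{\bd{i}_l, \bd{i}'_l}\, X^{(l)}_{\bd{i}_l} X^{(l)}_{\bd{i}'_l} \Bigr\|_{L_p} \leq 4 \Bigl\| \sum_{\bd{i}_l, \bd{i}'_l} c_{\bd{i}_l, \bd{i}'_l}\, X^{(l)}_{\bd{i}_l} \bar{X}^{(l)}_{\bd{i}'_l} \Bigr\|_{L_p},
\]
applied with $c_{\bd{i}_l, \bd{i}'_l}$ depending on the other random vectors. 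Iterating this over the $|I^c|$ off-diagonal coordinates — using Fubini to swap the conditional decouplings, which is permissible by independence — replaces each strictly off-diagonal sum by an unconstrained decoupled sum with an independent copy, at the cost of a total multiplicative factor $4^{|I^c|} = 4^{d - |I|}$.

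To finish, I would relabel the diagonal indices for $l \in J$ as $\bd{i}_l$, the (integrated-out) diagonal indices for $l \in I \setminus J$ as $\bd{j}_l$ (these coincide on both sides of the pair, which is why they appear in both blocks of $(\bd{i} \dot\times \bd{j} \dot\times \bd{k}) \dot+ (\bd{i} \dot\times \bd{j} \dot\times \bd{k}')$), and the decoupled indices for $l \in I^c$ as $\bd{k}_l$ and $\bd{k}'_l$. Matching these identifications against the coefficient array $A$ gives exactly the right-hand side of the claim, and the triangle inequality summing over $(I,J)$ completes the proof.

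The main obstacle is the careful bookkeeping needed to justify the iterated conditional application of decoupling: the coefficient $c_{\bd{i}_l, \bd{i}'_l}$ in the scalar decoupling inequality depends on the other random vectors, and one must verify that after conditioning the remaining variables still form a valid independent mean-zero family so the decoupling inequality applies. A secondary subtlety is ensuring that, once the $3^d$ expansion is performed, the index contractions produced by the ``integrated-out'' diagonals $l \in I \setminus J$ faithfully correspond to the array index pattern $(\bd{i} \dot\times \bd{j} \dot\times \bd{k}) \dot+ (\bd{i} \dot\times \bd{j} \dot\times \bd{k}')$ used in the theorem; this is a direct consequence of the definitions \eqref{eq:operator_dottimes} and \eqref{eq:operator_dotplus} but requires attention.
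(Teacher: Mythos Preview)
Your proposal is correct and follows essentially the same approach as the paper: the paper first splits the sum according to the set $I = \{l : \bd{i}_l = \bd{i}'_l\}$ and then applies a rearrangement identity (its Theorem~\ref{thm:rearrange_minus1}) to introduce the subset $J \subset I$ of centered squares, which is exactly your three-way decomposition of $X^{(l)}_{\bd{i}_l} X^{(l)}_{\bd{i}'_l}$ done in one shot; it then identifies the $I\setminus J = [d]$ term as the expectation, applies the triangle inequality, and iteratively applies the order-2 decoupling inequality (its Theorem~\ref{thm:decoupling_ord_2}) over each coordinate in $I^c$ conditionally on the others, picking up the factor $4^{|I^c|} = 4^{d-|I|}$. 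Your combined expansion is slightly more streamlined but otherwise identical in substance.
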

	
	\begin{rem}
		Consider the special case in Theorem \ref{thm:decoupling_total} of $X^{(1)}, \dots, X^{(d)}$ being Rademacher vectors, i.e., having independent entries that are $\pm 1$ with a probability of $\frac{1}{2}$ each. Then any squared entry is $1$ almost surely. This implies that the factor $\prod_{l \in J} \left[ (X^{(l)}_{\bd{i}_l})^2 - 1 \right]$ is $0$ unless $J = \emptyset$. So on the right hand side of the inequality in Theorem \ref{thm:decoupling_total}, only the terms with $J = \emptyset$ need to be considered.
	\end{rem}

	\section{Main proofs}
	
	\subsection{Preliminaries}
	
	The classical symmetrization theorem for normed spaces, such as Lemma 6.4.2 in \cite{vershynin_hdp}, can be extended to increasing convex functions of norms as the following result from \cite{decoupling_convex} shows.
	
	\begin{lem}[Special case of Lemma A1 in \cite{decoupling_convex}] \label{lem:symmetrization}
		Let $X_1, \dots, X_n$ be independent, mean $0$ real-valued random variables and $p \geq 1$. Let $\xi_1, \dots, \xi_n$ be independent Rademacher variables that are independent of $X_1, \dots, X_n$. Then
		\[
		\frac{1}{2^p} \mathbb{E}\left| \sum_{k = 1}^n \xi_k X_k \right|^p \leq \mathbb{E}\left| \sum_{k = 1}^n X_k \right|^p \leq 2^p \mathbb{E}\left| \sum_{k = 1}^n \xi_k X_k \right|^p
		\]
	\end{lem}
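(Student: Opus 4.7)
The plan is a classical symmetrization by independent copies. Let $X_1', \dots, X_n'$ be an independent copy of the sequence $X_1, \dots, X_n$, jointly independent also of the Rademacher variables $\xi_1, \dots, \xi_n$. Since each $X_k - X_k'$ is symmetric about $0$ and the $\xi_k$ are independent of the $X$'s and $X'$'s, the random vectors $(X_k - X_k')_{k=1}^n$ and $(\xi_k (X_k - X_k'))_{k=1}^n$ have the same joint distribution; this symmetry identity is the engine of the whole argument.

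For the upper bound, I use the mean-zero assumption to write $\mathbb{E}\bigl|\sum_k X_k\bigr|^p = \mathbb{E}\bigl|\sum_k X_k - \mathbb{E}_{X'}\sum_k X_k'\bigr|^p$, then pull the conditional expectation outside the convex function $|\cdot|^p$ by Jensen's inequality to obtain
\[
\mathbb{E}\Bigl|\sum_k X_k\Bigr|^p \leq \mathbb{E}\Bigl|\sum_k (X_k - X_k')\Bigr|^p = \mathbb{E}\Bigl|\sum_k \xi_k(X_k - X_k')\Bigr|^p,
\]
where the last equality is the symmetry identity above. An application of the triangle inequality in $L^p$, together with the observation that $\xi_k X_k'$ and $\xi_k X_k$ have the same $L^p$ norm, bounds this by $2^p\, \mathbb{E}\bigl|\sum_k \xi_k X_k\bigr|^p$.

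The lower bound is obtained by running the same scheme with the roles of $\sum_k X_k$ and $\sum_k \xi_k X_k$ swapped. Conditioning on the $\xi_k$ and using $\mathbb{E}[\xi_k X_k'] = 0$, Jensen's inequality yields $\mathbb{E}\bigl|\sum_k \xi_k X_k\bigr|^p \leq \mathbb{E}\bigl|\sum_k \xi_k(X_k - X_k')\bigr|^p$; the symmetry identity rewrites the right-hand side as $\mathbb{E}\bigl|\sum_k (X_k - X_k')\bigr|^p$; and the $L^p$ triangle inequality bounds it by $2^p\,\mathbb{E}\bigl|\sum_k X_k\bigr|^p$. Dividing through by $2^p$ gives the claimed lower bound.

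No step presents a serious obstacle since each ingredient (symmetrization, Jensen, triangle inequality) is entirely standard. The only care needed is to invoke the mean-zero hypothesis in both directions, and to sequence the Jensen, symmetry, and triangle-inequality steps so that the constant $2^p$ appears exactly once per direction, matching the claimed inequality.
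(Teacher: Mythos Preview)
Your argument is correct and is precisely the classical symmetrization-by-independent-copies proof: Jensen to pass to the symmetrized difference, the distributional identity $(X_k - X_k')_k \stackrel{d}{=} (\xi_k(X_k - X_k'))_k$, and the $L^p$ triangle inequality to split off the copy. Both directions are handled cleanly and the constant $2^p$ falls out exactly as stated.

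There is nothing to compare against here: the paper does not supply its own proof of this lemma but simply imports it as a special case of Lemma~A1 in \cite{decoupling_convex} (and points to Lemma~6.4.2 in \cite{vershynin_hdp} for the normed-space version). Your write-up is the standard proof one would find in those references.
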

	
	The decoupling theorem for quadratic forms is a well-known result in probability theory and can be found together with its proof for example as Theorem 8.11 in \cite{foucart_rauhut}. A sufficient version for our purpose can be written as follows:
	
	\begin{thm} \label{thm:decoupling_ord_2}
		Let $A \in \mathbb{R}^{n \times n}$ be a matrix, $X \in \mathbb{R}^n$ a vector with independent mean $0$ entries, and $\bar{X}$ and independent copy of $X$. Let $F: \mathbb{R} \rightarrow \mathbb{R}$ be a convex function. Then
		\begin{align*}
			\mathbb{E} F\left(\sum_{\substack{j, k = 1 \\ j \neq k}}^n A_{j k} X_j X_k \right) \leq 
			\mathbb{E} F\left(4 \sum_{j, k = 1}^n A_{j k} X_j \bar{X}_k \right) 
		\end{align*}
	\end{thm}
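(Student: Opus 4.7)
The plan is to proceed via a two-stage randomized selector argument, applying Jensen's inequality at each stage. First, introduce independent Bernoulli selectors $\delta_1, \ldots, \delta_n$ with $\mathbb{P}(\delta_i = 1) = 1/2$, independent of $X$, and set $I = \{i : \delta_i = 1\}$. The critical elementary calculation is $\mathbb{E}_\delta[\delta_j (1 - \delta_k)] = 1/4$ for all $j \neq k$, giving the identity
$$\sum_{j \neq k} A_{jk} X_j X_k \;=\; 4\, \mathbb{E}_\delta \sum_{j \neq k} \delta_j (1 - \delta_k)\, A_{jk} X_j X_k \;=\; 4\, \mathbb{E}_\delta \sum_{j \in I,\, k \in I^c} A_{jk} X_j X_k.$$
Applying Jensen's inequality for the convex function $F$ and then Fubini yields
$$\mathbb{E}_X F\left(\sum_{j \neq k} A_{jk} X_j X_k\right) \;\leq\; \mathbb{E}_\delta\, \mathbb{E}_X F\left(4 \sum_{j \in I,\, k \in I^c} A_{jk} X_j X_k\right).$$
For each fixed $\delta$, the collections $\{X_j : j \in I\}$ and $\{X_k : k \in I^c\}$ are independent, so replacing $X_k$ by $\bar{X}_k$ for $k \in I^c$ preserves the joint distribution of the summands. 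This rewrites the inner expectation as $\mathbb{E}_{X,\bar{X}} F(4 \sum_{j \in I,\, k \in I^c} A_{jk} X_j \bar{X}_k)$, i.e.\ a genuine decoupled chaos, but only over the partial index range $I \times I^c$.

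The remaining step extends the partial sum back to the full double sum. Let $\mathcal{G}$ be the $\sigma$-algebra generated by $\{X_j : j \in I\} \cup \{\bar{X}_k : k \in I^c\}$, and set $W := 4 \sum_{j, k = 1}^n A_{jk} X_j \bar{X}_k$. Splitting the indices into the four blocks $I \times I^c$, $I \times I$, $I^c \times I^c$, $I^c \times I$, every block outside $I \times I^c$ contains at least one factor $X_j$ with $j \in I^c$ or $\bar{X}_k$ with $k \in I$; each such factor is mean zero and independent of $\mathcal{G}$. Hence $\mathbb{E}[W \mid \mathcal{G}] = 4 \sum_{j \in I,\, k \in I^c} A_{jk} X_j \bar{X}_k$, and the conditional Jensen inequality gives
$$F\left(4 \sum_{j \in I,\, k \in I^c} A_{jk} X_j \bar{X}_k\right) \;=\; F\bigl(\mathbb{E}[W \mid \mathcal{G}]\bigr) \;\leq\; \mathbb{E}\bigl[F(W) \,\big|\, \mathcal{G}\bigr].$$
Taking total expectation produces the bound $\mathbb{E}_{X,\bar{X}} F(W)$, which does not depend on $\delta$, so the outer $\mathbb{E}_\delta$ becomes vacuous and the theorem follows.

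The main technical point is the second Jensen application: one must choose the conditioning $\sigma$-algebra precisely so that exactly the ``cross'' block $I \times I^c$ survives the conditional expectation while the other three blocks vanish by the mean-zero and independence properties. The randomized selector trick is essential here because it splits the index set into two halves whose corresponding variables are independent, which is what makes the second Jensen step work without any symmetry or diagonal-free assumption on $A$.
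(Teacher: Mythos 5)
Your proof is correct, and it is essentially the standard Bernoulli-selector argument that the paper itself does not reproduce but instead cites (Theorem 8.11 in Foucart--Rauhut): the identity $\mathbb{E}_\delta[\delta_j(1-\delta_k)]=1/4$ for $j\neq k$, a first Jensen/Fubini pass, replacement of $\{X_k : k\in I^c\}$ by the independent copy, and a second conditional Jensen step that recovers the full double sum via $\mathbb{E}[W\mid\mathcal{G}]$. Your handling of the three ``unwanted'' blocks is sound (in the $I^c\times I$ block both factors are centered, $\mathcal{G}$-independent, and mutually independent, so the conditional expectation is again zero), so there is no gap.
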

	
	Also the following elementary result will be used.
	
	\begin{lem} \label{lem:m1_set_powers}
		Let $T$ be a finite set. Then
		\[
		\sum_{S \subset T} (-1)^{|S|} = 
		\begin{cases}
			1 & \text{if } T = \emptyset \\
			0 & \text{otherwise}.
		\end{cases}
		\]
	\end{lem}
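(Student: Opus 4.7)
The plan is to give two quick proofs: one by the binomial theorem, and one by an involution argument, since the statement is elementary and both arguments are essentially one line each.

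For the binomial approach, I would group the subsets of $T$ by their cardinality. Since the number of subsets of $T$ with exactly $k$ elements is $\binom{|T|}{k}$, we can rewrite
\[
\sum_{S \subset T} (-1)^{|S|} = \sum_{k=0}^{|T|} \binom{|T|}{k} (-1)^k = (1 + (-1))^{|T|} = 0^{|T|},
\]
where the middle equality is the binomial theorem applied to $(1 + x)^{|T|}$ at $x = -1$. With the convention $0^0 = 1$, the right-hand side equals $1$ if $T = \emptyset$ and $0$ otherwise, which is exactly the stated case distinction. The $T = \emptyset$ case can also be verified directly: the only subset is $S = \emptyset$, contributing $(-1)^0 = 1$.

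For a purely combinatorial proof of the nontrivial case $T \neq \emptyset$, I would fix any element $x \in T$ and consider the involution $\varphi$ on the power set of $T$ defined by $\varphi(S) = S \triangle \{x\}$ (symmetric difference). Then $\varphi$ is its own inverse, has no fixed points, and satisfies $|\varphi(S)| = |S| \pm 1$, so $(-1)^{|\varphi(S)|} = -(-1)^{|S|}$. Pairing each subset $S$ with $\varphi(S)$ partitions the power set into pairs whose contributions cancel, giving $\sum_{S \subset T} (-1)^{|S|} = 0$. Either proof suffices; the binomial version is shorter and more direct, and I expect no real obstacle here since this is a standard identity used repeatedly in inclusion-exclusion arguments.
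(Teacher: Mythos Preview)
Your proposal is correct. Your second argument (the involution via symmetric difference with a fixed element) is exactly the paper's proof: the paper fixes $a \in T$, splits the sum into subsets containing $a$ and those not containing $a$, and observes the two halves cancel. Your first argument via the binomial theorem is a genuinely different route---more algebraic, and arguably even shorter since it handles both cases at once via $0^{|T|}$---whereas the paper's/your involution argument is purely combinatorial and makes the cancellation explicit. Either is perfectly adequate for such an elementary lemma.
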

	
	\begin{proof}
		For $T = \emptyset$, the statement is clear. Otherwise fix one element $a \in T$ and then
		\begin{align*}
			\sum_{S \subset T} (-1)^{|S|} &= 	\sum_{\substack{S \subset T \\ \text{s.t. } a \in S}} (-1)^{|S|} + \sum_{\substack{S \subset T \\ \text{s.t. } a \notin S}} (-1)^{|S|}
			= 	\sum_{S \subset T \backslash \{a\}} (-1)^{|S| + 1} +	\sum_{S \subset T \backslash \{a\}} (-1)^{|S|} \\
			&= 	\sum_{S \subset T \backslash \{a\}} (-1)^{|S|} \left[ (-1) + 1 \right] = 0.
		\end{align*}
	\end{proof}

	For the norms in Definition \ref{def:tensor_norm}, we need the following property about restricting arrays to some diagonal entries. This can be obtained directly from a repeated application of Lemma 5.2 in \cite{adamczak2015concentration} (where $K = \{l, l + d\}$ for each $l \in I$). Here again, we use the notation of $\dot\times$ and $\dot+$ from \eqref{eq:operator_dottimes} and \eqref{eq:operator_dotplus}.
	\begin{lem} \label{lem:diagonal_array_norm}
		Let $\bd{A} \in \mathbb{R}^{\bd{n}^{\times 2}}$, $I \subset [d]$ and define $\bd{A}^{[I]} \in \R^{\bd{n}^{\times 2}}$ by
		\begin{align*}
			A^{[I]}_{\bd{i} \dot+ \bd{i}'} := \begin{cases}
				A_{\bd{i} \dot+ \bd{i}'} & \text{if } \forall l \in I: \bd{i}_l = \bd{i}'_l \\
				0 & \text{otherwise}.
			\end{cases}
		\end{align*}
		for all $\bd{i}, \bd{i}' \in \Jn$.
		Then for any partition $I_1, \dots, I_\kappa$ of $[2d]$, we have
		\begin{align*}
			\|\bd{A}^{[I]}\|_{I_1, \dots, I_\kappa} \leq \|\bd{A}\|_{I_1, \dots, I_\kappa}.
		\end{align*}
	\end{lem}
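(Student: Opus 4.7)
My plan is to prove the bound by induction on $|I|$, peeling off one axis-pair at a time and invoking Lemma 5.2 of \cite{adamczak2015concentration} at each step, which is precisely the reduction that the statement itself indicates. The base case $I = \emptyset$ is immediate because $\bd{A}^{[\emptyset]} = \bd{A}$, so equality holds; the entire content is in the inductive step.

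For the inductive step, I would fix any $l_0 \in I$, set $\tilde{I} := I \setminus \{l_0\}$, and first observe the identity
\[
\bd{A}^{[I]} = \bigl(\bd{A}^{[\tilde{I}]}\bigr)^{[\{l_0\}]},
\]
which holds because imposing the coincidence constraints ``$\bd{i}_l = \bd{i}'_l$ for all $l \in I$'' can be done one $l$ at a time and the order in which we impose them is immaterial. Then I would apply Lemma 5.2 of \cite{adamczak2015concentration} with the two-element axis set $K = \{l_0, l_0 + d\}$ to the array $\bd{A}^{[\tilde{I}]}$, obtaining
\[
\bigl\|(\bd{A}^{[\tilde{I}]})^{[\{l_0\}]}\bigr\|_{I_1,\dots,I_\kappa} \leq \bigl\|\bd{A}^{[\tilde{I}]}\bigr\|_{I_1,\dots,I_\kappa},
\]
and chain this with the inductive hypothesis $\|\bd{A}^{[\tilde{I}]}\|_{I_1,\dots,I_\kappa} \leq \|\bd{A}\|_{I_1,\dots,I_\kappa}$ to close the step. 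Iterating $|I|$ times yields the claim.

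The only delicate point I anticipate is verifying that Lemma 5.2 of \cite{adamczak2015concentration} applies cleanly to the pair $\{l_0, l_0 + d\}$ regardless of how these two axes are distributed among the blocks of the partition $I_1, \dots, I_\kappa$ of $[2d]$: the case where they sit in a common block can be handled simply by restricting the corresponding factor $\alpha^{(j)}$ to its diagonal (whose $\ell_2$ norm does not increase), while the case where they lie in different blocks is the substantive one and is handled by the Cauchy--Schwarz-type manipulation inside the reference. Since the cited lemma is stated for an arbitrary axis-subset and an arbitrary partition, this is bookkeeping rather than a real obstacle, and no new probabilistic content enters the argument beyond what \cite{adamczak2015concentration} already supplies.
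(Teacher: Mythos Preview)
Your proposal is correct and takes essentially the same approach as the paper: the paper's proof is precisely the one-line instruction to apply Lemma~5.2 of \cite{adamczak2015concentration} repeatedly with $K=\{l,l+d\}$ for each $l\in I$, and your induction on $|I|$ with the identity $\bd{A}^{[I]} = (\bd{A}^{[\tilde I]})^{[\{l_0\}]}$ is exactly this repeated application spelled out. Your discussion of the two cases (whether $l_0$ and $l_0+d$ lie in the same or different blocks of the partition) even matches the structure of the paper's more detailed but commented-out argument.
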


	For comparisons between functions of subgaussian and of Gaussian variables, we will use the concept of strong stochastic domination. See, e.g., \cite{random_series} for the following definition and further explanations.
	
	\begin{definition}[Definition 3.2.1 in \cite{random_series}] \label{def:stoch_dominance}
		Let $X, Y \in \R$ be random variables. We say that $X$ is $(\kappa, \lambda)$-strongly dominated by $Y$ ($X \prec_{(\kappa, \lambda)} Y$) if for every $t > 0$,
		\[
		\mathbb{P}(|X| > t) \leq \kappa \mathbb{P}(\lambda |Y| > t).
		\]
	\end{definition}
	
	It can be shown that linear combinations of independent, stochastically dominated random variables are again stochastically dominated which in turn implies the following statement about expectations of convex functions of these linear combinations.
	
	\begin{thm}[Corollary 3.2.1 in \cite{random_series}] \label{thm:dominance_vector}
		Let $X_1, \dots, X_n, Y_1, \dots, Y_n \in \R$ be independent symmetric random variables and $a_1, \dots, a_n \in \R$ fixed coefficients such that $X_i \prec_{(\kappa, \lambda)} Y_i$. Then for any nondecreasing $\varphi: \R^{+} \rightarrow \R^{+}$,
		\[
		\mathbb{E} \varphi \left( \left| \sum_{i = 1}^n a_i X_i \right| \right)
		\leq 2 \lceil \kappa \rceil \mathbb{E} \varphi \left(\lceil \kappa \rceil \lambda \left| \sum_{i = 1}^n a_i Y_i \right| \right).
		\]
	\end{thm}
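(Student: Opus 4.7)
My plan is to deduce the bound in two stages: (i) establish a one-variable comparison $\mathbb{E}\varphi(|U|)\le \kappa'\,\mathbb{E}\varphi(\lambda'|V|)$ for $U \prec_{(\kappa',\lambda')} V$ via the layer-cake identity, and (ii) tensorize, i.e., show that $\sum_i a_i X_i$ is itself strongly dominated by $\sum_i a_i Y_i$ with inflated constants, so that applying (i) to the two sums yields the claim.

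For the one-variable version, I would prove: if $U \prec_{(\kappa',\lambda')} V$ and $\varphi:\mathbb{R}^+\to\mathbb{R}^+$ is nondecreasing, then $\mathbb{E}\varphi(|U|)\le \kappa'\,\mathbb{E}\varphi(\lambda'|V|)$. Starting from the layer-cake identity
$$\mathbb{E}\varphi(|U|)-\varphi(0)=\int_{(0,\infty)}\mathbb{P}(|U|>t)\,d\varphi(t),$$
Definition~\ref{def:stoch_dominance} gives an upper bound of $\kappa'\int_{(0,\infty)}\mathbb{P}(\lambda'|V|>t)\,d\varphi(t)=\kappa'(\mathbb{E}\varphi(\lambda'|V|)-\varphi(0))$, and since $\kappa'\ge 1$ and $\varphi\ge 0$ the resulting $-(\kappa'-1)\varphi(0)$ term is nonpositive and can be dropped.

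The central step is to show the tensorized domination
$$\sum_{i=1}^n a_i X_i\;\prec_{(2\lceil\kappa\rceil,\;\lceil\kappa\rceil\lambda)}\;\sum_{i=1}^n a_i Y_i,$$
since applying step (i) with $\kappa'=2\lceil\kappa\rceil$ and $\lambda'=\lceil\kappa\rceil\lambda$ then gives exactly the stated inequality. To prove this, write $\kappa^*:=\lceil\kappa\rceil$ and introduce for each $i$ independent copies $Y_i^{(1)},\dots,Y_i^{(\kappa^*)}$ of $Y_i$ together with an independent uniform index $\eta_i\in\{1,\dots,\kappa^*\}$; note that $Y_i^{(\eta_i)}\stackrel{d}{=}Y_i$, and $(Y_i^{(\eta_i)})_{i=1}^n\stackrel{d}{=}(Y_i)_{i=1}^n$ as independent symmetric tuples. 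The inequality $\mathbb{P}(|X_i|>t)\le \kappa^*\mathbb{P}(\lambda|Y_i|>t)$ rearranges to a CDF inequality that, after re-randomizing over $\eta_i$, permits a quantile coupling producing $\tilde X_i\stackrel{d}{=}X_i$ with $|\tilde X_i|\le \kappa^*\lambda\,|Y_i^{(\eta_i)}|$ almost surely; the symmetry of $X_i$ lets us take the sign of $\tilde X_i$ to be an independent Rademacher $\varepsilon_i$, and similarly $a_iY_i\stackrel{d}{=}\varepsilon_i a_i|Y_i|$. Conditioning on the magnitudes and using L\'evy's inequality for sums of independent symmetric variables then upgrades the pointwise coupling to a tail bound on the signed sums,
$$\mathbb{P}\!\left(\left|\sum_i a_i \tilde X_i\right|>t\right)\le 2\,\mathbb{P}\!\left(\kappa^*\lambda\left|\sum_i a_i Y_i\right|>t\right),$$
which is exactly the tensorized domination with constants $(2\kappa^*,\kappa^*\lambda)$.

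\textbf{Main obstacle.} The genuine difficulty is this tensorization step: strong stochastic domination of marginals does not automatically yield domination of sums, and the argument has to build a joint coupling compatible with both independence and symmetry. The inflation factor $\lceil\kappa\rceil$ is the price of randomizing the index $\eta_i$ so that a factor-$\kappa^*$ tail bound can be upgraded to an almost-sure coupling, while the factor $2$ out front is the L\'evy symmetrization toll for converting a one-sided magnitude bound into a two-sided tail estimate for the signed sums. Once the tensorization is in hand, the layer-cake computation in step (i) finishes the proof essentially mechanically.
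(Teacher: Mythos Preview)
The paper does not prove this theorem; it is quoted from \cite{random_series} as a known result, so there is no proof in the paper to compare against. Your two-stage strategy---layer-cake for a single variable, then tensorization of the domination to the sums---is the right architecture, and step~(i) is correct as written.

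The genuine gap is in the coupling you assert for step~(ii). You claim that $\mathbb{P}(|X_i|>t)\le \kappa^*\,\mathbb{P}(\lambda|Y_i|>t)$, together with an independent uniform index $\eta_i$ over $\kappa^*$ i.i.d.\ copies, yields a coupling with $|\tilde X_i|\le \kappa^*\lambda\,|Y_i^{(\eta_i)}|$ almost surely. But since $\eta_i$ is independent of the copies, $Y_i^{(\eta_i)}\stackrel{d}{=}Y_i$, and any such almost-sure bound forces the tail inequality $\mathbb{P}(|X_i|>t)\le \mathbb{P}(\kappa^*\lambda|Y_i|>t)$ for every $t$---which does \emph{not} follow from $(\kappa^*,\lambda)$-domination. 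Concretely, take $\lambda=1$, $\kappa^*=2$, let $|X_i|=1$ a.s., and let $|Y_i|\in\{0,1\}$ with $\mathbb{P}(|Y_i|=1)=\tfrac12$. Then for $t\in(0,1)$ one has $\mathbb{P}(|X_i|>t)=1= 2\cdot\tfrac12=\kappa^*\,\mathbb{P}(|Y_i|>t)$, so $X_i\prec_{(2,1)}Y_i$; yet $\mathbb{P}(2|Y_i|>\tfrac12)=\mathbb{P}(|Y_i|=1)=\tfrac12<1$, so no random variable distributed as $|X_i|$ can be a.s.\ bounded by $\kappa^*\lambda|Y_i^{(\eta_i)}|$. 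The same example also rules out $|\tilde X_i|\le \lambda\max_j|Y_i^{(j)}|$ and $|\tilde X_i|\le \lambda\sum_j|Y_i^{(j)}|$ (both events have probability $\tfrac34<1$ at $t=\tfrac12$), so no per-coordinate almost-sure coupling of $|\tilde X_i|$ against a simple function of the $Y_i^{(j)}$'s is available here.

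The tensorized domination $\sum_i a_i X_i \prec_{(2\kappa^*,\kappa^*\lambda)}\sum_i a_i Y_i$ that you are aiming for is correct, but the passage from marginal domination to domination of the sums requires a different mechanism than the pointwise coupling you sketch; the argument in \cite{random_series} handles this step by other means, and you would need to either consult that source or find an alternative route (e.g., working with the sums and their partial maxima directly rather than coordinate-wise).
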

	
	Statements similar to the following lemma have been used in multiple works to establish a relation between $\left| \|A x\|_2 - a \right|$ and $\left| \|A x\|_2^2 - a^2 \right|$, for example in the proof of Lemma 5.36 in \cite{vershynin_2012}. For completeness, we state it as a separate result with its proof here.
	
	\begin{lem} \label{lem:a2b2_ab_comparison}
		For real numbers $a, b \geq 0$, $b \neq 0$, it holds that
		\[
		\frac{1}{3} \min\left\{ \frac{|a^2 - b^2|}{b}, \sqrt{|a^2 - b^2|} \right\} \leq
		|a - b| \leq \min\left\{\frac{|a^2 - b^2|}{b}, \sqrt{|a^2 - b^2|} \right\}.
		\]
	\end{lem}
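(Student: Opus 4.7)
The plan rests on the factorization $a^2-b^2=(a-b)(a+b)$, which gives the clean identity $|a-b|=\frac{|a^2-b^2|}{a+b}$ (valid whenever $a+b\neq 0$, which holds here since $b\neq 0$).

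For the upper bound, I would deduce both inequalities directly from this identity. Since $a\geq 0$ we have $a+b\geq b>0$, so dividing by the smaller quantity yields $|a-b|\leq\frac{|a^2-b^2|}{b}$. For the square-root bound, I would multiply the identity by $|a-b|$ to get $(a-b)^2=\frac{(a-b)^2}{a+b}(a+b)\leq|a-b|\cdot(a+b)=|a^2-b^2|$, where I used $|a-b|\leq a+b$ (which follows from $a,b\geq 0$); taking square roots gives $|a-b|\leq\sqrt{|a^2-b^2|}$.

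For the lower bound, I would split into two cases according to whether $a+b$ is comparable to $b$ or to $a-b$. In the first case, $a\leq 2b$, so $a+b\leq 3b$ and the identity immediately gives $|a-b|\geq\frac{|a^2-b^2|}{3b}$. In the complementary case $a>2b$, one has $a-b>a/2$ and hence $a+b<3a/2<3(a-b)$; multiplying by $|a-b|$ yields $(a-b)^2>\frac{(a-b)(a+b)}{3}=\frac{|a^2-b^2|}{3}$, so $|a-b|>\frac{1}{\sqrt{3}}\sqrt{|a^2-b^2|}\geq\frac{1}{3}\sqrt{|a^2-b^2|}$. In either case the appropriate term of the minimum is bounded by $3|a-b|$, which proves the lower bound.

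There is no real obstacle here; the entire argument is the algebraic identity $a^2-b^2=(a-b)(a+b)$ together with the case distinction $a\leq 2b$ versus $a>2b$ that tells us which of the two expressions inside the minimum is the active one. The only thing to be careful about is ensuring the bounds are stated for $b$ rather than $a+b$ (which is why the constant $\frac{1}{3}$ appears instead of $1$ on the lower side).
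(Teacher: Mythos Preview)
Your proof is correct and follows essentially the same approach as the paper: both use the identity $|a-b|=\frac{|a^2-b^2|}{a+b}$ for the upper bound and the case split $a\leq 2b$ versus $a>2b$ for the lower bound. The only cosmetic difference is in the second case of the lower bound, where the paper writes $a+b=|a-b|+2b\leq 3|a-b|$ while you argue via $a-b>a/2$ to reach the equivalent inequality $a+b<3(a-b)$.
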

	
	\begin{proof}		
		We obtain
		\[
		|a - b| = \frac{|a^2 - b^2|}{|a + b|} \leq \frac{|a^2 - b^2|}{b},
		\]
		and since $a, b \geq 0$, i.e., $|a - b| \leq |a| + |b| = |a + b|$, it follows that $|a - b|^2 \leq |a - b| |a + b| = |a^2 - b^2|$, proving the second inequality.
		
		For the first inequality, first assume the case $a \leq 2 b$. Then $a + b \leq 3 b$ such that
		\[
		\frac{1}{3} \frac{|a^2 - b^2|}{b} \leq \frac{|a^2 - b^2|}{a + b} = |a - b|.
		\]
		In the case that $a \geq 2 b$, i.e., $a - b \geq b \geq 0$, we obtain
		\begin{align*}
			\frac{1}{3} \sqrt{|a^2 - b^2|} & \leq \frac{1}{3} \sqrt{|a + b| |a - b|} \leq
			\frac{1}{3} \sqrt{(|a - b| + 2 b) |a - b|} \\
			& \leq \frac{1}{3} \sqrt{(|a - b| + 2 |a - b|) |a - b|} = \frac{1}{\sqrt{3}} |a - b| \leq |a - b|.
		\end{align*}
	\end{proof}
	
	Relations between moments and tail bounds have also been well-known in the field. For an overview see, e.g., Chapter 7.3 in \cite{foucart_rauhut}. In this spirit, we state and prove the following small tool for the case of mixed tails which we encounter in this work.
	
	\begin{lem}[Moments and tail bounds] \label{lem:moment_tail_bound}
		Let $T$ be a finite set and $X$ an $\R$ valued random variable such that for all $p \geq p_0 \geq 0$,
		\[
		\|X\|_{L_p} \leq \sum_{k = 1}^{d} \min_{l \in T} p^{e_{k, l}} \gamma_{k, l}
		\]
		for values $\gamma_{k, l} > 0$.
		
		Then for all $t > 0$,
		\[
		\mathbb{P}(|X| > t) \leq e^{p_0} \exp\left( - \min_{k \in [d]} \max_{l \in T} \left( \frac{t}{e d \gamma_{k, l}} \right)^\frac{1}{e_{k, l}} \right). \label{eq:concentration}
		\]
	\end{lem}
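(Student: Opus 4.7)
The plan is the standard Markov--Chebyshev argument that converts an $L_p$ bound into a tail estimate, but adapted to the two nested extrema in the assumption. Write
\[
p^* := \min_{k \in [d]} \max_{l \in T} \left( \frac{t}{e d \gamma_{k, l}} \right)^{1 / e_{k, l}},
\]
which is exactly the quantity that shows up in the exponent of the target bound. The goal is to use Markov's inequality at $p = \max(p^*, p_0)$, show that the resulting estimate is bounded by $e^{p_0} \exp(-p^*)$, and then handle the two regimes $p^* \geq p_0$ and $p^* < p_0$ separately.

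First I would verify the key algebraic identity: if $p \leq p^*$, then the assumed moment bound satisfies $\|X\|_{L_p} \leq t/e$. Indeed, $p \leq p^*$ means that for every $k \in [d]$ there is an index $l = l(k) \in T$ with $p \leq (t / (e d \gamma_{k, l}))^{1 / e_{k, l}}$, i.e.\ $p^{e_{k, l}} \gamma_{k, l} \leq t / (e d)$. Taking the minimum over $l$ for each $k$ and summing over $k \in [d]$ yields
\[
\|X\|_{L_p} \leq \sum_{k = 1}^d \min_{l \in T} p^{e_{k, l}} \gamma_{k, l} \leq d \cdot \frac{t}{e d} = \frac{t}{e}.
\]

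Now consider the case $p^* \geq p_0$. Apply Markov to $|X|^{p^*}$ and use the previous step with $p = p^*$:
\[
\mathbb{P}(|X| > t) \leq \frac{\|X\|_{L_{p^*}}^{p^*}}{t^{p^*}} \leq \left( \frac{1}{e} \right)^{p^*} = \exp(-p^*),
\]
which is stronger than the claimed bound since $e^{p_0} \geq 1$. In the remaining case $p^* < p_0$, apply the same argument at $p = p_0$ (which is admissible because the hypothesis $\|X\|_{L_p} \leq \sum_k \min_l p^{e_{k,l}} \gamma_{k,l}$ is only assumed for $p \geq p_0$, and $p_0 \geq p^*$ only makes the above sufficient condition easier to satisfy if we replace $p^*$ by $p_0$ everywhere). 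Alternatively, and more cleanly, use the trivial bound $\mathbb{P}(|X| > t) \leq 1 \leq e^{p_0 - p^*} = e^{p_0} \exp(-p^*)$, since $p_0 > p^*$ makes the exponent nonnegative. Combining both cases gives the claim.

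No step looks like a serious obstacle; the only place to be a little careful is the interplay of $\min_k$ and $\max_l$ in the definition of $p^*$. The direction is that for each $k$ we only need \emph{some} $l$ to be small (hence $\max_l$ picks the best allowed $p$), whereas we need the inequality for \emph{every} $k$ simultaneously (hence $\min_k$). Once this is set up correctly, the rest is just Markov's inequality and a factor-of-$d$ loss absorbed into the $e d$ denominator inside the exponent.
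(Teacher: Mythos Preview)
Your proposal is correct and follows essentially the same route as the paper: define the optimal exponent $p^* = \min_k \max_l (t/(ed\gamma_{k,l}))^{1/e_{k,l}}$, verify that at $p = p^*$ the moment hypothesis forces $\|X\|_{L_{p^*}} \leq t/e$, apply Markov's inequality when $p^* \geq p_0$, and use the trivial bound $\mathbb{P}(|X|>t) \leq 1 \leq e^{p_0 - p^*}$ when $p^* < p_0$. One small point: your parenthetical suggestion to ``apply the same argument at $p = p_0$'' in the case $p^* < p_0$ does not actually work, since the inequality $\|X\|_{L_p} \leq t/e$ was only established for $p \leq p^*$, not for $p_0 > p^*$; but you immediately supply the correct trivial-bound alternative, so the overall argument stands.
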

	
	\begin{proof}
		Fix any $u > 0$. For any $k \in [d]$, define $l'(k) := \argmax_{l \in T} \left( \frac{u}{\gamma_{k, l}} \right)^{\frac{1}{e_{k, l}}}$, then choose $k' := \argmin_{k \in [d]}  \left( \frac{u}{\gamma_{k, l'(k)}} \right)^{ \frac{1}{e_{k, l'(k)}} }$, $p := \left( \frac{u}{\gamma_{k', l'(k')}} \right)^{\frac{1}{e_{k', l'(k')}}}$, such that $p = \min_{k \in [d]} \max_{l \in T} \left( \frac{u}{\gamma_{k, l}} \right)^{\frac{1}{e_{k, l}}}$.
		
		Applying Markov's inequality to $\mathbb{P}(|X| > e d u) \leq \mathbb{P}(|X|^p > (e d u)^p)$, we obtain that this is $\leq e^{p_0} e^{-p}$ in any case and then choose $u = t / (e d)$.

	\end{proof}

	\subsection{Proof of the upper bound}
	
	\subsubsection{Required tools}
	
	\begin{lem} \label{lem:gauss_comparison_linear}
		There is an absolute constant $C$ such that the following holds. Let $X \in \mathbb{R}^n$ be a mean $0$ subgaussian random vector with $\psi_2$ norm $\leq L$. Take a Gaussian vector $g \sim N(0, Id_n)$ and $a \in \mathbb{R}^n$. Then
		\[
		\mathbb{E} \left| \sum_{k = 1}^n a_k X_k \right|^p \leq (C L)^p \left| \sum_{k = 1}^n a_k g_k \right|^p.
		\]
	\end{lem}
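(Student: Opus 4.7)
The plan is to reduce the statement to a comparison of $L_p$ norms of two scalar random variables: the subgaussian linear combination $Y := \sum_k a_k X_k$ and the Gaussian linear combination $Z := \sum_k a_k g_k$. Since the conclusion is a comparison of $p$-th moments raised to the $p$-th power, it suffices to show that $\|Y\|_{L_p} \leq CL \|Z\|_{L_p}$ for some absolute constant $C$, and then raise both sides to the power $p$.

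For the upper bound on $\|Y\|_{L_p}$, I would use only the definition of the subgaussian norm of a random vector given in the paper. By definition, $\|\langle X, a\rangle\|_{\psi_2} \leq \|a\|_2 \cdot \|X\|_{\psi_2} \leq L \|a\|_2$, and then from $\|Y\|_{\psi_2} = \sup_{p \geq 1} \|Y\|_{L_p}/\sqrt{p}$ we immediately get $\|Y\|_{L_p} \leq \sqrt{p}\, L\, \|a\|_2$. Notably, this step needs no independence of the entries of $X$ — it is a direct consequence of the vector $\psi_2$ norm.

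For the lower bound on $\|Z\|_{L_p}$, observe that $Z \sim N(0, \|a\|_2^2)$, so $\|Z\|_{L_p} = \|a\|_2 \cdot \|Z_1\|_{L_p}$ for $Z_1 \sim N(0,1)$. It is standard that there exists an absolute constant $c > 0$ such that $\|Z_1\|_{L_p} \geq c\sqrt{p}$ for all $p \geq 1$ (this follows, for instance, from Stirling's formula applied to the explicit expression $\mathbb{E}|Z_1|^p = \frac{2^{p/2}}{\sqrt{\pi}}\Gamma(\tfrac{p+1}{2})$, together with monotonicity of $p \mapsto \|Z_1\|_{L_p}$ on $[2,\infty)$ and a direct check on $[1,2]$).

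Combining these two estimates gives $\|Y\|_{L_p} \leq \sqrt{p}\, L\, \|a\|_2 \leq (L/c)\, \|Z\|_{L_p}$. Raising to the $p$-th power yields $\mathbb{E}|Y|^p \leq (L/c)^p\, \mathbb{E}|Z|^p$, which is the claimed inequality with $C = 1/c$. There is no real obstacle here; the only minor care needed is in invoking the lower bound $\|Z_1\|_{L_p} \geq c\sqrt{p}$ uniformly over $p \geq 1$, but this is a standard and well-documented property of the normal distribution.
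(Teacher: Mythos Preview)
Your proposal is correct and follows essentially the same approach as the paper's proof: both arguments upper-bound $\|\langle a, X\rangle\|_{L_p}$ by $L\|a\|_2\sqrt{p}$ directly from the definition of the vector $\psi_2$ norm, and lower-bound $\|\langle a, g\rangle\|_{L_p}$ by $c\|a\|_2\sqrt{p}$ via the explicit formula $\mathbb{E}|g_1|^p = \frac{2^{p/2}}{\sqrt{\pi}}\Gamma(\frac{p+1}{2})$ together with Stirling's approximation. The only cosmetic difference is that the paper carries out the Stirling computation explicitly (obtaining the constant $\frac{e}{\sqrt{2}}C_1$) and works with $p$-th moments throughout rather than passing through $L_p$ norms.
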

	
	\begin{proof}
		By the assumption on $X$, $\sum_{k = 1}^n a_k X_k = \langle a, X \rangle$ is a mean $0$ subgaussian random variable with $\| \langle a, X \rangle\|_{\psi_2} \leq L \|a\|_2$, implying that for any $p \geq 1$,
		\[
		\mathbb{E} | \langle a, X \rangle |^p \leq (C_1 L \|a\|_2)^p p^\frac{p}{2}.
		\]
		
		On the other hand, $\langle a, g \rangle \sim N(0, \|a\|_2^2)$, so by the known absolute moments of the normal distribution and Stirling's approximation,
		\begin{align*}
			\mathbb{E} | \langle a, g \rangle |^p = & \|a\|_2^p \cdot \frac{2^\frac{p}{2}}{\sqrt{\pi}} \Gamma\left( \frac{p + 1}{2} \right)
			\geq
			\|a\|_2^p \frac{2^\frac{p}{2}}{\sqrt{\pi}} \sqrt{2 \pi} \left( \frac{p + 1}{2} \right)^\frac{p}{2} \exp(- \frac{p + 1}{2}) \\
			\geq &
			2^\frac{p}{2} \|a\|_2^p \sqrt\frac{2}{e} \left( \frac{p}{2 e} \right)^\frac{p}{2} \geq
			\sqrt\frac{2}{e} \left( \frac{1}{e} \right)^\frac{p}{2} \|a\|_2^p  p^\frac{p}{2}
			\geq
			\left( \frac{2}{e^2} \right)^\frac{p}{2} \|a\|_2^p  p^\frac{p}{2},
		\end{align*}
		
		implying that $\mathbb{E}|\langle a, X \rangle |^p \leq \left(\frac{C_1 e}{\sqrt{2}} L \right)^p \mathbb{E} |\langle a, g \rangle|^p$.
	\end{proof}
	
	In order to control arbitrary chaoses, we will derive a similar result as Lemma \ref{lem:gauss_comparison_linear} for squared subgaussian and Gaussian variables. To achieve this, we make use of stochastic domination. The following theorem states that this can be used to compare squared subgaussian and Gaussian variables.
	
	\begin{lem} \label{lem:subgauss_dominance}
		There exist absolute constants $\kappa, \lambda > 0$ such that the following holds. Let $X$ be a subgaussian random variable with $\mathbb{E} X^2 = 1$ and $\|X\|_{\psi_2} \leq L$, $L \geq 1$ and $g \sim N(0, 1)$. Let $\xi,\,\xi' \in \{\pm 1\}$ be Rademacher variables that are independent of $X$ and $g$. Then $\xi (X^2 - 1) \prec_{(\kappa, \lambda L^2)} \xi' (g^2 - 1)$ in the sense of Definition \ref{def:stoch_dominance}.
	\end{lem}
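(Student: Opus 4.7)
The plan is to reduce the claim to a direct tail comparison: since $|\xi(X^2-1)| = |X^2-1|$ and $|\xi'(g^2-1)| = |g^2-1|$ almost surely, Definition~\ref{def:stoch_dominance} requires us to exhibit absolute constants $\kappa, \lambda > 0$ such that
\[
\mathbb{P}(|X^2 - 1| > t) \leq \kappa \, \mathbb{P}(|g^2 - 1| > t/(\lambda L^2)) \qquad \text{for all } t > 0.
\]

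To bound the left side, I would use the standard subgaussian tail $\mathbb{P}(|X| > s) \leq 2 \exp(-c s^2/L^2)$, which yields $\mathbb{P}(X^2 > 1+t) \leq 2 \exp(-c(1+t)/L^2)$. The lower tail $\mathbb{P}(X^2 < 1-t)$ vanishes for $t \geq 1$ and is at most $1$ for $0 < t < 1$; subgaussianity alone does not control it, but the trivial estimate will suffice. To lower bound the right side, the Mills-type inequality $\mathbb{P}(g > u) \geq c_1 (1+u)^{-1} \exp(-u^2/2)$ gives $\mathbb{P}(|g^2-1| > s) \geq \mathbb{P}(g^2 > 1+s) \geq c_2 (1+s)^{-1/2} \exp(-(1+s)/2)$ for all $s \geq 0$.

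I would then split into two regimes. For $t \geq 1$, substitute $s = t/(\lambda L^2)$ into the lower bound and compare exponents. Choosing $\lambda$ so that $1/(2\lambda) < c$ makes the coefficient of $t$ in the difference of exponents negative (crucially using $L^2 \geq 1$ to pair $1/(\lambda L^2)$ against $c/L^2$), so after passing to logarithms the required inequality reduces to bounding a quantity that is $O(1)$ uniformly in $L \geq 1$ at $t = 1$; both this residual and the $(1+s)^{-1/2}$ prefactor are absorbed into a large enough constant $\kappa$. For $0 < t < 1$, the assumption $L \geq 1$ gives $t/(\lambda L^2) \leq 1/\lambda$, so $\mathbb{P}(|g^2-1| > t/(\lambda L^2)) \geq \mathbb{P}(|g^2-1| > 1/\lambda)$, a positive absolute constant, and the trivial bound $\mathbb{P}(|X^2-1| > t) \leq 1$ is handled by taking $\kappa$ at least its reciprocal.

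The main obstacle I expect is ensuring the constants are genuinely absolute in $L$: both the polynomial prefactor $(1+s)^{-1/2}$ in the Gaussian lower bound and the lack of a direct subgaussian bound on $\{X^2 < 1-t\}$ must be handled uniformly. The hypothesis $L \geq 1$ is precisely what lets the same pair $(\kappa, \lambda)$ work in both regimes: first pick $\lambda > 1/(2c)$ to win the exponential race in the tail regime, then pick $\kappa$ large enough to swallow the polynomial prefactor and to cover the small-$t$ regime via the fixed lower bound $\mathbb{P}(|g^2-1| > 1/\lambda)$.
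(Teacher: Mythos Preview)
Your proposal is correct and follows essentially the same approach as the paper: both reduce to a direct tail comparison, bound the subgaussian upper tail via $\mathbb{P}(|X| > \sqrt{1+t})$, handle the lower tail $\{X^2 < 1-t\}$ trivially (vanishing for $t\geq 1$, bounded by $1$ otherwise), and lower bound $\mathbb{P}(|g| \geq \sqrt{1+s})$ via Mills-type estimates. The only cosmetic difference is that the paper absorbs the polynomial prefactor into the exponential explicitly via $(1+u)^{-1/2} \geq e^{-u/2}$, yielding the clean lower bound $\mathbb{P}(|g| > \sqrt{1+u}) \geq \tfrac{1}{17} e^{-u}$ and making the final comparison of pure exponentials immediate; you instead keep the $(1+s)^{-1/2}$ prefactor and argue that the exponential gap from choosing $\lambda > 1/(2c)$ dominates it, which also works.
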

	
	\begin{proof}
		For any $t > 0$,
		\begin{align*}
			\mathbb{P}\left( |\xi (X^2 - 1)| > t \right)
			=
			\mathbb{P} \left( X^2 - 1 > t \right) + 
			\mathbb{P} \left( - (X^2 - 1) > t \right)
		\end{align*}
		
		For a constant $c \geq 1$, the first term can be bounded by
		\begin{align*}
			\mathbb{P} \left( X^2 - 1 > t \right) = 
			\mathbb{P} \left( |X| > \sqrt{1 + t} \right)
			\leq 
			\exp\left( 1 - \frac{1 + t}{c^2 L^2} \right) \leq
			e \cdot e^{- \frac{t}{c^2 L^2}}.
		\end{align*}
		
		The second term is $0$ if $t \geq 1$ since $-(X^2 - 1) \leq 1$. For $t \leq 1$, $e^{- \frac{t}{c^2 L^2}} \geq e^{- \frac{1}{c^2 L^2}} \geq e^{- 1}$. Then it holds that $\mathbb{P}( - (X^2 - 1) > t) \leq 1 \leq e \cdot e^{- \frac{t}{c^2 L^2}}$, and altogether we obtain
		\begin{align*}
			\mathbb{P}\left( |\xi (X^2 - 1)| > t \right) \leq 2 e \cdot e^{- \frac{t}{c^2 L^2}}.
		\end{align*}

		On the other hand, for any $\lambda > 0$,
		\begin{align*}
			\mathbb{P} \left(\lambda L^2 |\xi' (g^2 - 1)| > t \right) \geq
			\mathbb{P} \left(g^2 - 1 > \frac{t}{\lambda L^2} \right) \geq
			\mathbb{P} \left(|g| \geq \sqrt{1 + \frac{t}{\lambda L^2}} \right).
		\end{align*}
		
		To bound this, we use the following properties of the normal distribution: (see Proposition 7.5 in \cite{foucart_rauhut})
		
		\begin{align}
			\mathbb{P}(|g| \geq u) \geq \sqrt\frac{2}{\pi} \frac{1}{u} \left( 1 - \frac{1}{u^2} \right) e^{- \frac{u^2}{2} }, \qquad
			\mathbb{P}(|g| \geq u) \geq \left( 1 - \sqrt\frac{2}{\pi} u \right) e^{- \frac{u^2}{2} }. \label{eq:gauss_bound_2}
		\end{align}
		
		For $u \leq \frac{1}{4}$, the second inequality in (\ref{eq:gauss_bound_2}) yields $\mathbb{P} \left(|g| \geq \sqrt{1 + u} \right) \geq \frac{1}{10} e^{- \frac{1 + u}{2} }$.
		
		For $u \geq \frac{1}{4}$, the first inequality in (\ref{eq:gauss_bound_2}) gives $\mathbb{P} \left(|g| \geq \sqrt{1 + u} \right) \geq \frac{1}{5} \sqrt\frac{2}{\pi} \frac{1}{\sqrt{1 + u}} e^{- \frac{1 + u}{2} }$. Using that $\frac{1}{\sqrt{1 + u}} \geq e^{-\frac{1}{2} u}$ for all $u > 0$, we obtain
		\begin{align*}
			\mathbb{P} \left(|g| \geq \sqrt{1 + u} \right) \geq
			\frac{1}{5} \sqrt\frac{2}{\pi} e^{-\frac{1}{2} u} \exp\left( - \frac{1 + u}{2} \right) 
			=
			\frac{1}{5} \sqrt\frac{2}{\pi} \exp\left( - \frac{1}{2} - u \right) \geq
			\frac{1}{11} e^{-u}.
		\end{align*}
		
		So for any $u > 0$, $\mathbb{P}(|g| > \sqrt{1 + u}) \geq \frac{1}{17} e^{-u}$. By choosing $\lambda = c^2$ and combining,
		\begin{align*}
			\mathbb{P}\left( |\xi(X^2 - 1)| > t \right) \leq 2 e \cdot e^{-\frac{t}{\lambda L^2}} \leq 93 \cdot \frac{1}{17} e^{-\frac{t}{\lambda L^2}} \leq 92 \mathbb{P}\left(\lambda L^2 | \xi'(g^2 - 1)| > t\right).
		\end{align*}
	\end{proof}

	\begin{thm} \label{thm:quad_gauss_replacement}
		There is an absolute constant $C > 0$ such that the following holds. Let $X \in \R^n$ have independent entries that have mean $0$ and variance $1$ and are subgaussian with $\psi_2$ norm $\leq L$ for an $L \geq 1$. Take a Gaussian vector $g \sim N(0, Id_n)$ and $a \in \R^n$. Then
		\begin{align*}
			\mathbb{E}\left|\sum_{k = 1}^n a_k (X_k^2 - 1) \right|^p \leq
			(C L^2)^p \mathbb{E}\left|\sum_{k = 1}^n a_k (g_k^2 - 1) \right|^p.
		\end{align*}
	\end{thm}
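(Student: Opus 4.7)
The plan is to reduce to the setting of Lemma \ref{lem:subgauss_dominance} and Theorem \ref{thm:dominance_vector} by symmetrizing both sides with auxiliary Rademacher variables. The crucial observation is that $\mathbb{E}(X_k^2 - 1) = 0$ and $\mathbb{E}(g_k^2 - 1) = 0$ by the variance-1 assumption, so that the classical symmetrization inequality of Lemma \ref{lem:symmetrization} is applicable on both sides, while the variables $X_k^2 - 1$ and $g_k^2 - 1$ themselves are not symmetric (they live in $[-1, \infty)$), which is why the direct comparison via stochastic dominance is not possible without first inserting independent signs.

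First, I would introduce independent Rademacher variables $\xi_1, \dots, \xi_n$ and $\xi'_1, \dots, \xi'_n$ that are independent of everything else, and apply Lemma \ref{lem:symmetrization} to the independent mean-zero terms $a_k(X_k^2 - 1)$ to obtain
\[
\mathbb{E}\Bigl|\sum_{k=1}^n a_k(X_k^2 - 1)\Bigr|^p \leq 2^p\, \mathbb{E}\Bigl|\sum_{k=1}^n \xi_k a_k(X_k^2 - 1)\Bigr|^p.
\]
Now the summands $\xi_k a_k(X_k^2-1)$ are independent and symmetric, as are $\xi'_k a_k(g_k^2-1)$, and Lemma \ref{lem:subgauss_dominance} supplies the individual dominance $\xi_k(X_k^2 - 1) \prec_{(\kappa, \lambda L^2)} \xi'_k(g_k^2 - 1)$ with absolute constants $\kappa, \lambda$. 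Applying Theorem \ref{thm:dominance_vector} to the coefficients $a_k$ with the nondecreasing function $\varphi(x) = x^p$ then yields
\[
\mathbb{E}\Bigl|\sum_{k=1}^n \xi_k a_k(X_k^2 - 1)\Bigr|^p \leq 2\lceil\kappa\rceil \bigl(\lceil\kappa\rceil \lambda L^2\bigr)^p \mathbb{E}\Bigl|\sum_{k=1}^n \xi'_k a_k(g_k^2 - 1)\Bigr|^p.
\]

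Finally, I would undo the symmetrization on the Gaussian side: since the $a_k(g_k^2 - 1)$ are independent and mean zero, Lemma \ref{lem:symmetrization} again gives
\[
\mathbb{E}\Bigl|\sum_{k=1}^n \xi'_k a_k(g_k^2 - 1)\Bigr|^p \leq 2^p\, \mathbb{E}\Bigl|\sum_{k=1}^n a_k(g_k^2 - 1)\Bigr|^p.
\]
Chaining the three estimates and absorbing the factor $2\lceil\kappa\rceil$ as well as the two powers of $2^p$ into a single $(CL^2)^p$ (valid for $p \geq 1$ and some absolute $C$) produces the claim. The only subtlety I expect is bookkeeping the direction of symmetrization, i.e.~remembering that we need the ``forward'' symmetrization inequality on the $X$-side to introduce $\xi_k$, but we also need it on the $g$-side after the dominance step since the Gaussian side is still symmetrized there. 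Once this is arranged, no further hard calculation is required beyond quoting the three lemmas in sequence.
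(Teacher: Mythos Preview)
Your proposal is correct and follows essentially the same approach as the paper: symmetrize the subgaussian side via Lemma~\ref{lem:symmetrization}, apply the stochastic dominance of Lemma~\ref{lem:subgauss_dominance} together with Theorem~\ref{thm:dominance_vector}, and then desymmetrize on the Gaussian side using the other inequality in Lemma~\ref{lem:symmetrization}. The paper's proof is exactly this three-step argument with the same bookkeeping of constants.
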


	\begin{proof}
		Consider independent Rademacher variables $\xi_1, \dots, \xi_n, \bar{\xi}_1, \dots, \bar{\xi}_n \in \{\pm 1\}^n$ that are also independent of $X$ and $g$. By the symmetrization Lemma \ref{lem:symmetrization}, it holds that
		\begin{align}
			\mathbb{E}\left|\sum_{k = 1}^n a_k (X_k^2 - 1) \right|^p \leq 2^p \mathbb{E}\left|\sum_{k = 1}^n a_k \xi_k (X_k^2 - 1) \right|^p \nonumber \\
			\mathbb{E}\left|\sum_{k = 1}^n a_k \bar{\xi}_k (g_k^2 - 1) \right|^p \leq 2^p \mathbb{E}\left|\sum_{k = 1}^n a_k (g_k^2 - 1) \right|^p. \label{eq:app_symm_proof_1}
		\end{align}
		
		Using that $\xi_k (X^2 - 1) \prec_{(\kappa, \lambda L^2)} \bar{\xi}_k (g^2 - 1)$ by Lemma \ref{lem:subgauss_dominance} and that $|\cdot|^p$ is a convex nondecreasing function $\R^{+} \rightarrow \R^{+}$, Theorem \ref{thm:dominance_vector} implies that there is a constant $\tilde{C} > 0$ such that
		\begin{align*}
			\mathbb{E}\left|\sum_{k = 1}^n a_k \xi_k (X_k^2 - 1) \right|^p \leq (\tilde{C} L^2)^p \mathbb{E}\left|\sum_{k = 1}^n a_k \bar{\xi}_k (g_k^2 - 1) \right|^p.
		\end{align*}
	\end{proof}
	
	\begin{thm} \label{thm:rearrange_minus1}
		Let $\bd{n} \in \mathbb{N}^d$, $\bd{A} \in \mathbb{R}^{\bd{n}}$, $X^{(1)} \in \mathbb{R}^{n_1}, \dots, X^{(d)} \in \mathbb{R}^{n_d}$, $I \subset [d]$. Then
		\begin{align*}
			\sum_{\bd{i} \in \Jn} A_{\bd{i}} \prod_{l \in [d]} (X^{(l)}_{\bd{i}_l})^2 = 
			\sum_{I \subset [d]} \sum_{\bd{i} \in \Jn([d] \backslash I)} A^{(I)}_{\bd{i}} \prod_{l \in [d] \backslash I} \left[ (X^{(l)}_{\bd{i}_l})^2 - 1 \right]
		\end{align*}
		where for any $\bd{i} \in \Jn([d] \backslash I)$,
		\begin{align*}
			A_{\bd{i}}^{(I)} = \sum_{\bd{j} \in \Jn(I)} A_{\bd{i} \dot\times \bd{j}}.
		\end{align*}
	\end{thm}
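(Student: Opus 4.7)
The proof is a direct algebraic expansion. The plan is to expand each factor $(X^{(l)}_{\bd{i}_l})^2$ as $1 + \bigl[(X^{(l)}_{\bd{i}_l})^2 - 1\bigr]$, distribute the product over the resulting sum, and then rearrange the summations.

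First I would write
\[
\prod_{l \in [d]} (X^{(l)}_{\bd{i}_l})^2 = \prod_{l \in [d]} \Bigl(1 + \bigl[(X^{(l)}_{\bd{i}_l})^2 - 1\bigr]\Bigr) = \sum_{J \subset [d]} \prod_{l \in J} \bigl[(X^{(l)}_{\bd{i}_l})^2 - 1\bigr],
\]
where $J$ indexes the positions at which the nonconstant summand is selected, and the factors for $l \notin J$ contribute $1$.

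Next I would substitute this expansion into the left-hand side, interchange the finite sums over $\bd{i} \in \Jn$ and $J \subset [d]$, and observe that for fixed $J$ the factor $\prod_{l \in J} \bigl[(X^{(l)}_{\bd{i}_l})^2 - 1\bigr]$ depends only on the restriction $\bd{i}_J$. Splitting $\bd{i} = \bd{i}_J \dot\times \bd{i}_{[d] \setminus J}$ via the operator defined in \eqref{eq:operator_dottimes} and carrying out the inner sum over the remaining indices leaves the coefficient
\[
\sum_{\bd{j} \in \Jn([d] \setminus J)} A_{\bd{i}_J \dot\times \bd{j}}.
\]

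Renaming $I := [d] \setminus J$, this coefficient is precisely $A^{(I)}_{\bd{i}_J}$ in the notation of the theorem, and we arrive at the right-hand side. The whole argument is a purely algebraic rearrangement; there is no substantive obstacle beyond careful bookkeeping with the partial-index notation $\Jn(\cdot)$ and $\dot\times$, together with the conventions that $\Jn(\emptyset)$ is a singleton and the empty product equals $1$. These conventions handle the edge cases $J = [d]$ (the fully nonconstant term coincides with $I = \emptyset$) and $J = \emptyset$ (the constant term $\sum_{\bd{j} \in \Jn} A_{\bd{j}}$ corresponding to $I = [d]$) correctly.
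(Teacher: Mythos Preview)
Your proof is correct but proceeds differently from the paper's. The paper starts from the right-hand side, expands each product $\prod_{l \in [d]\setminus I}\bigl[(X^{(l)}_{\bd{i}_l})^2 - 1\bigr]$ into a signed sum $\sum_{I' \subset [d]\setminus I} (-1)^{|[d]\setminus(I\cup I')|}\prod_{l\in I'}(X^{(l)}_{\bd{i}_l})^2$, swaps the order of summation over $I$ and $I'$, and then invokes the inclusion--exclusion identity of Lemma~\ref{lem:m1_set_powers} to collapse the resulting double sum. Your approach instead starts from the left-hand side and uses the direct expansion $a = 1 + (a-1)$ in each factor, so the alternating signs never appear and Lemma~\ref{lem:m1_set_powers} is not needed. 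Both routes are short; yours is arguably the more natural direction and slightly more elementary, while the paper's version has the minor advantage of making transparent that the identity is an instance of M\"obius inversion over the subset lattice.
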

	
	\begin{proof}
		Observing that for any $I \subset [d]$, $\bd{i} \in \Jn(I)$,
		\begin{align*}
			\prod_{l \in [d] \backslash I} \left[ (X^{(l)}_{\bd{i}_l})^2 - 1 \right] =
			\sum_{I' \subset [d] \backslash I} (-1)^{|[d] \backslash (I \cup I')|} \prod_{l \in I'} (X^{(l)}_{\bd{i}_l})^2,
		\end{align*}
		we obtain
		\begin{align*}
			& \sum_{\substack{I \subset [d] \\ \bd{i} \in \Jn([d] \backslash I)}} A^{(I)}_{\bd{i}} \prod_{l \in [d] \backslash I} \left[ (X^{(l)}_{\bd{i}_l})^2 - 1 \right]
			=
			\sum_{\substack{I \subset [d] \\ \bd{i} \in \Jn([d] \backslash I) \\ \bd{j} \in \Jn(I)}} A_{\bd{i} \dot\times \bd{j}} \sum_{I' \subset [d] \backslash I} (-1)^{|[d] \backslash (I \cup I')|} \prod_{l \in I'} (X^{(l)}_{\bd{i}_l})^2 \\
			= &
			\sum_{\substack{I \subset [d] \\ I' \subset [d] \backslash I}} (-1)^{|[d] \backslash (I \cup I')|} \sum_{\substack{\bd{i} \in \Jn([d] \backslash I) \\ \bd{j} \in \Jn(I)}} A_{\bd{i} \dot\times \bd{j}} \prod_{l \in I'} (X^{(l)}_{\bd{i}_l})^2
			=
			\sum_{\substack{I' \subset [d] \\ I \subset [d] \backslash I'}} (-1)^{|[d] \backslash (I \cup I')|} \sum_{\bd{i} \in \Jn}  A_{\bd{i}} \prod_{l \in I'} (X^{(l)}_{\bd{i}_l})^2 \\
			= &
			\sum_{I' \subset [d]} \left[ \left( \sum_{I \subset [d] \backslash I'} (-1)^{|([d] \backslash I') \backslash I|} \right) \cdot \left( \sum_{\bd{i} \in \Jn} A_{\bd{i}} \prod_{l \in I'} (X^{(l)}_{\bd{i}_l})^2 \right) \right].
		\end{align*}
		This implies the claim using Lemma \ref{lem:m1_set_powers}.
	\end{proof}
	
	A key to the proof of the upper moment bound in our main result (Theorem \ref{thm:main}) is the decoupling technique of Theorem \ref{thm:decoupling_total}. With the above auxiliary results, we can give the proof of it here.
	
	\begin{proof}[Proof of Theorem \ref{thm:decoupling_total}]
		\begin{align*}
			b := & \sum_{\bd{i}, \bd{i}' \in \Jn} A_{\bd{i} \dot+ \bd{i}'} \prod_{l \in [d]} X^{(l)}_{\bd{i}_l} X^{(l)}_{\bd{i}'_l} 
			= 
			\sum_{I \subset [d]} \sum_{\substack{\bd{i} \in \Jn(I) \\ \bd{j}, \bd{j}' \in \Jn(I^c) \\ \forall l \in I^c: \bd{j}_l \neq \bd{j}'_l}} A_{(\bd{i} \dot\times \bd{j}) \dot+ (\bd{i} \dot \times \bd{j}')} \prod_{l \in I} (X^{(l)}_{\bd{i}_l})^2 \prod_{l \in I^c} X^{(l)}_{\bd{j}_l} X^{(l)}_{\bd{j}'_l}
		\end{align*}
		since each summand $\bd{i}, \bd{i}'$ is precisely considered in the sum for $I = \{l \in [d]: \bd{i}_l = \bd{i}'_l\}$ and no other $I$.
		
		Now applying Theorem \ref{thm:rearrange_minus1} yields
		\begin{align*}
			b = & \sum_{I \subset [d]} \sum_{\bd{i} \in \Jn(I)} \left( \sum_{\substack{\bd{j}, \bd{j}' \in \Jn(I^c) \\ \forall l \in I^c: \bd{j}_l \neq \bd{j}'_l}} A_{(\bd{i} \dot\times \bd{j}) \dot+ (\bd{i} \dot \times \bd{j}')} \prod_{l \in I^c} X^{(l)}_{\bd{j}_l} X^{(l)}_{\bd{j}'_l} \right) \prod_{l \in I} (X^{(l)}_{\bd{i}_l})^2 \\
			= &
			\sum_{I \subset [d]} \sum_{J \subset I} \sum_{\substack{\bd{i} \in \Jn(J) \\ \bd{k} \in \Jn(I \backslash J)}} \left( \sum_{\substack{\bd{j}, \bd{j}' \in \Jn(I^c) \\ \forall l \in I^c: \bd{j}_l \neq \bd{j}'_l}} A_{(\bd{i} \dot\times \bd{j} \dot\times \bd{k}) \dot+ (\bd{i} \dot \times \bd{j}' \dot\times \bd{k})} \prod_{l \in I^c} X^{(l)}_{\bd{j}_l} X^{(l)}_{\bd{j}'_l} \right) \prod_{l \in J} \left[ (X^{(l)}_{\bd{i}_l})^2 - 1 \right] \\
			= &
			\sum_{\substack{I, J \subset [d]: \\ J \subset I}} \sum_{\substack{\bd{i} \in \Jn(J) \\ \bd{k} \in \Jn(I \backslash J) \\ \bd{j}, \bd{j}' \in \Jn(I^c) \\ \forall l \in I^c: \bd{j}_l \neq \bd{j}'_l}} A_{(\bd{i} \dot\times \bd{j} \dot\times \bd{k}) \dot+ (\bd{i} \dot \times \bd{j}' \dot\times \bd{k})} \prod_{l \in I^c} X^{(l)}_{\bd{j}_l} X^{(l)}_{\bd{j}'_l} \prod_{l \in J} \left[ (X^{(l)}_{\bd{i}_l})^2 - 1 \right]
			=: \sum_{\substack{I, J \subset [d]: \\ J \subset I}} S_{I, J}.
		\end{align*}
		
		Because of
		\begin{align*}
			S_{[d], \emptyset} = \sum_{\bd{k} \in \Jn} A_{\bd{k} \dot+ \bd{k}} = \mathbb{E} \sum_{\bd{i}, \bd{i}' \in \Jn} A_{\bd{i} \dot+ \bd{i}'} \prod_{l \in [d]} X^{(l)}_{\bd{i}_l} X^{(l)}_{\bd{i}'_l}
		\end{align*}
		and the triangle inequality, we obtain
		\begin{align} \label{eq:decoupling_general_1}
			\|b - \mathbb{E} b\|_{L_p} \leq \sum_{\substack{I, J \subset [d]: \\ J \subset I, I \backslash J \neq \emptyset}} \|S_{I, J}\|_{L_p}.
		\end{align}

		For any fixed $l_0 \in I^c$, we obtain that $\|S_{I, J}\|_{L_p} =$
		\begin{align*}
			\left\| \sum_{\substack{\bar{\bd{j}}, \bar{\bd{j}}' \in \Jn(\{l_0\}) \\ \bar{\bd{j}}_{l_0} \neq \bar{\bd{j}}'_{l_0}}} \left( \sum_{\substack{\bd{i} \in \Jn(J) \\ \bd{k} \in \Jn(I \backslash J) \\ \bd{j}, \bd{j}' \in \Jn(I^c \backslash \{l_0\}) \\ \forall l \in I^c: \bd{j}_l \neq \bd{j}'_l}} A_{\substack{(\bd{i} \dot\times \bd{j} \dot\times \bar{\bd{j}} \dot\times \bd{k}) \\ \dot+ (\bd{i} \dot \times \bd{j}' \dot\times \bar{\bd{j}}' \dot\times \bd{k})}} \prod_{l \in I^c} X^{(l)}_{\bd{j}_l} X^{(l)}_{\bd{j}'_l} \prod_{l \in J} \left[ (X^{(l)}_{\bd{i}_l})^2 - 1 \right] \right) X^{(l_0)}_{\bar{\bd{j}}_{l_0}} X^{(l_0)}_{\bar{\bd{j}}'_{l_0}} \right\|_{L_p}.
		\end{align*}
		
		We can apply the decoupling Theorem \ref{thm:decoupling_ord_2} to this for the convex function $|\cdot|^p$ and the expectation conditioned on all variables except $X^{(l_0)}$. This leads to $\|S_{I, J}\|_{L_p} \leq$
		\begin{align*}
			4 \left\| \sum_{\bar{\bd{j}}, \bar{\bd{j}}' \in \Jn(\{l_0\})} \left( \sum_{\substack{\bd{i} \in \Jn(J) \\ \bd{k} \in \Jn(I \backslash J) \\ \bd{j}, \bd{j}' \in \Jn(I^c \backslash \{l_0\}) \\ \forall l \in I^c: \bd{j}_l \neq \bd{j}'_l}} A_{\substack{(\bd{i} \dot\times \bd{j} \dot\times \bar{\bd{j}} \dot\times \bd{k}) \\ \dot+ (\bd{i} \dot \times \bd{j}' \dot\times \bar{\bd{j}}' \dot\times \bd{k})}} \prod_{l \in I^c} X^{(l)}_{\bd{j}_l} X^{(l)}_{\bd{j}'_l} \prod_{l \in J} \left[ (X^{(l)}_{\bd{i}_l})^2 - 1 \right] \right) X^{(l_0)}_{\bar{\bd{j}}_{l_0}} \bar{X}^{(l_0)}_{\bar{\bd{j}}'_{l_0}} \right\|_{L_p}.
		\end{align*}
		
		Repeating this procedure iteratively for all other $l \in I^c$, we obtain
		\begin{align*}
			\|S_{I, J}\|_{L_p}
			\leq
			4^{d - |I|} \left\| \sum_{\substack{\bd{i} \in \Jn(J) \\ \bd{k} \in \Jn(I \backslash J) \\ \bd{j}, \bd{j}' \in \Jn(I^c)}} A_{(\bd{i} \dot\times \bd{j} \dot\times \bd{k}) \dot+ (\bd{i} \dot \times \bd{j}' \dot\times \bd{k})} \prod_{l \in I^c} X^{(l)}_{\bd{j}_l} \bar{X}^{(l)}_{\bd{j}'_l} \prod_{l \in J} \left[ (X^{(l)}_{\bd{i}_l})^2 - 1 \right] \right\|_{L_p}.
		\end{align*}
		Substituting this into (\ref{eq:decoupling_general_1}) completes the proof.
	\end{proof}

	The works in \cite{gauss_decoupling_1} and \cite{gauss_decoupling_2} have investigated polynomials with higher powers of Gaussian variables. Since in our scenario, we only have two occurrences of every vector, thus we can repeatedly apply their result for the case of two coinciding indices. Considering that $H_2(x) = x^2 - 1$ is the Hermite polynomial of degree $2$ and leading coefficient $1$, equation (2.9) in \cite{gauss_decoupling_2} in our setup can be written as follows. Note that as suggested there, the case $p \geq 1$ can also be shown using Jensen's inequality which can be used to show this inequality with coefficient $2$.
	
	\begin{lem} \label{lem:gaussian_decoupling}
		Let $a \in \mathbb{R}^n$, $g, \bar{g} \sim N(0, Id_n)$, $p \geq 1$. Then
		\[
		\left\| \sum_{k = 1}^n a_k (g_k^2 - 1) \right\|_{L_p} \leq
		2 \left\| \sum_{k = 1}^n a_k g_k \bar{g}_k \right\|_{L_p}.
		\]
	\end{lem}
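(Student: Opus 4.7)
The plan is to use a short Jensen-based symmetrization argument exploiting rotational invariance of Gaussian pairs. First I would consider the orthogonal change of variables $(g,\bar g)\mapsto (h,h')$ on $\mathbb{R}^{2n}$ defined coordinate-wise by $h_k=(g_k+\bar g_k)/\sqrt 2$ and $h'_k=(g_k-\bar g_k)/\sqrt 2$. Since the map $(x,y)\mapsto((x+y)/\sqrt 2,(x-y)/\sqrt 2)$ is orthogonal and $(g,\bar g)$ is standard Gaussian on $\mathbb{R}^{2n}$, the pair $(h,h')$ is again distributed as two independent $N(0,Id_n)$ vectors. A direct computation yields $h_k h'_k=\tfrac12(g_k^2-\bar g_k^2)$, so
\[
\sum_{k=1}^n a_k h_k h'_k \;=\; \tfrac12\sum_{k=1}^n a_k(g_k^2-\bar g_k^2)
\]
has the same distribution as $\sum_{k=1}^n a_k g_k\bar g_k$. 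In particular their $L_p$ norms coincide, which already provides the identity $\bigl\|\sum_k a_k(g_k^2-\bar g_k^2)\bigr\|_{L_p}=2\bigl\|\sum_k a_k g_k\bar g_k\bigr\|_{L_p}$.

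Next I would invoke Jensen's inequality in its standard symmetrization-by-an-independent-copy form. Since $\mathbb{E}\sum_k a_k(\bar g_k^2-1)=0$ and $\bar g$ is independent of $g$, convexity of $|\cdot|^p$ and the conditional Jensen inequality give
\[
\left\|\sum_{k=1}^n a_k(g_k^2-1)\right\|_{L_p}
=\left\|\mathbb{E}_{\bar g}\!\left[\sum_{k=1}^n a_k(g_k^2-1)-\sum_{k=1}^n a_k(\bar g_k^2-1)\right]\right\|_{L_p}
\le \left\|\sum_{k=1}^n a_k(g_k^2-\bar g_k^2)\right\|_{L_p}.
\]
Combining this inequality with the distributional identity of the previous step immediately yields the claimed bound with constant $2$.

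I do not expect any serious obstacle: the only point requiring care is the verification that $(h,h')$ is jointly standard Gaussian, which is an immediate consequence of the orthogonality of the $2\times 2$ block $\tfrac{1}{\sqrt 2}\left[\begin{smallmatrix}1&1\\1&-1\end{smallmatrix}\right]$ applied to each coordinate pair $(g_k,\bar g_k)$. Everything else is a one-line application of Jensen's inequality, which is exactly the route flagged in the remark preceding the statement.
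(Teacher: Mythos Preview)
Your proposal is correct and follows essentially the same route as the paper's own argument: first use Jensen's inequality with an independent copy $\bar g$ to pass from $\sum_k a_k(g_k^2-1)$ to $\sum_k a_k(g_k^2-\bar g_k^2)$, then use the orthogonality of $\tfrac{1}{\sqrt 2}\left(\begin{smallmatrix}1&1\\1&-1\end{smallmatrix}\right)$ together with rotational invariance of the standard Gaussian to identify $\tfrac12\sum_k a_k(g_k^2-\bar g_k^2)$ in distribution with $\sum_k a_k g_k\bar g_k$. The only cosmetic difference is that you perform the rotation step before Jensen rather than after.
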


	Combining the previous lemmas, now we can prove the upper bound in the main Theorem \ref{thm:main}.
	
	\subsubsection{Proof of Theorem \ref{thm:main}, upper bound} \label{sec:proof_upper_bound}
	
	\textbf{Step 1: Decoupling}
	
	Let $\alpha := \|X^T A X - \mathbb{E} X^T A X\|_{L_p}$.	By Theorem \ref{thm:decoupling_total},
	\begin{equation}
		\alpha \leq
		\sum_{\substack{J \subset I \subset [d] \\ I \backslash J \neq [d]}} 4^{d - |I|} \left\| \sum_{\substack{\bd{i} \in \Jn(J) \\ \bd{k} \in \Jn(I \backslash J) \\ \bd{j}, \bd{j}' \in \Jn(I^c)}} A_{(\bd{i} \dot\times \bd{j} \dot\times \bd{k}) \dot+ (\bd{i} \dot\times \bd{j}' \dot\times \bd{k})} \prod_{l \in I^c} X^{(l)}_{\bd{j}_l} \bar{X}^{(l)}_{\bd{j}'_l}  \prod_{l \in J} \left[ (X^{(l)}_{\bd{i}_l})^2 - 1 \right]  \right\|_{L_p}.
		\label{eq:before_replacement}
	\end{equation}

	\textbf{Step 2: Replacing the subgaussian factors by Gaussians}
	
	In \eqref{eq:before_replacement}, we can repeatedly apply Lemma \ref{lem:gauss_comparison_linear} to replace all the linear subgaussian factors by Gaussian ones. Afterwards, Theorem \ref{thm:quad_gauss_replacement} allows the same for the quadratic terms. Together, this yields,

	\begin{align}
		\alpha \leq
		\sum_{\substack{J \subset I \subset [d] \\ I \backslash \neq [d]}} (C L)^{|I^c| + |J|} \left\| \sum_{\substack{\bd{i} \in \Jn(J) \\ \bd{k} \in \Jn(I \backslash J) \\ \bd{j}, \bd{j}' \in \Jn(I^c)}} A_{(\bd{i} \dot\times \bd{j} \dot\times \bd{k}) \dot+ (\bd{i} \dot\times \bd{j}' \dot\times \bd{k})} \prod_{l \in I^c} g^{(l)}_{\bd{j}_l} \bar{g}^{(l)}_{\bd{j}'_l}  \prod_{l \in J} \left[ (g^{(l)}_{\bd{i}_l})^2 - 1 \right]  \right\|_{L_p}.
		\label{eq:g2_1}
	\end{align}

	\textbf{Step 3: Decoupling of squared Gaussians}
	In an analogous fashion as in step 3, we can successively replace all the factors $\left[ (g^{(l)}_{\bd{i}_l})^2 - 1 \right]$ in (\ref{eq:g2_1}) by $g^{(l)}_{\bd{i}_l} \bar{g^{(l)}}_{\bd{i}_l}$ using Lemma \ref{lem:gaussian_decoupling}. This leads to
	\begin{align*}
		\alpha \leq &
		\sum_{\substack{J \subset I \subset [d] \\ I \backslash J \neq [d]}} (C L)^{|I^c| + |J|} \left\| \sum_{\substack{\bd{i} \in \Jn(J) \\ \bd{k} \in \Jn(I \backslash J) \\ \bd{j}, \bd{j}' \in \Jn(I^c)}} A_{(\bd{i} \dot\times \bd{j} \dot\times \bd{k}) \dot+ (\bd{i} \dot\times \bd{j}' \dot\times \bd{k})} \prod_{l \in I^c} g^{(l)}_{\bd{j}_l} \bar{g}^{(l)}_{\bd{j}'_l}  \prod_{l \in J} g^{(l)}_{\bd{i}_l} \bar{g}^{(l)}_{\bd{i}_l}  \right\|_{L_p} \\
		= &
		\sum_{\substack{J \subset I \subset [d] \\ I \backslash J \neq [d]}} (C L)^{|I^c| + |J|} \left\| \sum_{\bd{i}, \bd{i}' \in \Jn(I^c \cup J)} A^{(I, J)}_{\bd{i} \dot+ \bd{i}'} \prod_{l \in I^c \cup J} g^{(l)}_{\bd{j}_l} \bar{g}^{(l)}_{\bd{j}'_l} \right\|_{L_p}.
	\end{align*}
	where for all $\bd{i}, \bd{i}' \in \Jn(J \cup I^c)$,
	\begin{align} \label{eq:definition_AIJ}
		A^{(I, J)}_{\bd{i} \dot+ \bd{i}'} =
		\begin{cases}
			\sum_{\bd{k} \in \Jn(I \backslash J)} A_{(\bd{i} \dot\times \bd{k}) \dot+ (\bd{i}' \dot\times \bd{k})} & \text{if } \forall l \in J: \bd{i}_l = \bd{i}_l' \\
			0 & \text{otherwise}.
		\end{cases}
	\end{align}
	
	\textbf{Step 4: Completing the proof}
	Then Theorem \ref{thm:gauss_chaos_moments} yields that
	\begin{align*}
		\left\| \sum_{\bd{i}, \bd{i}' \in \Jn(J \cup I^c)} A^{(I, J)}_{\bd{i} \dot+ \bd{i}'} \prod_{l \in I^c \cup J} g^{(l)}_{\bd{i}_l} \bar{g}^{(l)}_{\bd{i}'_l} \right\|_{L_p}
		\leq \tilde{m}^{(I, J)}_p
	\end{align*}
	where for $S((J \cup I^c) \cup ((J \cup I^c) + d), \kappa)$ being the set of all partitions of $(J \cup I^c) \cup ((J \cup I^c) + d)$ into $\kappa$ sets,
	\begin{align*}
		\tilde{m}^{(I, J)}_p := \sum_{\kappa = 1}^d p^{\kappa / 2} \sum_{(I_1, \dots, I_\kappa) \in S((J \cup I^c) \cup ((J \cup I^c) + d), \kappa)} \|\bd{A}^{(I, J)}\|_{I_1, \dots, I_\kappa}.
	\end{align*}
	
	By Lemma \ref{lem:diagonal_array_norm}, $\|\bd{A}^{(I, J)}\|_{I_1, \dots, I_\kappa} \leq \|\bd{A}^{(I)}\|_{I_1, \dots, I_\kappa}$ where $\bd{A}^{(I)} = \bd{A}^{(I, \emptyset)}$ as given in the statement of Theorem \ref{thm:main}. Together with this, the upper bound in Theorem \ref{thm:main} follows.

	\subsection{Proof of the lower bound}
	
	\subsubsection{Required tools}
	
	In this section, we will prove the lower bound in Theorem \ref{thm:main}. Unlike the upper bound, we will only prove this for the case of Gaussian vectors. Indeed, for arbitrary subgaussian distributions, the lower bound fails to hold as the following simple example for the case $d = 1$ shows: Consider the identity matrix $Id_n$ and a Rademacher vector $\xi \in \{\pm 1\}^n$. Then the object of interest in Theorem \ref{thm:main} is $\xi^T Id_n \xi - \mathbb{E}[\xi^T Id_n \xi] = 0$ even though the moment bounds $m_p$ would be $> 0$.
	
	We follow the approach of reversing all steps in the proof of the upper bound, without the Gaussian comparison steps. This is why also the two decoupling steps before and after the Gaussian comparison can be performed together.
	
	As mentioned before, Gaussian decoupling, with upper as well as lower bounds, has been studied in \cite{gauss_decoupling_2} where central ideas of \cite{gauss_decoupling_1} have been used. \cite{gauss_decoupling_2} provides a decoupling inequality for Gaussian chaos with an arbitrary number of coinciding indices. Similarly to Lemma \ref{lem:gaussian_decoupling}, we can adapt the result of Equation (2.9) in \cite{gauss_decoupling_2} to our situation as follows.
	
	\begin{lem} \label{lem:gauss_dec_order_2}
		Let $A \in \mathbb{R}^{n \times n}$ be a symmetric matrix, $g, \bar{g} \sim N(0, Id_n)$ be independent, and $p \geq 1$.
		\begin{align*}
			\left\| \sum_{j, k \in [n]} A_{j, k} g_j \bar{g}_k \right\|_{L_p} \leq \left\|\sum_{j, k \in [n]} A_{j, k} (g_j g_k - \mathbbm{1}_{j = k}) \right\|_{L_p}.
		\end{align*}
	\end{lem}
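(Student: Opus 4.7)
The plan is to use a rotational invariance trick that exploits the symmetry of $A$. I would introduce the orthogonal change of variables
\[
u := \frac{g + \bar{g}}{\sqrt{2}}, \qquad v := \frac{g - \bar{g}}{\sqrt{2}},
\]
so that $u, v \sim N(0, Id_n)$ are again independent standard Gaussians (by rotational invariance of the joint distribution of $(g, \bar{g})$), and $g = (u+v)/\sqrt{2}$, $\bar{g} = (u-v)/\sqrt{2}$.

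Next I would expand the bilinear form on the left-hand side:
\[
2 \sum_{j,k} A_{j,k} g_j \bar{g}_k = \sum_{j,k} A_{j,k}(u_j + v_j)(u_k - v_k) = \sum_{j,k} A_{j,k} u_j u_k - \sum_{j,k} A_{j,k} v_j v_k + \sum_{j,k} A_{j,k}(v_j u_k - u_j v_k).
\]
Because $A$ is symmetric, the last sum vanishes (relabelling $(j,k) \mapsto (k,j)$ in one of the cross terms). Moreover, the diagonal contributions $\sum_j A_{j,j}$ appearing in the two quadratic sums cancel against each other, so I can freely subtract the trace from each:
\[
\sum_{j,k} A_{j,k} g_j \bar{g}_k = \tfrac{1}{2}\bigl(Z(u) - Z(v)\bigr),
\qquad
Z(w) := \sum_{j,k} A_{j,k}\bigl(w_j w_k - \mathbbm{1}_{j=k}\bigr).
\]

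Finally, the triangle inequality in $L_p$ gives
\[
\left\|\sum_{j,k} A_{j,k} g_j \bar{g}_k\right\|_{L_p} \leq \tfrac{1}{2}\|Z(u)\|_{L_p} + \tfrac{1}{2}\|Z(v)\|_{L_p} = \|Z(g)\|_{L_p},
\]
using that $u$ and $v$ are each marginally distributed as $g$. This is exactly the claim. There is no real obstacle here; the only subtle point is remembering to use symmetry of $A$ to kill the off-diagonal cross terms and to notice that the diagonal contributions from the two quadratic pieces cancel, which is what lets one pass from $Z = Y - \mathrm{tr}(A)$ back to $Y$ before the triangle inequality.
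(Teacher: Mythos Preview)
Your proof is correct and essentially the same as the paper's: both exploit the orthogonal change of variables $u=(g+\bar g)/\sqrt 2$, $v=(g-\bar g)/\sqrt 2$ together with the symmetry of $A$ to write $g^TA\bar g=\tfrac12(Z(u)-Z(v))$, and then conclude by convexity. The only cosmetic difference is that the paper encodes the two rotated vectors via a single Rademacher sign $\xi\in\{\pm1\}$ (so that $(g+\xi\bar g)/\sqrt2$ equals $u$ or $v$) and applies Jensen's inequality in $\xi$, which is exactly your triangle-inequality step.
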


	To generalize this to cases of multiple axes, we iteratively apply Lemma \ref{lem:gauss_dec_order_2} to obtain the following corollary.
	
	\begin{cor} \label{cor:gauss_rev_decouple}
		Let $\bd{n} \in \mathbb{N}^d$, $\bd{A} \in \mathbb{R}^{\bd{n}^{\times 2}}$ such that $\bd{A}$ satisfies the symmetry condition that for all $l \in [d]$ and any $\bd{i}, \bd{i}' \in \Jn([d] \backslash \{l\})$, $\bd{j}, \bd{j}' \in \Jn(\{l\})$,
		\begin{align} \label{eq:A_symmetry}
			A_{(\bd{i} \dot\times \bd{j}) \dot+ (\bd{i}' \dot\times \bd{j}')} = 
			A_{(\bd{i} \dot\times \bd{j}') \dot+ (\bd{i}' \dot\times \bd{j})}
		\end{align}

		Let $g^{(1)}, \bar{g}^{(1)} \sim N(0, Id_{\bd{n}_1}), \dots, g^{(d)}, \bar{g}^{(d)} \sim N(0, Id_{\bd{n}_d})$ be independent. Then for any set $I \subset [d]$, $p \geq 1$,
		\begin{align*}
			\left\| \sum_{\substack{\bd{i}, \bd{i}' \in \Jn(I) \\ \bd{j}\in \Jn(I^c)}} A_{(\bd{i} \dot\times \bd{j}) \dot+ (\bd{i}' \dot\times \bd{j})} \prod_{l \in I} g^{(l)}_{\bd{i}_l} \bar{g}^{(l)}_{\bd{i}'_l} \right\|_{L_p}
			\leq
			\left\| \sum_{\substack{\bd{i}, \bd{i}' \in \Jn(I) \\ \bd{j} \in \Jn(I^c)}} A_{(\bd{i} \dot\times \bd{j}) \dot+ (\bd{i}' \dot\times \bd{j})} \prod_{l \in I} \left[ g^{(l)}_{\bd{i}_l} g^{(l)}_{\bd{i}'_l} - \mathbbm{1}_{\bd{i}_l = \bd{i}'_l} \right] \right\|_{L_p} 
		\end{align*}
	\end{cor}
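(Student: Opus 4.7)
The plan is to iteratively apply Lemma \ref{lem:gauss_dec_order_2}, replacing one pair $(g^{(l)}, \bar{g}^{(l)})$ at a time for each $l \in I$. Enumerate $I = \{l_1, \ldots, l_\mu\}$ with $\mu = |I|$ and define, for $0 \leq s \leq \mu$,
\begin{align*}
E_s := \sum_{\substack{\bd{i}, \bd{i}' \in \Jn(I) \\ \bd{j} \in \Jn(I^c)}} A_{(\bd{i} \dot\times \bd{j}) \dot+ (\bd{i}' \dot\times \bd{j})} \prod_{r \leq s} \bigl[ g^{(l_r)}_{\bd{i}_{l_r}} g^{(l_r)}_{\bd{i}'_{l_r}} - \mathbbm{1}_{\bd{i}_{l_r} = \bd{i}'_{l_r}} \bigr] \prod_{r > s} g^{(l_r)}_{\bd{i}_{l_r}} \bar{g}^{(l_r)}_{\bd{i}'_{l_r}},
\end{align*}
so that $E_0$ and $E_\mu$ are the quantities inside the $L_p$ norms on the left- and right-hand sides of the corollary. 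It then suffices to prove $\|E_{s-1}\|_{L_p} \leq \|E_s\|_{L_p}$ for every $s \in \{1, \ldots, \mu\}$ and chain the inequalities.

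Fix such an $s$ and condition on every random variable except the pair $(g^{(l_s)}, \bar{g}^{(l_s)})$. Then $E_{s-1}$ is a bilinear form $\sum_{u, v} B_{u, v} g^{(l_s)}_u \bar{g}^{(l_s)}_v$ in $(g^{(l_s)}, \bar{g}^{(l_s)})$, and $E_s$ is the corresponding expression $\sum_{u, v} B_{u, v}\bigl[g^{(l_s)}_u g^{(l_s)}_v - \mathbbm{1}_{u = v}\bigr]$, where the random coefficient matrix is
\begin{align*}
B_{u, v} := \sum_{\substack{\bd{i}, \bd{i}' \in \Jn(I \setminus \{l_s\}) \\ \bd{j} \in \Jn(I^c)}} A_{(\bd{i} \dot\times \{l_s \mapsto u\} \dot\times \bd{j}) \dot+ (\bd{i}' \dot\times \{l_s \mapsto v\} \dot\times \bd{j})} \, R_{\bd{i}, \bd{i}'}
\end{align*}
and $R_{\bd{i}, \bd{i}'}$ is the product of the remaining random factors, indexed by $l \neq l_s$ and hence independent of $u, v$. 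Applying the symmetry condition \eqref{eq:A_symmetry} at $l = l_s$ directly to each summand (it swaps only the $l_s$-axis entries in the two halves, leaving $R_{\bd{i}, \bd{i}'}$ untouched) yields $B_{u, v} = B_{v, u}$, so $B$ is almost surely a symmetric matrix that is independent of $(g^{(l_s)}, \bar{g}^{(l_s)})$.

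Invoking Lemma \ref{lem:gauss_dec_order_2} conditionally on the remaining variables then yields $\mathbb{E}[|E_{s-1}|^p \mid \omega] \leq \mathbb{E}[|E_s|^p \mid \omega]$ almost surely; integrating via Fubini gives $\|E_{s-1}\|_{L_p} \leq \|E_s\|_{L_p}$, and chaining over $s = 1, \ldots, \mu$ completes the proof. The main obstacle is really just the bookkeeping of verifying that, after any number of earlier replacement steps, the residual coefficient matrix $B$ along the currently active axis is still symmetric. This succeeds cleanly because the hypothesis \eqref{eq:A_symmetry} is imposed \emph{separately} on each axis, so it can be invoked at the single axis $l_s$ without any interference from the axes already replaced or still to be replaced.
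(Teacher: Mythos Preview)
Your proposal is correct and follows essentially the same approach as the paper: iteratively apply Lemma~\ref{lem:gauss_dec_order_2} one axis $l \in I$ at a time, using the symmetry condition \eqref{eq:A_symmetry} at that axis to verify that the conditional coefficient matrix is symmetric, and then chain the resulting $L_p$ inequalities. Your explicit interpolation via the intermediate quantities $E_s$ makes the bookkeeping slightly more transparent, but the argument is the same.
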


	Independently of the Gaussian decoupling approach, the following two lemmas provide a tool to reverse the application of the rearrangement result Theorem \ref{thm:rearrange_minus1} in the proof of the upper bound.
	
	\begin{lem} \label{lem:rearrange_m1}
		
		Let $\bd{A} \in \mathbb{R}^{\bd{n}^{\times 2}}$ be an array of order $2d$ and $X^{(1)} \in \mathbb{R}^{n_1}, \dots X^{(d)} \in \mathbb{R}^{n_d}$ vectors. Then
		
		\begin{align*}
			\sum_{I \subset [d]} \sum_{\bd{i}, \bd{i}' \in \Jn(I)} \sum_{\bd{j} \in \Jn(I^c)} A_{(\bd{i} \dot\times \bd{j}) \dot+ (\bd{i}' \dot\times \bd{j})} \prod_{l \in I} \left[ X^{(l)}_{\bd{i}_{l}} X^{(l)}_{\bd{i}'_l} - \mathbbm{1}_{\bd{i}_l = \bd{i}'_l} \right] = 
			\sum_{\bd{i}, \bd{i}' \in \Jn} A_{\bd{i} \dot+ \bd{i}'} \prod_{l \in [d]} X^{(l)}_{\bd{i}_l} X^{(l)}_{\bd{i}'_l}.
		\end{align*}
	\end{lem}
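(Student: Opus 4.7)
The plan is to mirror the combinatorial identity used in Theorem~\ref{thm:rearrange_minus1}: expand the product on the left by the standard inclusion-exclusion, exchange the order of summation, and invoke Lemma~\ref{lem:m1_set_powers} to collapse all but one term. This is the natural inverse of Theorem~\ref{thm:rearrange_minus1}, now applied pairwise to each axis $l$ with the two factors $X^{(l)}_{\bd{i}_l}$ and $X^{(l)}_{\bd{i}'_l}$.

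First I would expand
\[
\prod_{l \in I} \left[ X^{(l)}_{\bd{i}_l} X^{(l)}_{\bd{i}'_l} - \mathbbm{1}_{\bd{i}_l = \bd{i}'_l} \right] = \sum_{I' \subset I} (-1)^{|I \setminus I'|} \left(\prod_{l \in I'} X^{(l)}_{\bd{i}_l} X^{(l)}_{\bd{i}'_l}\right) \prod_{l \in I \setminus I'} \mathbbm{1}_{\bd{i}_l = \bd{i}'_l}.
\]
Plugging this in and noting that the indicator forces $\bd{i}_l = \bd{i}'_l$ for $l \in K := I \setminus I'$, one can split $\bd{i}, \bd{i}' \in \Jn(I)$ into a free part on $I'$ and a shared part $\bd{k} \in \Jn(K)$. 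The shared part $\bd{k}$ appears in $\bd{A}$ at the same position on both sides of $\dot+$, so it can be absorbed together with $\bd{j} \in \Jn(I^c)$ into a single partial index $\bd{m} \in \Jn(K \cup I^c) = \Jn([d] \setminus I')$.

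Next I would interchange summations, pulling $I'$ outside and letting $K$ range over all subsets of $[d] \setminus I'$ (equivalently, $I = I' \cup K$ ranges over all supersets of $I'$). The crucial observation is that after the absorption described above, the inner sum
\[
\sum_{\bd{i}, \bd{i}' \in \Jn(I')} \sum_{\bd{m} \in \Jn([d] \setminus I')} A_{(\bd{i} \dot\times \bd{m}) \dot+ (\bd{i}' \dot\times \bd{m})} \prod_{l \in I'} X^{(l)}_{\bd{i}_l} X^{(l)}_{\bd{i}'_l}
\]
no longer depends on $K$, so the total coefficient attached to each $I'$ is $\sum_{K \subset [d] \setminus I'} (-1)^{|K|}$.

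Finally, Lemma~\ref{lem:m1_set_powers} shows this coefficient equals $1$ when $[d] \setminus I' = \emptyset$ and $0$ otherwise. Hence only the term $I' = [d]$ survives, and since $\Jn([d] \setminus I') = \Jn(\emptyset)$ consists of a single empty partial index, the remaining expression is exactly $\sum_{\bd{i}, \bd{i}' \in \Jn} A_{\bd{i} \dot+ \bd{i}'} \prod_{l \in [d]} X^{(l)}_{\bd{i}_l} X^{(l)}_{\bd{i}'_l}$, as required. The only delicate step is the bookkeeping that identifies $(\bd{k}, \bd{j})$ with a single $\bd{m} \in \Jn([d] \setminus I')$ so that the inner sum factors out; everything else is a direct application of the two elementary lemmas.
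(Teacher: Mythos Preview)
Your proposal is correct and follows essentially the same approach as the paper: expand the product over $I$, use the indicator to collapse the indices on $I\setminus I'$ into a single shared partial index that merges with $\bd{j}$, swap the order of summation, and apply Lemma~\ref{lem:m1_set_powers} so that only $I'=[d]$ survives. The paper's proof uses the notation $J$ for your $I'$ and writes the indicator product as $\prod_{l\in I\setminus J}(-\mathbbm{1}_{\bd{i}_l=\bd{i}'_l})$ before observing it equals $(-1)^{|I\setminus J|}$ or $0$, but the argument is otherwise identical.
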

	
	\begin{proof}
		Note that
		\begin{align*}
			\prod_{l \in I} \left[ X^{(l)}_{\bd{i}_{l}} X^{(l)}_{\bd{i}'_l} - \mathbbm{1}_{\bd{i}_l = \bd{i}'_l} \right] =
			\sum_{J \subset I} (-\mathbbm{1}_{\bd{i}_l = \bd{i}'_l})^{|I \backslash J|} \prod_{l \in J} X^{(l)}_{\bd{i}_{l}} X^{(l)}_{\bd{i}'_l}.
		\end{align*}
		
		Using this, we obtain
		\begin{align*}
			\alpha := & \sum_{I \subset [d]} \sum_{\bd{i}, \bd{i}' \in \Jn(I)} \sum_{\bd{j} \in \Jn(I^c)} A_{(\bd{i} \dot\times \bd{j}) \dot+ (\bd{i}' \dot\times \bd{j})} \prod_{l \in I} \left[ X^{(l)}_{\bd{i}_{l}} X^{(l)}_{\bd{i}'_l} - \mathbbm{1}_{\bd{i}_l = \bd{i}'_l} \right] \\
			= &
			\sum_{I \subset [d]} \sum_{\bd{i}, \bd{i}' \in \Jn(I)} \sum_{\bd{j} \in \Jn(I^c)} A_{(\bd{i} \dot\times \bd{j}) \dot+ (\bd{i}' \dot\times \bd{j})} \sum_{J \subset I} \prod_{l \in I \backslash J } (-\mathbbm{1}_{\bd{i}_l = \bd{i}'_l}) \prod_{l \in J} X^{(l)}_{\bd{i}_{l}} X^{(l)}_{\bd{i}'_l}
		\end{align*}
		
		Observing that
		\begin{align*}
			\prod_{l \in I \backslash J } (-\mathbbm{1}_{\bd{i}_l = \bd{i}'_l}) =
			\begin{cases}
				(-1)^{|I \backslash J|} & \text{if } \forall j \in I \backslash J: \bd{i}_l = \bd{i}'_l \\
				0 & \text{otherwise},
			\end{cases}
		\end{align*}
		we can conclude
		\begin{align*}
			\alpha = & \sum_{I \subset [d]} \sum_{J \subset I} \sum_{\substack{\bd{i}, \bd{i}' \in \Jn(J) \\ \bd{k} \in \Jn(I \backslash J)}} \sum_{\bd{j} \in \Jn(I^c)} A_{(\bd{i} \dot\times \bd{j} \dot\times \bd{k}) \dot+ (\bd{i}' \dot\times \bd{j} \dot\times \bd{k} )} (-1)^{|I \backslash J|} \prod_{l \in J} X^{(l)}_{\bd{i}_{l}} X^{(l)}_{\bd{i}'_l} \\
			= &
			\sum_{J \subset [d]} \sum_{I \supset J} (-1)^{|I \backslash J|} \sum_{\bd{i}, \bd{i}' \in \Jn(J)} \sum_{\bd{j} \in \Jn(J^c)} A_{(\bd{i} \dot\times \bd{j}) \dot+ (\bd{i}' \dot\times \bd{j})}  \prod_{l \in J} X^{(l)}_{\bd{i}_{l}} X^{(l)}_{\bd{i}'_l}
		\end{align*}
		
		Lemma \ref{lem:m1_set_powers} yields
		\begin{align*}
			\sum_{I \supset J} (-1)^{|I \backslash J|} = \sum_{I' \subset [d] \backslash J} (-1)^{|I'|} =
			\begin{cases}
				1 & \text{if } J = [d] \\
				0 & \text{otherwise},
			\end{cases}
		\end{align*}
		
		such that
		\begin{align*}
			\alpha = \sum_{\bd{i}, \bd{i}' \in \Jn} A_{\bd{i} \dot+ \bd{i}'}  \prod_{l \in [d]} X^{(l)}_{\bd{i}_{l}} X^{(l)}_{\bd{i}'_l}.
		\end{align*}
		
	\end{proof}

	\begin{lem} \label{lem:remove_mean_inv}
		
		Let $\bd{A} \in \mathbb{R}^{\bd{n}^{\times 2}}$ be an array of order $2d$ and $X^{(1)} \in \mathbb{R}^{n_1}, \dots X^{(d)} \in \mathbb{R}^{n_d}$ independent random vectors with mean $0$, variance $1$ entries. Then for any subset $\emptyset \neq I \subset [d]$, $p \geq 1$,
		
		\begin{align*}
			& \left\|\sum_{\bd{i}, \bd{i}' \in \Jn(I)}
			\sum_{\bd{j} \in \Jn(I^c)}  A_{(\bd{i} \dot\times \bd{j}) \dot\times (\bd{i}' \dot\times \bd{j})} \prod_{l \in I} \left[ X^{(l)}_{\bd{i}_{l}} X^{(l)}_{\bd{i}'_l} - \mathbbm{1}_{\bd{i}_l = \bd{i}'_l} \right] \right\|_{L_p} \\
			\leq & 
			C(|I|) \left\| \sum_{\bd{i}, \bd{i}' \in \Jn} A_{\bd{i} \dot+ \bd{i}'} \prod_{l \in [d]} X^{(l)}_{\bd{i}_l} X^{(l)}_{\bd{i}'_l} - \mathbb{E} \sum_{\bd{i}, \bd{i}' \in \Jn} A_{\bd{i} \dot+ \bd{i}'} \prod_{l \in [d]} X^{(l)}_{\bd{i}_l} X^{(l)}_{\bd{i}'_l} \right\|_{L_p},
			\numberthis \label{eq:rearrange_m1_1}
		\end{align*}
		where $C(|I|)$ is a constant only depending on $|I|$.
	\end{lem}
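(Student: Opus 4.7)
\textbf{Proof plan for Lemma \ref{lem:remove_mean_inv}.}

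The plan is to isolate the left-hand side as one term in a decomposition of the right-hand side, via a Möbius-type inversion. Let me write
\[
Y := \sum_{\bd{i}, \bd{i}' \in \Jn} A_{\bd{i} \dot+ \bd{i}'} \prod_{l \in [d]} X^{(l)}_{\bd{i}_l} X^{(l)}_{\bd{i}'_l} - \mathbb{E} \sum_{\bd{i}, \bd{i}' \in \Jn} A_{\bd{i} \dot+ \bd{i}'} \prod_{l \in [d]} X^{(l)}_{\bd{i}_l} X^{(l)}_{\bd{i}'_l}
\]
and for each $J \subset [d]$ let $T_J$ denote the summand appearing on the left-hand side of the claimed inequality with $I$ replaced by $J$. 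By Lemma \ref{lem:rearrange_m1}, $\sum_{J \subset [d]} T_J$ equals the uncentered quadratic form. Since the $J = \emptyset$ term is $\sum_{\bd{j} \in \Jn} A_{\bd{j} \dot+ \bd{j}}$, which is precisely $\mathbb{E} \sum_{\bd{i}, \bd{i}'} A_{\bd{i} \dot+ \bd{i}'} \prod_{l} X^{(l)}_{\bd{i}_l} X^{(l)}_{\bd{i}'_l}$ by the mean-$0$ variance-$1$ independence assumption, we obtain the clean decomposition $Y = \sum_{\emptyset \neq J \subset [d]} T_J$.

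The crucial structural observations are that (i) $T_J$ depends only on the random vectors $(X^{(l)})_{l \in J}$, and (ii) for each $l \in J$ the conditional expectation $\mathbb{E}_{X^{(l)}} T_J = 0$. Indeed, the factor $X^{(l)}_{\bd{i}_l} X^{(l)}_{\bd{i}'_l} - \mathbbm{1}_{\bd{i}_l = \bd{i}'_l}$ has mean zero regardless of whether $\bd{i}_l = \bd{i}'_l$ (using $\mathbb{E}(X^{(l)}_{\bd{i}_l})^2 = 1$ in one case and independence plus mean $0$ in the other), and the remaining factors are independent of $X^{(l)}$. Consequently, for any $K \subset I$,
\[
Y_K := \mathbb{E}_{(X^{(l)})_{l \in [d] \setminus K}} [Y] = \sum_{\emptyset \neq J \subset K} T_J,
\]
since summands with $J \not\subset K$ vanish (by choosing any $l \in J \setminus K$ and averaging over $X^{(l)}$), while summands with $J \subset K$ are untouched.

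Möbius inversion on the Boolean lattice (setting $T_\emptyset := 0$ so $Y_K = \sum_{J \subset K} T_J$) then yields, for $I \neq \emptyset$,
\[
T_I = \sum_{K \subset I} (-1)^{|I \setminus K|} Y_K.
\]
Applying the triangle inequality and Jensen's inequality $\|Y_K\|_{L_p} = \|\mathbb{E}_{(X^{(l)})_{l \notin K}} Y\|_{L_p} \leq \|Y\|_{L_p}$ finishes the proof:
\[
\|T_I\|_{L_p} \leq \sum_{K \subset I} \|Y_K\|_{L_p} \leq 2^{|I|}\, \|Y\|_{L_p},
\]
so the constant $C(|I|) = 2^{|I|}$ suffices. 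There is no serious obstacle here; the only real subtlety is recognizing that the family $\{T_J\}$ is orthogonal under the filtration by the subsets $\{(X^{(l)})_{l \in K}\}_{K \subset [d]}$, which is what makes the inclusion--exclusion extraction work cleanly.
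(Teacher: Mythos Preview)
Your proof is correct and, in fact, slightly slicker than the paper's. Both arguments start identically: use Lemma~\ref{lem:rearrange_m1} to write $Y = \sum_{\emptyset \neq J \subset [d]} T_J$, and then exploit the two structural facts you state, namely that $T_J$ is measurable with respect to $(X^{(l)})_{l \in J}$ and that $\mathbb{E}_{X^{(l)}} T_J = 0$ for each $l \in J$. The paper then proceeds by induction on $|I|$: it conditions on $(X^{(l)})_{l \in I}$, which kills every $T_J$ with $J \not\subset I$, and uses the inductive hypothesis on the strictly smaller surviving $T_J$'s; this yields a recursively defined constant $C(|I|) = 1 + \sum_{\emptyset \neq J \subsetneq I} C(|J|)$. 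You instead observe that the same conditioning gives $Y_K = \sum_{J \subset K} T_J$ for every $K \subset I$, and invert this directly via M\"obius on the Boolean lattice. Your route is shorter, avoids the induction entirely, and produces the explicit constant $2^{|I|}$, which is in fact smaller than the paper's recursive constant for $|I| \geq 2$.
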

	
	\begin{proof}
		By the assumptions on the vectors $X^{(l)}$,
		\begin{align*}
			E := \mathbb{E} \sum_{\bd{i}, \bd{i}' \in \Jn} A_{\bd{i} \dot+ \bd{i}'} \prod_{l \in [d]} X^{(l)}_{\bd{i}_l} X^{(l)}_{\bd{i}'_l} = 
			\sum_{\bd{i} \in \Jn} A_{\bd{i} \dot+ \bd{i}}.
		\end{align*}
		
		Since this is exactly the term for $I = \emptyset$ in Lemma \ref{lem:rearrange_m1}, we obtain for the term on the right hand side of (\ref{eq:rearrange_m1_1}),
		\begin{align*}
			b := & \sum_{\bd{i}, \bd{i}' \in \Jn} A_{\bd{i} \dot+ \bd{i}'} \prod_{l \in [d]} X^{(l)}_{\bd{i}_l} X^{(l)}_{\bd{i}'_l} - E
			=
			\sum_{\substack{\emptyset \neq J \subset [d] \\ \bd{i}, \bd{i}' \in \Jn(J) \\ \bd{j} \in \Jn(J^c)}} A_{(\bd{i} \dot\times \bd{j}) \dot+ (\bd{i}' \dot\times \bd{j})} \prod_{l \in J} \left[ X^{(l)}_{\bd{i}_{l}} X^{(l)}_{\bd{i}'_l} - \mathbbm{1}_{\bd{i}_l = \bd{i}'_l} \right] =: \sum_{\substack{J \subset [d] \\ J \neq \emptyset}} S_J.
		\end{align*}
		
		Using these terms, we need to show that $\|S_I\|_{L_p} \leq C(|I|) \|b\|_{L_p}$ for all $\emptyset \neq I \subset [d]$.
		
		Now we prove this by induction over $|I|$. First assume $I = \{l_0\}$. For any $J \neq \emptyset, I$, there exists an $l \in J \backslash I$ and then
		\begin{align*}
			\mathbb{E} \left[ \prod_{l \in J} \left[ X^{(l)}_{\bd{i}_{l}} X^{(l)}_{\bd{i}'_l} - \mathbbm{1}_{\bd{i}_l = \bd{i}'_l} \right] \bigg| X^{(l_0)} \right] = 0
		\end{align*}
		since there is at least one factor whose conditional expectation is $0$.
		
		We conclude
		\begin{align*}
			\mathbb{E} \left| S_I  \right|^p =
			\mathbb{E} \left| S_I +
			\mathbb{E} \left[ \sum_{\substack{J \subset [d]: J \neq \emptyset, I}} S_J  \,\bigg|\, X^{(l_0)} \right] \right|^p =
			\mathbb{E} \left| \mathbb{E} \left[ \sum_{\substack{J \subset [d]: J \neq \emptyset}} S_J  \,\bigg|\, X^{(l_0)} \right]
			\right|^p \leq \mathbb{E} |b|^p,
		\end{align*}
		where we used Jensen's inequality on the conditional expectation in the last step.
		
		Now assume that we have already shown (\ref{eq:rearrange_m1_1}) for all $\emptyset \neq I' \subset [d]$ with $|I'| < |I|$.
		
		For all $J \subset [d]$ such that $J \neq \emptyset, I$, one of the following holds.
		\begin{itemize}
			\item $J \backslash I = \emptyset$, i.e., $J \subset I$: Because $J \neq I$, $|J| < |I|$, so by induction 
			\begin{equation} 
				\|S_J\|_{L_p} \leq C(|J|) \|b\|_{L_p}. \label{eq:rearrange_m1_2}
			\end{equation}
			
			\item $J \backslash I \neq \emptyset$. Since there is an $l' \in J \backslash I$,
			\begin{align}
				\mathbb{E} \left[ \prod_{l \in J} \left[ X^{(l)}_{\bd{i}_{l}} X^{(l)}_{\bd{i}'_l} - \mathbbm{1}_{\bd{i}_l = \bd{i}'_l} \right] \,\bigg|\, (X^{(l)})_{l \in I}  \right] = 0.
				\label{eq:rearrange_m1_3}
			\end{align}
		\end{itemize}
		
		The triangle inequality yields together with (\ref{eq:rearrange_m1_2}), that $\|S_I\|_{L_p} \leq$
		\begin{align*}
			\left\|S_I + \sum_{\substack{J \subset I \\ J \neq \emptyset, I}} S_J\right\|_{L_p} + \sum_{\substack{J \subset I \\ J \neq \emptyset, I}} \|S_J\|_{L_p}
			\leq
			\left\|S_I + \sum_{\substack{J \subset I \\ J \neq \emptyset, I}} S_J\right\|_{L_p} + \left[ \sum_{J \subset I, J \neq \emptyset, I} C(|J|) \right] \|b\|_{L_p}.
		\end{align*}
		
		The first term on the right hand side can be controlled with (\ref{eq:rearrange_m1_3}) and Jensen's inequality,
		\begin{align*}
			\mathbb{E} \left| S_I + \sum_{\substack{J \subset I: J \neq \emptyset, I}} S_J \right|^p = &
			\mathbb{E} \left| S_I + \sum_{\substack{J \subset I: J \neq \emptyset, I}} S_J + \mathbb{E}\left[ \sum_{\substack{J \subset [d]: J \backslash I \neq \emptyset}} S_J \,\Bigg|\, (X^{(l)})_{l \in I} \right] \right|^p \\
			= & 
			\mathbb{E} \left| \mathbb{E}\left[ \sum_{J \subset [d]: J \neq \emptyset} S_J \,\Bigg|\, (X^{(l)})_{l \in I} \right] \right|^p \leq \mathbb{E} |b|^p.
		\end{align*}
		
		So altogether $\|S_I\|_{L_p} \leq C(|I|) \|b\|_{L_p}$ where $C(|I|) := \sum_{J \subset I: J \neq \emptyset, I} C(|J|) + 1$ depends only on $|I|$.
	\end{proof}
	
	Now we introduced all the necessary tools and can prove the lower bound of the main result, Theorem \ref{thm:main}.

	\subsubsection{Proof of Theorem \ref{thm:main}, lower bound} \label{sec:proof_lower_bound}
	
	For any $J \subset I \subset [d]$, define the array $\bd{A}^{(I, J)}$ as in the proof of the upper bound \eqref{eq:definition_AIJ} and
	\begin{align*}
		\alpha^{(I, J)} := & \left\| \sum_{\bd{i}, \bd{i}' \in \Jn(J \cup I^c)} A^{(I, J)}_{\bd{i} \dot+ \bd{i}'} \prod_{l \in I^c \cup J} g^{(l)}_{\bd{i}_l} \bar{g}^{(l)}_{\bd{i}'_l} \right\|_{L_p}
		\numberthis \label{eq:lower_bound_alpha_1}
	\end{align*}

	\textbf{Step 1: Adding off-diagonal terms}
	
	Define independent Rademacher vectors $(\xi^{(l)})_{l \in J}$ which are also independent of the $g^{(1)}, \dots g^{(d)}$, $\bar{g}^{(1)}, \dots, \bar{g}^{(d)}$.
	
	Noting that $\mathbb{E}_\xi[\xi^{(l)}_{\bd{i}_l} \xi^{(l)}_{\bd{i}'_l}] = \mathbbm{1}_{\bd{i}_l = \bd{i}'_l}$, we obtain
	
	\begin{align*}
		& \mathbb{E}_{\xi} \left[ \sum_{\substack{\bd{i}, \bd{i}' \in \Jn(J) \\ \bd{k} \in \Jn(I \backslash J) \\ \bd{j}, \bd{j}' \in \Jn(I^c)}} A_{(\bd{i} \dot\times \bd{j} \dot\times \bd{k}) \dot+ (\bd{i}' \dot\times \bd{j}' \dot\times \bd{k})} \prod_{l \in I^c} g^{(l)}_{\bd{j}_l} \bar{g}^{(l)}_{\bd{j}'_l}  \prod_{l \in J} (\xi^{(l)}_{\bd{i}_l} g^{(l)}_{\bd{i}_l}) (\xi^{(l)}_{\bd{i}'_l} \bar{g}^{(l)}_{\bd{i}'_l}) \right] \\
		& =
		\sum_{\bd{i}, \bd{i}' \in \Jn(J \cup I^c)} A^{(I, J)}_{\bd{i} \dot+ \bd{i}'} \prod_{l \in I^c \cup J} g^{(l)}_{\bd{i}_l} \bar{g}^{(l)}_{\bd{i}'_l}
	\end{align*}
	
	Substituting into (\ref{eq:lower_bound_alpha_1}) and applying Jensen's inequality and Fubini's theorem yields
	\begin{align*}
		(\alpha^{(I, J)})^p = &
		\mathbb{E}_{g, \bar{g}} \left| \mathbb{E}_{\xi} \sum_{\substack{\bd{i}, \bd{i}' \in \Jn(J) \\ \bd{j}, \bd{j}' \in \Jn(I^c)}}
		\sum_{\bd{k} \in \Jn(I \backslash J)} A_{(\bd{i} \dot\times \bd{j} \dot\times \bd{k}) \dot+ (\bd{i}' \dot\times \bd{j}' \dot\times \bd{k})} \prod_{l \in I^c} g^{(l)}_{\bd{j}_l} \bar{g}^{(l)}_{\bd{j}'_l}  \prod_{l \in J} (\xi^{(l)}_{\bd{i}_l} g^{(l)}_{\bd{i}_l}) (\xi^{(l)}_{\bd{i}'_l} \bar{g}^{(l)}_{\bd{i}'_l}) \right|^p \\
		\leq &
		\mathbb{E}_{\xi} \mathbb{E}_{g, \bar{g}} \left| \sum_{\substack{\bd{i}, \bd{i}' \in \Jn(J) \\ \bd{j}, \bd{j}' \in \Jn(I^c)}}
		\sum_{\bd{k} \in \Jn(I \backslash J)} A_{(\bd{i} \dot\times \bd{j} \dot\times \bd{k}) \dot+ (\bd{i}' \dot\times \bd{j}' \dot\times \bd{k})} \prod_{l \in I^c} g^{(l)}_{\bd{j}_l} \bar{g}^{(l)}_{\bd{j}'_l}  \prod_{l \in J} (\xi^{(l)}_{\bd{i}_l} g^{(l)}_{\bd{i}_l}) (\xi^{(l)}_{\bd{i}'_l} \bar{g}^{(l)}_{\bd{i}'_l}) \right|^p
	\end{align*}
	
	By the symmetry of the normal distribution, conditioned on $(\xi^{(l)})_{l \in J}$, $(\xi^{(l)}_{\bd{i}_l} g^{(l)}_{\bd{i}_l}, \xi^{(l)}_{\bd{i}'_l} \bar{g}^{(l)}_{\bd{i}'_l})$ and $(g^{(l)}_{\bd{i}_l}, \bar{g}^{(l)}_{\bd{i}'_l})$ have the same distribution. So we can conclude
	
	\begin{align*}
		\alpha^{(I, J)} \leq 
		\left\| \sum_{\substack{\bd{i}, \bd{i}' \in \Jn(J \cup I^c)}} \sum_{\bd{k} \in \Jn(I \backslash J)} A_{(\bd{i} \dot\times \bd{k}) \dot+ (\bd{i}' \dot\times \bd{k})} \prod_{l \in J \cup I^c} g^{(l)}_{\bd{i}_l} \bar{g}^{(l)}_{\bd{i}'_l} \right\|_{L_p}.
	\end{align*}

	\textbf{Step 2: Inverse Gaussian decoupling}
	
	For every $J \subset I \subset [d]$, we obtain then by the symmetry of $\bd{A}$ and Corollary \ref{cor:gauss_rev_decouple},
	\begin{align*}
		\alpha^{(I, J)} \leq
		\left\| \sum_{\substack{\bd{i}, \bd{i}' \in \Jn(J \cup I^c)}} \sum_{\bd{k} \in \Jn(I \backslash J)} A_{(\bd{i} \dot\times \bd{k}) \dot+ (\bd{i}' \dot\times \bd{k})} \prod_{l \in J \cup I^c} \left[ g^{(l)}_{\bd{i}_l} g^{(l)}_{\bd{i}'_l} - \mathbbm{1}_{\bd{i}_l = \bd{i}'_l} \right] \right\|_{L_p}.
	\end{align*}

	\textbf{Step 3: Removing the mean subtractions in every factor}
	
	Since $I \backslash J \neq [d]$, $J \cup I^c \neq \emptyset$ and Lemma \ref{lem:remove_mean_inv} provides
	\begin{align*}
		\alpha^{(I, J)} \leq 
		C_1(|J \cup I^c|)\left\| \sum_{\bd{i}, \bd{i}' \in \Jn} A_{\bd{i} \dot+ \bd{i}'} \prod_{l \in [d]} g^{(l)}_{\bd{i}_l} g^{(l)}_{\bd{i}'_l} - \mathbb{E} \sum_{\bd{i}, \bd{i}' \in \Jn} A_{\bd{i} \dot+ \bd{i}'} \prod_{l \in [d]} g^{(l)}_{\bd{i}_l} g^{(l)}_{\bd{i}'_l} \right\|_{L_p}.
	\end{align*}
	
	Adding this up over all $J \subset I \subset [d]$, $I \backslash J \neq [d]$ yields
	\begin{align} \label{eq:lower_bound_1}
		\sum_{\substack{J \subset I \subset [d] \\ I \backslash J \neq [d]}} \alpha^{(I, J)} \leq
		C(d) \left\| \sum_{\bd{i}, \bd{i}' \in \Jn} A_{\bd{i} \dot+ \bd{i}'} \prod_{l \in [d]} g^{(l)}_{\bd{i}_l} g^{(l)}_{\bd{i}'_l} - \mathbb{E} \sum_{\bd{i}, \bd{i}' \in \Jn} A_{\bd{i} \dot+ \bd{i}'} \prod_{l \in [d]} g^{(l)}_{\bd{i}_l} g^{(l)}_{\bd{i}'_l} \right\|_{L_p}
	\end{align}
	where $C(d) := \sum_{\substack{J \subset I \subset [d]: I \backslash J \neq [d]}} C_1(|J \cup I^c|)$ depends only on $d$.
	
	\textbf{Step 4: Completing the proof}
	
	Restricting the left hand side in \eqref{eq:lower_bound_1} to the terms in which $J = \emptyset$. The remaining terms $\alpha^{(I, \emptyset)}$ only contain the arrays $\bd{A}^{(I, \emptyset)}$ which are equal to the $\bd{A}^{(I)}$ from the theorem statement. Subsequently, we can bound the $\alpha^{(I, \emptyset)}$ from below using Theorem \ref{thm:gauss_chaos_moments} (similarly to the upper bound) to obtain the lower bound in Theorem \ref{thm:main}.
	
	\subsection{Concentration of $\|A X \|_2$}
	
	In this section, we apply our main results to the concentration of $\|A X\|_2$ where $X = X^{(1)} \otimes \dots \otimes X^{(d)}$ is a Kronecker product of independent vectors with subgaussian entries. The following statement is a direct consequence from Theorem \ref{thm:main} and Lemma \ref{lem:a2b2_ab_comparison}.
	
	\begin{cor} \label{cor:moments_non_squared}
		Let $A \in \mathbb{R}^{n_0 \times N}$ be a matrix where $N = n_1 \dots n_d$ and $X := X^{(1)} \otimes \dots \otimes X^{(d)} \in \R^N$ a random vector as in Theorem \ref{thm:main}.
		
		Let $\bd{B} \in \R^{\bd{n}^{\times 2}}$ be the rearrangement of the matrix $B = A^* A$ as an array with $2 d$ axes. For any $I \subset [d]$, define the array $\bd{B}^{(I)}$ as in \eqref{eq:definition_BI}.
		
		For $T \subset [2 d]$, $1 \leq \kappa \leq 2d$, denote $S(T, \kappa)$ for the set of partitions of $T$ into $\kappa$ sets and $I^c = [d] \backslash I$. Define for any $p \geq 1$ and any $\kappa \in [2d]$,
		\begin{align*}
			m_{p, \kappa} & := \sum_{\substack{I \subset [d] \\ I \neq [d]}}
			\sum_{(I_1, \dots, I_\kappa) \in S((I^c) \cup (I^c + d), \kappa)} \|\bd{B}^{(I)}\|_{I_1, \dots, I_\kappa} \\
			m_p & := L^{2 d} \sum_{\kappa = 1}^{2 d}  \min \left\{ p^\frac{\kappa}{2} \frac{m_{p, \kappa}}{\|A\|_F}, p^{\frac{\kappa}{4}} \sqrt{m_{p, \kappa}}  \right\}
		\end{align*}
		
		Then there is a constant $C(d) > 0$, depending only on $d$, such that for all $p \geq 2$,
		\begin{align*}
			\left\| \|A X\|_2 - \|A\|_F \right\|_{L_p} \leq
			C(d) m_p.
		\end{align*}
		
		If in addition, $X^{(1)} \sim N(0, Id_{n_1}), \dots, X^{(d)} \sim N(0, Id_{n_d})$ are normally distributed (i.e., $L$ is constant) and $\bd{B}$ satisfies the symmetry condition \eqref{eq:symmetry_condition}, then also the lower bound
		\begin{align*}
			\tilde{C}(d) m_p \leq
			\left\| \|A X\|_2 - \|A\|_F \right\|_{L_p}
		\end{align*}
		holds for all $p \geq 2$.  Above, $\tilde{C}(d) > 0$ that depends only on $d$.
	\end{cor}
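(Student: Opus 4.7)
The plan is to combine Theorem~\ref{thm:main}, applied to the Gram matrix $B := A^T A$, with the pointwise comparison of Lemma~\ref{lem:a2b2_ab_comparison}. Because each $X^{(l)}$ has independent, mean-zero, unit-variance entries, the Kronecker product $X = X^{(1)} \otimes \dots \otimes X^{(d)}$ is isotropic, so $\mathbb{E}[X^T B X] = \operatorname{tr}(B) = \|A\|_F^2$. Setting $a := \|AX\|_2 \geq 0$ and $b := \|A\|_F$, one finds $a^2 - b^2 = X^T B X - \mathbb{E}[X^T B X]$, and the rearrangement of $B$ as an order-$2d$ array produces exactly the arrays $\bd{B}^{(I)}$ from \eqref{eq:definition_BI}, which coincide with the $\bd{A}^{(I)}$ appearing in Theorem~\ref{thm:main}.

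For the upper bound I would invoke the pointwise estimate $|a-b| \leq \min\{|a^2-b^2|/\|A\|_F,\, \sqrt{|a^2-b^2|}\}$ from Lemma~\ref{lem:a2b2_ab_comparison}. This yields two parallel $L_p$-bounds. The first, $\|a-b\|_{L_p} \leq \|a^2-b^2\|_{L_p}/\|A\|_F$, fed into Theorem~\ref{thm:main}, contributes $p^{\kappa/2} m_{p,\kappa}/\|A\|_F$ per $\kappa$. The second, $\|a-b\|_{L_p} \leq \|a^2-b^2\|_{L_{p/2}}^{1/2}$ (which I bound by $\|a^2-b^2\|_{L_p}^{1/2}$ when $p < 4$ via moment monotonicity), combined with the concavity bound $\sqrt{\sum_\kappa c_\kappa} \leq \sum_\kappa \sqrt{c_\kappa}$, contributes $p^{\kappa/4} \sqrt{m_{p,\kappa}}$ per $\kappa$. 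Picking the smaller contribution for each $\kappa$ and summing recovers the upper bound on $m_p$, up to the constant $C(d) L^{2d}$.

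For the lower bound, restricted to Gaussian $X^{(l)}$ with $\bd{B}$ satisfying~\eqref{eq:symmetry_condition}, I would use the reverse direction of Lemma~\ref{lem:a2b2_ab_comparison}, namely $|a-b| \geq \tfrac{1}{3}\min\{|a^2-b^2|/\|A\|_F,\, \sqrt{|a^2-b^2|}\}$, together with the matching lower bound of Theorem~\ref{thm:main} on $\|a^2-b^2\|_{L_p}$. Splitting the expectation according to the event $\{|a^2-b^2| \leq \|A\|_F^2\}$ via a Paley--Zygmund-style case analysis, one of the two branches of the min dominates and contributes (per $\kappa$) a share of order at least $p^{\kappa/2} m_{p,\kappa}/\|A\|_F$ or $p^{\kappa/4} \sqrt{m_{p,\kappa}}$, giving the lower bound on $\|a-b\|_{L_p}$.

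The main obstacle will be obtaining the ``sum of mins'' form exactly as stated, rather than the looser ``min of sums'' estimate that comes out of the naive $L_p$-argument; in general these two forms can differ by more than a constant factor. To bridge this I would view the bound in Theorem~\ref{thm:main} as a sum of $\kappa$-indexed Gaussian chaos contributions (each estimated separately via Theorem~\ref{thm:gauss_chaos_moments}) and exploit the subadditivity $\sqrt{|y_1+y_2|} \leq \sqrt{|y_1|}+\sqrt{|y_2|}$ in conjunction with the triangle inequality, so that the two branches of Lemma~\ref{lem:a2b2_ab_comparison} may be applied independently to each $\kappa$-piece.
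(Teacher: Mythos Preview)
Your overall plan---apply Theorem~\ref{thm:main} to $B=A^*A$ and combine with Lemma~\ref{lem:a2b2_ab_comparison}---is exactly what the paper does (it calls the corollary a ``direct consequence'' of these two results). But the obstacle you flag is illusory, and the fix you propose for it is not how the argument runs.

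\textbf{Why there is no ``sum of mins versus min of sums'' obstacle.} Set $c_\kappa:=p^{\kappa/2}m_{p,\kappa}$, $S:=\sum_{\kappa=1}^{2d}c_\kappa$ and $b:=\|A\|_F$. Your two naive routes give
\[
\big\|\,\|AX\|_2-\|A\|_F\,\big\|_{L_p}\ \le\ C(d)L^{2d}\min\Bigl\{\tfrac{S}{b},\ \sqrt{S}\Bigr\},
\]
while the claimed bound is $C(d)L^{2d}\sum_\kappa \min\{c_\kappa/b,\sqrt{c_\kappa}\}$. These two quantities are equivalent up to a factor $\sqrt{2d}$. Indeed, if $S\le b^2$ then every $c_\kappa\le b^2$, so both equal $S/b$. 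If $S>b^2$ and some $c_{\kappa_0}>b^2$, then the maximal $c_{\kappa^*}\ge S/(2d)$ also exceeds $b^2$, whence $\sum_\kappa\min\{c_\kappa/b,\sqrt{c_\kappa}\}\ge\sqrt{c_{\kappa^*}}\ge\sqrt{S}/\sqrt{2d}$. If $S>b^2$ but every $c_\kappa\le b^2$, then $\sum_\kappa\min\{\cdot\}=S/b>\sqrt S$. The reverse inequality is immediate. Hence the naive upper bound already yields the stated form; no per-$\kappa$ decomposition of the random variable is needed (nor is one available: the $\kappa$-sum in Theorem~\ref{thm:main} indexes norms of the same array, not pieces of the chaos).

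\textbf{Lower bound.} Your Paley--Zygmund sketch can be replaced by a cleaner quadratic argument. With $Y:=\|AX\|_2^2-\|A\|_F^2$ and $z:=\big\|\,\|AX\|_2-\|A\|_F\,\big\|_{L_p}$, the identity $|Y|\le|a-b|^2+2b|a-b|$ gives, for $p\ge 4$,
\[
\|Y\|_{L_{p/2}}\ \le\ z^2+2b\,z.
\]
By the lower bound of Theorem~\ref{thm:main}, $\|Y\|_{L_{p/2}}\ge \tilde c(d)\sum_\kappa(p/2)^{\kappa/2}m_{p,\kappa}\ge \tilde c(d)2^{-d}S$. Solving $z^2+2bz\ge cS$ yields $z\ge \tfrac{cS}{b+\sqrt{b^2+cS}}\gtrsim_d\min\{S/b,\sqrt S\}$, which by the equivalence above is $\gtrsim_d m_p$. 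For $2\le p<4$ use $\|\cdot\|_{L_p}\ge\|\cdot\|_{L_2}$, the case $p=4$ just established, and the fact that $m_p$ and $m_4$ differ only by a factor depending on $d$.
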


	\begin{lem} \label{lem:join_sep_norm}
		Let $\bd{A} \in \R^{n_1 \times \dots \times n_d}$. Assume that $I_1, \dots, I_\kappa$ is a partition of $[d]$. Let $\bar{I}_{\kappa} \cup \bar{I}_{\kappa + 1} = I_\kappa$ be a partition into two subsets. Then
		\[
		\|\bd{A}\|_{I_1, \dots, I_{\kappa - 1}, \bar{I}_{\kappa}, \bar{I}_{\kappa+1}} \leq \|\bd{A}\|_{I_1, \dots, I_\kappa} \leq \sqrt{\min\left\{ \prod_{l \in \bar{I}_{\kappa}} n_l, \prod_{l \in \bar{I}_{\kappa + 1}} n_l \right\}} \|\bd{A}\|_{I_1, \dots, I_{\kappa - 1}, \bar{I}_{\kappa}, \bar{I}_{\kappa+1}}.
		\]
	\end{lem}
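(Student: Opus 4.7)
The plan is to reduce the lemma to the classical matrix facts $\|M\|_{2 \to 2} \leq \|M\|_F \leq \sqrt{\min\{r,c\}}\,\|M\|_{2 \to 2}$ for $M \in \R^{r \times c}$. The key observation is that both norms appearing in the statement can be expressed as suprema that, after contracting $\bd{A}$ against fixed unit-norm test tensors on $I_1, \dots, I_{\kappa - 1}$, reduce to either the Frobenius or the spectral norm of a single residual object indexed by $I_\kappa$.

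First I would fix arbitrary unit-norm test tensors $\bd{\alpha}^{(1)} \in \R^{\bd{n}}(I_1), \dots, \bd{\alpha}^{(\kappa - 1)} \in \R^{\bd{n}}(I_{\kappa-1})$ and define the contracted residual tensor $\bd{M} \in \R^{\bd{n}}(I_\kappa)$ by
\[
M_{\bd{k}} := \sum_{\bd{j} \in \bd{J}^{\bd{n}}([d] \setminus I_\kappa)} A_{\bd{j} \dot\times \bd{k}} \prod_{l=1}^{\kappa - 1} \alpha^{(l)}_{\bd{j}_{I_l}}, \qquad \bd{k} \in \bd{J}^{\bd{n}}(I_\kappa).
\]
By Cauchy--Schwarz (with equality attained at $\bd{\alpha}^{(\kappa)} = \bd{M}/\|\bd{M}\|_2$), the inner supremum over unit-norm $\bd{\alpha}^{(\kappa)} \in \R^{\bd{n}}(I_\kappa)$ of $\sum_{\bd{k}} M_{\bd{k}} \alpha^{(\kappa)}_{\bd{k}}$ equals $\|\bd{M}\|_2$. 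Viewing $\bd{M}$ as a matrix $\widetilde{M}$ with rows indexed by $\bd{J}^{\bd{n}}(\bar{I}_\kappa)$ and columns by $\bd{J}^{\bd{n}}(\bar{I}_{\kappa+1})$, the inner supremum over unit-norm $\bar{\bd{\alpha}}^{(\kappa)} \in \R^{\bd{n}}(\bar{I}_\kappa),\,\bar{\bd{\alpha}}^{(\kappa+1)} \in \R^{\bd{n}}(\bar{I}_{\kappa+1})$ of $\sum_{\bd{k}} M_{\bd{k}} \bar{\alpha}^{(\kappa)}_{\bd{k}_{\bar{I}_\kappa}} \bar{\alpha}^{(\kappa+1)}_{\bd{k}_{\bar{I}_{\kappa+1}}}$ is by definition the spectral norm $\|\widetilde{M}\|_{2 \to 2}$.

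Next, since matricization merely reshuffles entries, $\|\bd{M}\|_2 = \|\widetilde{M}\|_F$. Applying the classical scalar bounds
\[
\|\widetilde{M}\|_{2 \to 2} \;\leq\; \|\widetilde{M}\|_F \;\leq\; \sqrt{\mathrm{rank}(\widetilde{M})}\,\|\widetilde{M}\|_{2 \to 2} \;\leq\; \sqrt{\min\{r, c\}}\,\|\widetilde{M}\|_{2 \to 2},
\]
with $r = \prod_{l \in \bar{I}_\kappa} n_l$ and $c = \prod_{l \in \bar{I}_{\kappa+1}} n_l$, gives the two desired inequalities pointwise in the still-free tensors $\bd{\alpha}^{(1)}, \dots, \bd{\alpha}^{(\kappa - 1)}$. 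Taking the outer supremum over these then converts the pointwise bounds directly into the claimed inequalities between $\|\bd{A}\|_{I_1, \dots, I_\kappa}$ and $\|\bd{A}\|_{I_1, \dots, I_{\kappa-1}, \bar{I}_\kappa, \bar{I}_{\kappa+1}}$.

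The analytical content is elementary; the main obstacle is purely the notational overhead of using $\dot\times$ and $\bd{J}^{\bd{n}}(\cdot)$ carefully to make precise the identification between the tensor norms of Definition \ref{def:tensor_norm} and the Frobenius/spectral norms of the matricization $\widetilde{M}$.
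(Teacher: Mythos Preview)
Your proposal is correct and follows essentially the same approach as the paper: contract $\bd{A}$ against test tensors on $I_1,\dots,I_{\kappa-1}$ to obtain a residual indexed by $I_\kappa$, matricize it along $\bar I_\kappa \times \bar I_{\kappa+1}$, and invoke $\|\widetilde M\|_{2\to 2} \le \|\widetilde M\|_F \le \sqrt{\min\{r,c\}}\,\|\widetilde M\|_{2\to 2}$. The only cosmetic difference is that the paper treats the two inequalities separately (proving the first via the observation that a rank-one $\bar{\bd\alpha}^{(\kappa)}\otimes\bar{\bd\alpha}^{(\kappa+1)}$ is a feasible $\bd\alpha^{(\kappa)}$) and fixes optimizers rather than arguing pointwise-then-sup, but the content is the same.
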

	
	\begin{proof}
		Take arrays $\bd{\alpha}^{(1)} \in \R^{\bd{n}}(I_1), \dots, \bd{\alpha}^{(\kappa - 1)} \in \R^{\bd{n}}(I_{\kappa - 1}), \bar{\bd{\alpha}}^{(\kappa)} \in \R^{\bd{n}}(\bar{I}_{\kappa}), \bar{\bd{\alpha}}^{(\kappa + 1)} \in \R^{\bd{n}}(\bar{I}_{\kappa + 1})$, with Frobenius norm $1$ each, such that $\|\bd{A}\|_{I_1, \dots, I_{\kappa - 1}, \bar{I}_{\kappa}, \bar{I}_{\kappa + 1}} = 
		\sum_{\bd{i} \in \Jn} A_{\bd{i}} \alpha^{(1)}_{\bd{i}_{I_1}} \dots \alpha^{(\kappa - 1)}_{\bd{i}_{I_{\kappa - 1}}} \bar{\alpha}^{(\kappa)}_{\bd{i}_{\bar{I}_{\kappa}}} \bar{\alpha}^{(\kappa + 1)}_{\bd{i}_{\bar{I}_{\kappa + 1}}}$. Now define $\bd{\alpha}^{(\kappa)} \in \R^{\bd{n}}(I_\kappa)$ by $\alpha^{(\kappa)}_{\bd{i}} = \bar{\alpha}^{(\kappa)}_{\bd{i}_{\bar{I}_{\kappa}}} \bar{\alpha}^{(\kappa + 1)}_{\bd{i}_{\bar{I}_{\kappa + 1}}}$ for every $\bd{i} \in \Jn(I_\kappa)$. Then $\|\bd{\alpha}^{(\kappa)}\|_{2} = 1$ and by the definition of $\|\cdot\|_{I_1, \dots, I_\kappa}$ as the supremum over $\bd{\alpha}^{(1)}, \dots, \bd{\alpha}^{(\kappa)}$, we obtain
		\begin{align*}
			\|\bd{A}\|_{I_1, \dots, I_{\kappa - 1}, \bar{I}_{\kappa}, \bar{I}_{\kappa + 1}} = \sum_{\bd{i} \in \Jn} A_{\bd{i}} \alpha^{(1)}_{\bd{i}_{I_1}} \dots \alpha^{(\kappa)}_{\bd{i}_{I_\kappa}}
			\leq \|\bd{A}\|_{I_1, \dots, I_{\kappa}},
		\end{align*}
		which proves the first inequality.
		
		To prove the second inequality, take arrays $\bd{\alpha}^{(1)} \in \R^{\bd{n}}(I_1), \dots, \bd{\alpha}^{(\kappa)} \in \R^{\bd{n}}(I_\kappa)$ such that
		\begin{align*}
			\|\bd{A}\|_{I_1, \dots, I_\kappa} = \sum_{\bd{i} \in \Jn} A_{\bd{i}} \alpha^{(1)}_{\bd{i}_{I_1}} \dots \alpha^{(\kappa)}_{\bd{i}_{I_\kappa}}.
		\end{align*}
		Now define $\tilde{\bd{A}} \in \R^{\bd{n}}(I_\kappa)$ such that for all $\bd{i} \in \Jn(\bar{I}_\kappa), \bd{j} \in \Jn(\bar{I}_{\kappa + 1})$,
		\[
		\tilde{\bd{A}}_{\bd{i} \dot\times \bd{j}} = \sum_{\bd{k} \in \Jn([d] \backslash I_\kappa)} A_{\bd{i} \dot\times \bd{j} \dot\times \bd{k}} \alpha^{(1)}_{\bd{k}_{I_1}} \dots \alpha^{(\kappa - 1)}_{\bd{k}_{I_{\kappa - 1}}}.
		\]
		
		For $N_1 := \prod_{l \in \bar{I}_{\kappa}} n_l$ and $N_2 := \prod_{l \in \bar{I}_{\kappa + 1}} n_l$, we can interpret $\tilde{\bd{A}}$ as a matrix $\tilde{A} \in \R^{N_1 \times N_2}$ with rows indexed by $\bd{i} \in \Jn(\bar{I}_\kappa)$ and columns indexed by $\bd{j} \in \Jn(\bar{I}_{\kappa + 1})$.
		
		Then
		\begin{align*}
			\|\tilde{A}\|_F & = \sup_{\bd{\beta} \in \R^{\bd{n}}(I_\kappa), \|\bd{\beta}\|_2 = 1} \sum_{\bd{i} \in \Jn(I_\kappa)} \tilde{A}_{\bd{i}} \beta_{\bd{i}}, \\
			\|\tilde{A}\|_{2 \rightarrow 2} & = \sup_{\substack{\bd{\beta}^{(1)} \in \R^{\bd{n}}(\bar{I}_\kappa), \bd{\beta}^{(2)} \in \R^{\bd{n}}(\bar{I}_{\kappa + 1}), \\ \|\bd{\beta}^{(1)}\|_2 = \|\bd{\beta}^{(2)}\|_2 = 1}} \sum_{\substack{\bd{i} \in \Jn(\bar{I}_\kappa) \\ \bd{j} \in \Jn(\bar{I}_{\kappa + 1}) }} \tilde{A}_{\bd{i} \dot\times \bd{j}} \beta_{\bd{i}}^{(1)} \beta_{\bd{j}}^{(2)},
		\end{align*}
		such that
		\begin{align*}
			\|\tilde{A}\|_F & = \sup_{\bd{\beta} \in \R^{\bd{n}}(I_\kappa), \|\bd{\beta}\|_2 = 1} \sum_{\bd{i} \in \Jn(I_\kappa)} \sum_{\bd{k} \in \Jn([d] \backslash I_\kappa)} A_{\bd{i} \dot\times \bd{k}} \alpha^{(1)}_{\bd{k}_{I_1}} \dots \alpha^{(\kappa - 1)}_{\bd{k}_{I_{\kappa - 1}}} \beta_{\bd{i}} \\
			& = \sup_{\bd{\beta} \in \R^{\bd{n}}(I_\kappa), \|\bd{\beta}\|_2 = 1} \sum_{\bd{i} \in \Jn} A_{\bd{i}} \alpha^{(1)}_{\bd{i}_{I_1}} \dots \alpha^{(\kappa - 1)}_{\bd{i}_{I_{\kappa - 1}}} \beta_{\bd{i}_{I_\kappa}},
		\end{align*}
		where by definition the maximum is attained at $\bd{\beta} = \bd{\alpha}^{(\kappa)}$, implying
		\begin{align}
			\|\tilde{A}\|_F = \|\bd{A}\|_{I_1, \dots, I_\kappa}. \label{eq:norm_ineq_fro}
		\end{align}
		For the spectral norm, we obtain from the definition of $\|\cdot\|_{I_1, \dots, I_{\kappa - 1}, \bar{I}_{\kappa}, \bar{I}_{\kappa + 1}}$,
		\begin{align*}
			\|\tilde{A}\|_{2 \rightarrow 2} & = \sup_{\substack{\bd{\beta}^{(1)} \in \R^{\bd{n}}(\bar{I}_\kappa), \bd{\beta}^{(2)} \in \R^{\bd{n}}(\bar{I}_{\kappa + 1}), \\ \|\bd{\beta}^{(1)}\|_2 = \|\bd{\beta}^{(2)}\|_2 = 1}} \sum_{\substack{\bd{i} \in \Jn(\bar{I}_\kappa) \\ \bd{j} \in \Jn(\bar{I}_{\kappa + 1}) }} \sum_{\bd{k} \in \Jn([d] \backslash I_\kappa)} A_{\bd{i} \dot\times \bd{j} \dot\times \bd{k}} \alpha^{(1)}_{\bd{k}_{I_1}} \dots \alpha^{(\kappa - 1)}_{\bd{k}_{I_{\kappa - 1}}} \beta_{\bd{i}}^{(1)} \beta_{\bd{j}}^{(2)} \\
			& = \sup_{\substack{\bd{\beta}^{(1)} \in \R^{\bd{n}}(\bar{I}_\kappa), \bd{\beta}^{(2)} \in \R^{\bd{n}}(\bar{I}_{\kappa + 1}), \\ \|\bd{\beta}^{(1)}\|_2 = \|\bd{\beta}^{(2)}\|_2 = 1}} \sum_{\bd{i} \in \Jn} A_{\bd{i}} \alpha^{(1)}_{\bd{i}} \dots \alpha^{(\kappa - 1)}_{\bd{i}_{I_{\kappa - 1}}} \beta_{\bd{i}_{\bar{I}_{\kappa}}}^{(1)} \beta_{\bd{j}_{\bar{i}_{\kappa + 1}}}^{(2)} \\
			& \leq \|\bd{A}\|_{I_1, \dots, I_{\kappa - 1}, \bar{I}_{\kappa}, \bar{I}_{\kappa + 1}}.
			\numberthis \label{eq:norm_ineq_spec}
		\end{align*}
		The second inequality now follows from \eqref{eq:norm_ineq_fro}, \eqref{eq:norm_ineq_spec} and the general property of matrices that
		\begin{align*}
			\|\tilde{A}\|_F \leq \sqrt{\mathrm{rank}(\tilde{A})} \|\tilde{A}\|_{2 \rightarrow 2}
			\leq \sqrt{\min\{N_1, N_2\}} \|\tilde{A}\|_{2 \rightarrow 2}.
		\end{align*}
		
	\end{proof}
	
	\begin{lem} \label{lem:norm_B_I}
		Let $\bd{B} \in \R^{\bd{n}^{\times 2}}$, $I \subset [d]$. Define $\bd{B}^{(I)}$ as in \eqref{eq:definition_BI}.
		
		Let $I_1, \dots, I_{\kappa}$ be a partition of $([d] \backslash I) \cup (d + ([d] \backslash I))$. Let $I_{\kappa+1}, \dots, I_{\kappa + |I|}$ be the sets $\{j, j+d\}$ for every $j \in I$. Then $I_1, \dots, I_{\kappa + |I|}$ is a paritition of $[2d]$ and
		\[
		\|\bd{B}^{(I)}\|_{I_1, \dots, I_{\kappa}} \leq \sqrt{\prod_{l \in I} n_l } \|\bd{B}\|_{I_1, \dots, I_{\kappa + |I|}}
		\]
	\end{lem}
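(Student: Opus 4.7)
The plan is to prove this via a direct plug-in: on each of the pair-sets $I_{\kappa+m} = \{l, l+d\}$ with $l \in I$, we will choose the test array $\bd{\alpha}^{(\kappa+m)}$ to be the normalized diagonal, and then recognize that this exactly reproduces the partial-trace definition \eqref{eq:definition_BI} of $\bd{B}^{(I)}$, up to the claimed factor.

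More concretely, for each $l \in I$ let $\bd{\alpha}^{(\kappa+m)} \in \R^{\bd{n}^{\times 2}}(\{l, l+d\})$ be given by
\[
\alpha^{(\kappa+m)}_{\bd{j}} := \frac{1}{\sqrt{n_l}}\,\mathbbm{1}_{\bd{j}_l = \bd{j}_{l+d}}, \qquad \bd{j} \in \Jn(\{l, l+d\}).
\]
A one-line computation shows $\|\bd{\alpha}^{(\kappa+m)}\|_2 = 1$, so these arrays are admissible in the supremum defining $\|\bd{B}\|_{I_1, \dots, I_{\kappa+|I|}}$. Now take any unit-norm $\bd{\alpha}^{(1)}, \dots, \bd{\alpha}^{(\kappa)}$ supported on $I_1, \dots, I_\kappa$. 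Writing each full index in $\bd{J}^{\bd{n}^{\times 2}}$ as $\bd{j} \dot+ \bd{j}'$ with $\bd{j}, \bd{j}' \in \Jn$, the diagonal indicators force $\bd{j}_l = \bd{j}'_l$ for every $l \in I$, which allows us to decompose $\bd{j} = \bd{i} \dot\times \bd{k}$ and $\bd{j}' = \bd{i}' \dot\times \bd{k}$ with $\bd{i}, \bd{i}' \in \Jn(I^c)$ and a common $\bd{k} \in \Jn(I)$.

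Since the test arrays $\bd{\alpha}^{(1)}, \dots, \bd{\alpha}^{(\kappa)}$ only depend on indices in $I^c \cup (I^c + d)$, i.e.\ on $\bd{i} \dot+ \bd{i}'$, the sum factorizes and the inner sum over $\bd{k} \in \Jn(I)$ is exactly the definition of $B^{(I)}_{\bd{i} \dot+ \bd{i}'}$. This gives
\[
\sum_{\bd{i} \in \bd{J}^{\bd{n}^{\times 2}}} B_{\bd{i}}\, \alpha^{(1)}_{\bd{i}_{I_1}} \cdots \alpha^{(\kappa+|I|)}_{\bd{i}_{I_{\kappa+|I|}}}
= \frac{1}{\sqrt{\prod_{l \in I} n_l}} \sum_{\bd{i}, \bd{i}' \in \Jn(I^c)} B^{(I)}_{\bd{i} \dot+ \bd{i}'} \, \alpha^{(1)}_{(\bd{i} \dot+ \bd{i}')_{I_1}} \cdots \alpha^{(\kappa)}_{(\bd{i} \dot+ \bd{i}')_{I_\kappa}}.
\]
The left-hand side is bounded above by $\|\bd{B}\|_{I_1, \dots, I_{\kappa+|I|}}$ by definition, and taking the supremum of the right-hand side over the remaining unit-norm $\bd{\alpha}^{(1)}, \dots, \bd{\alpha}^{(\kappa)}$ yields $\frac{1}{\sqrt{\prod_{l \in I} n_l}} \|\bd{B}^{(I)}\|_{I_1, \dots, I_\kappa}$. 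Rearranging gives the lemma.

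There is no real obstacle here; the only subtlety is getting the normalization of the diagonal test array right (the factor $1/\sqrt{n_l}$ is what produces the factor $\sqrt{\prod_{l \in I} n_l}$ on the right side of the inequality) and carefully tracking the index decomposition $\bd{j} \dot+ \bd{j}' = (\bd{i} \dot\times \bd{k}) \dot+ (\bd{i}' \dot\times \bd{k})$ so that the inner sum collapses to $\bd{B}^{(I)}$.
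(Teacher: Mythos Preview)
Your proposal is correct and takes essentially the same approach as the paper's proof: both arguments choose the normalized diagonal test arrays $\alpha^{(\kappa+m)}_{\bd{j}} = n_l^{-1/2}\mathbbm{1}_{\bd{j}_l = \bd{j}_{l+d}}$ on each pair-set $\{l, l+d\}$ for $l \in I$, verify that this reproduces the partial-trace definition of $\bd{B}^{(I)}$ up to the factor $(\prod_{l \in I} n_l)^{-1/2}$, and then bound by the definition of $\|\bd{B}\|_{I_1, \dots, I_{\kappa+|I|}}$. The only cosmetic difference is that the paper fixes optimizers $\bd{\alpha}^{(1)}, \dots, \bd{\alpha}^{(\kappa)}$ at the outset, whereas you keep them arbitrary and take the supremum at the end.
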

	
	\begin{proof}
		Take $\bd{\alpha}^{(1)} \in \R^{\bd{n}^{\times 2}}(I_1), \dots, \bd{\alpha}^{(\kappa)} \in \R^{\bd{n}^{\times 2}}(I_\kappa)$, all having a Frobenius norm of $1$, such that
		
		\begin{align*}
			& \|\bd{B}^{(I)}\|_{I_1, \dots, I_\kappa}
			= \sum_{\bd{i} \in \bd{J}^{\bd{n}^{\times 2}}(I^c \cup (I^c + d))} B^{(I)}_{\bd{i}} \alpha^{(1)}_{\bd{i}_{I_1}} \dots \alpha^{(\kappa)}_{\bd{i}_{I_\kappa}} \\
			& = \sum_{\bd{i} \in \bd{J}^{\bd{n}^{\times 2}}(I^c \cup (I^c + d))} \sum_{\bd{k} \in \Jn(I)} B_{\bd{i} \dot\times (\bd{k} \dot+ \bd{k})} \alpha^{(1)}_{\bd{i}_{I_1}} \dots \alpha^{(\kappa)}_{\bd{i}_{I_\kappa}}
			= \sum_{\bd{i} \in \bd{J}^{\bd{n}^{\times 2}}} B_{\bd{i}} \alpha^{(1)}_{\bd{i}_{I_1}} \dots \alpha^{(\kappa)}_{\bd{i}_{I_\kappa}} \mathbbm{1}_{\forall l \in I: \bd{i}_l = \bd{i}_{l + d}}.
			\numberthis \label{eq:norm_B_I_1}
		\end{align*}
		
		Now define $\bd{\alpha}^{(\kappa + 1)} \in \R^{\bd{n}^{\times 2}}(\{j_1, j_1 + d\}), \dots, \bd{\alpha}^{(\kappa + |I|)} \in \R^{\bd{n}^{\times 2}}(\{j_{|I|}, j_{|I|} + d\})$ (where $I = \{j_1, \dots, j_{|I|}\}$) such that for all $r \in [|I|]$ and $\bd{i} \in \bd{J}^{\bd{n}^{\times 2}}(\{j_r, j_r + d\})$,
		\begin{align*}
			\alpha^{(\kappa + r)}_{\bd{i}} =
			\begin{cases}
				\frac{1}{\sqrt{n_{j_r}}} & \text{if } \bd{i}_{j_r} = \bd{i}_{j_r + d} \\
				0 & \text{otherwise}.
			\end{cases}
		\end{align*}
		
		Then for  $\bd{i} \in \bd{J}^{\bd{n}^{\times 2}}(I \cup (I + d))$
		\begin{align*}
			\alpha^{(\kappa + 1)}_{\bd{i}_{I_{\kappa + 1}}} \dots \alpha^{(\kappa + |I|)}_{\bd{i}_{I_{\kappa + |I|}}} = \frac{1}{\sqrt{\prod_{l \in I} n_l}} \mathbbm{1}_{\forall l \in I: \bd{i}_l = \bd{i}_{l + d}}
		\end{align*}
		
		Substituting this into \eqref{eq:norm_B_I_1} yields
		\begin{align*}
			\|\bd{B}^{(I)}\|_{I_1, \dots, I_\kappa}
			& = \sqrt{\prod_{l \in I} n_l} \sum_{\bd{i} \in \bd{J}^{\bd{n}^{\times 2}}} B_{\bd{i}} \alpha^{(1)}_{\bd{i}_{I_1}} \dots \alpha^{(\kappa)}_{\bd{i}_{I_\kappa}} \alpha^{(\kappa + 1)}_{\bd{i}_{I_{\kappa + 1}}} \dots \alpha^{(\kappa + |I|)}_{\bd{i}_{I_{\kappa + |I|}}}
			\leq \sqrt{\prod_{l \in I} n_l} \|\bd{B}\|_{I_1, \dots, I_{\kappa + |I|}}
		\end{align*}
	\end{proof}
	
	Using the aforementioned results, we can give the proof of Theorem \ref{thm:Ax_concentration} about $\|A (X^{(1)} \otimes \dots \otimes X^{(d)})\|_2$ in which we find suitable bounds for all the tensor norms of $A^* A$ in terms of $\|A\|_{2 \rightarrow 2}$ and $\|A\|_F$.
	
	\begin{proof}[Proof of Theorem \ref{thm:Ax_concentration}]
		Let $B := A^* A \in \R^{n^d \times n^d}$ and $\bd{B} \in \R^{\bd{n}^{\times 2}}$ be the corresponding array of order $2 d$ obtained by rearranging $B$ for $\bd{n} = (n, \dots, n)$. Note that here the dimensions along all axes are equal. For $I \subset [2 d]$, define $\bd{B}^{(I)}$ as in Corollary \ref{cor:moments_non_squared}.
		
		\textbf{Step 1: Showing the norm inequalities}
		\begin{equation}
			\|\bd{B}^{(I)}\|_{I_1, \dots, I_\kappa} \leq n^{\frac{|I|}{2}} \|B\|_F
			\qquad
			\|\bd{B}^{(I)}\|_{I_1, \dots, I_\kappa} \leq n^{d - \frac{\kappa}{2}} \|B\|_{2 \rightarrow 2}.
			\label{eq:B_I_norm_inequalities}
		\end{equation}
		
		In both cases, we start by extending $I_1, \dots, I_\kappa$ to $I_1, \dots, I_{\kappa + |I|}$ as in Lemma \ref{lem:norm_B_I}, obtaining
		\begin{equation}
			\|\bd{B}^{(I)}\|_{I_1, \dots, I_\kappa} \leq n^\frac{|I|}{2} \|\bd{B}\|_{I_1, \dots, I_{\kappa + |I|}}
			\label{eq:norm_B_I_2}
		\end{equation}
		Then the first inequality of (\ref{eq:B_I_norm_inequalities}) follows by repeatedly joining all the sets $I_1, \dots, I_{\kappa + |I|}$ in the sense of Lemma \ref{lem:join_sep_norm} (first inequality) yielding $\|\bd{B}\|_{I_1, \dots, I_{\kappa + |I|}} \leq \|\bd{B}\|_{[2 d]}  = \|B\|_F$.
		
		For the second inequality in (\ref{eq:B_I_norm_inequalities}), we distinguish two cases. First assume that $\kappa \leq d - |I|$. Then $|I| \leq d - \kappa$. Since $B$ is a matrix in $\R^{n^d \times n^d}$, $\|B\|_{2 \rightarrow 2} \leq n^\frac{d}{2} \|B\|_F$ and with the first inequality in (\ref{eq:B_I_norm_inequalities}), we obtain
		\[
		\|\bd{B}^{(I)}\|_{I_1, \dots, I_\kappa} \leq n^{\frac{|I|}{2}} n^{\frac{d}{2}} \|B\|_{2 \rightarrow 2} \leq
		n^{\frac{d - \kappa}{2}} n^{\frac{d}{2}} \|B\|_{2 \rightarrow 2} = n^{d - \frac{\kappa}{2}} \|B\|_{2 \rightarrow 2}.
		\]
		
		In the other case that $\kappa > d - |I|$, denote $\kappa'$ for the number of sets among $I_1, \dots, I_\kappa$ that only contain one element. Since each of the other sets must contain at least two elements, this leads to the inequality
		\begin{align*}
			\kappa' + 2 (\kappa - \kappa') & \leq  |I_1 \cup \dots \cup I_\kappa| \qquad
			\Rightarrow  2 \kappa - \kappa' & \leq  2(d - |I|) \qquad
			\Rightarrow  \kappa' & \geq  2(\kappa - d + |I|).
		\end{align*}
		
		This implies that among $I_1, \dots, I_\kappa$, there must be at least $\kappa - d + |I|$ sets with exactly one element that are all contained in $[d]$ or all contained in $[2 d] \backslash [d]$. Without loss of generality, we can assume that these are $I_1, \dots, I_{\kappa - d + |I|}$. Now take the unions $\bar{I}_1 := I_1 \cup \dots \cup I_{\kappa - d + |I|}$ and $\bar{I}_2 := I_{\kappa - d + |I| + 1} \cup \dots \cup I_{\kappa + |I|}$. With (\ref{eq:norm_B_I_2}) and the first inequality of Lemma \ref{lem:join_sep_norm}, we obtain
		\[
		\|\bd{B}^{(I)}\|_{I_1, \dots, I_\kappa} \leq n^\frac{|I|}{2} \|\bd{B}\|_{\bar{I}_1, \bar{I}_2}.
		\]
		
		Now split up $\bar{I}_2$ into $\bar{I}_{2, 1} := \bar{I}_2 \cap [d]$ and $\bar{I}_{2, 2} := \bar{I}_2 \cap ([2 d] \backslash [d])$. If neither $\bar{I}_{2, 1}$ nor $\bar{I}_{2, 2}$ is empty, then with the second inequality of Lemma \ref{lem:join_sep_norm}, we obtain
		\[
		\|\bd{B}^{(I)}\|_{I_1, \dots, I_\kappa} \leq n^\frac{|I|}{2} n^{\frac{1}{2} \min\{|\bar{I}_{2, 1}|, |\bar{I}_{2, 2}|\}} \|\bd{B}\|_{\bar{I}_1, \bar{I}_{2, 1}, \bar{I}_{2, 2}} \leq
		n^{\frac{|I|}{2} + \frac{1}{2} \min\{|\bar{I}_{2, 1}|, |\bar{I}_{2, 2}|\} } \|\bd{B}\|_{[d], ([2 d] \backslash [d])},
		\]
		where in the last step we used the first inequality in Lemma \ref{lem:join_sep_norm} with the fact that $\bar{I}_1 \cup \bar{I}_{2, 1} \cup \bar{I}_{2, 2} = [2 d]$ and each of these three sets is contained in either $[d]$ or $[2 d] \backslash [d]$. Note that the inequality between the first and the third term still holds in the case that $\bar{I}_{2, 1}$ or $\bar{I}_{2, 2}$ is empty and thus Lemma 8.4 cannot be applied in the first step.
		
		Now assume $\bar{I}_1 \subset [d]$ (otherwise $\bar{I}_1 \subset [2 d] \backslash [d]$ and the proof works analogously). Then $\bar{I}_1 \cup \bar{I}_{2, 1} = [d]$ and $\bar{I}_{2, 1} = [2 d] \backslash [d]$. So $\min\{|\bar{I}_{2, 1}|, |\bar{I}_{2, 2}|\} = |\bar{I}_{2, 1}| = d - |\bar{I}_1| = d - (\kappa - d + |I|) = 2d - \kappa - |I|$. This implies
		\[
		\|\bd{B}^{(I)}\|_{I_1, \dots, I_\kappa} \leq n^{\frac{|I|}{2} + \frac{1}{2}(2d - \kappa - |I|) } \|\bd{B}\|_{[d], ([2 d] \backslash [d])} = n^{d - \frac{\kappa}{2}} \|B\|_{2 \rightarrow 2}.
		\]
		This completes the proof of (\ref{eq:B_I_norm_inequalities}).
		
		\textbf{Step 2: Moment and tail bounds}
		
		Now, use Corollary \ref{cor:moments_non_squared} and its notation of $m_{p, \kappa}$ and $m_p$. The number of terms in the sum of the definition of $m_{p, \kappa}$ only depends on $d$. This fact together with (\ref{eq:B_I_norm_inequalities}) leads to
		\begin{align*}
			m_{p, \kappa} \leq & C_1(d) \max_{I \subset [d], I \neq [d]} n^\frac{|I|}{2} \|B\|_F = C_1(d) n^\frac{d - 1}{2} \|B\|_F \leq C_1(d) n^\frac{d - 1}{2} \|A\|_{2} \|A\|_F. \\
			m_{p, \kappa} \leq & C_1(d) n^{d - \frac{\kappa}{2}} \|B\|_{2 \rightarrow 2} = C_1(d) n^{d - \frac{\kappa}{2}} \|A\|_{2 \rightarrow 2}^2,
		\end{align*}
		where $C_1(d)$ is a constant depending only on $d$. Furthermore, we obtain
		\begin{align*}
			& m_p \leq C_1(d) L^{2 d} \\
			& \cdot \sum_{\kappa = 1}^{2 d} \min \left\{p^\frac{\kappa}{2} n^\frac{d - 1}{2} \|A\|_{2 \rightarrow 2}, p^\frac{\kappa}{2} n^{d - \frac{\kappa}{2}} \frac{\|A\|_{2 \rightarrow 2}^2}{\|A\|_F}, p^\frac{\kappa}{4} n^{\frac{d - 1}{4}} \sqrt{\|A\|_{2 \rightarrow 2} \|A\|_F}, p^\frac{\kappa}{4} n^{\frac{d}{2} - \frac{\kappa}{4}} \|A\|_{2 \rightarrow 2}   \right\}.
		\end{align*}
		
		Since this is an upper bound on the $L_p$ norm of $\|A X\|_2 - \|A\|_F$, Lemma \ref{lem:moment_tail_bound} implies
		\begin{align*}
			\mathbb{P}\left( \left| \|A X\|_2 - \|A\|_F \right| > t \right) \leq e^2 \exp \left( - C_2(d) \min_{\kappa \in [2 d]} \beta_\kappa \right)
		\end{align*}
		where
		\begin{align*}
			\beta_\kappa := \max\Biggl\{ & \left( \frac{t}{n^\frac{d - 1}{2} \|A\|_{2 \rightarrow 2} } \right)^\frac{2}{\kappa}, \left( \frac{t \|A\|_F}{n^{d - \frac{\kappa}{2}}\|A\|_{2 \rightarrow 2}^2} \right)^{\frac{2}{\kappa}}, \\
			& \left( \frac{t}{n^{\frac{d - 1}{4}} \sqrt{\|A\|_{2 \rightarrow 2} \|A\|_F}} \right)^\frac{4}{\kappa}, \left( \frac{t}{n^{\frac{d}{2} - \frac{\kappa}{4}} \|A\|_{2 \rightarrow 2} } \right)^\frac{4}{\kappa} \Biggr\}. \numberthis \label{eq:bk_moment_bound}
		\end{align*}
		
		Now, for each of multiple different ranges of $t$, we select one of the four terms in \eqref{eq:bk_moment_bound}.
		
		\textbf{Step 3: Bound for $t \leq n^\frac{d}{2} \|A\|_{2 \rightarrow 2}$}
		
		For $\kappa = 1$, we obtain using the first term in \eqref{eq:bk_moment_bound}, $\beta_1 \geq \left( {t} / {(n^{\frac{d - 1}{2}} \|A\|_{2 \rightarrow 2})} \right)^2$.
		
		For $\kappa \geq 2$, we can use the fourth term in \eqref{eq:bk_moment_bound} to show the same bound because
		\begin{align*}
			\beta_\kappa \geq \left( \frac{t}{n^{\frac{d}{2} - \frac{\kappa}{4}} \|A\|_{2 \rightarrow 2} } \right)^\frac{4}{\kappa} = n \left( \frac{t}{n^{\frac{d}{2}} \|A\|_{2 \rightarrow 2} } \right)^\frac{4}{\kappa} \geq n \left( \frac{t}{n^{\frac{d}{2}} \|A\|_{2 \rightarrow 2} } \right)^2 = \frac{t^2}{n^{d - 1} \|A\|_{2 \rightarrow 2}^2}.
		\end{align*}
		
		This implies that
		\begin{align*}
			\mathbb{P}\left( \left| \|A X\|_2 - \|A\|_F \right| > t \right) \leq e^2 \exp \left( - C_2(d) \frac{t^2}{n^{d - 1} \|A\|_{2 \rightarrow 2}^2 } \right).
		\end{align*}
		
		\textbf{Step 5: Bound for $t \geq  n^\frac{d}{2} \|A\|_{2 \rightarrow 2}$}
		
		For all $\kappa \in [2 d]$, using the fourth term in \eqref{eq:bk_moment_bound} yields
		\begin{align*}
			\beta_\kappa \geq \left( \frac{t}{n^{\frac{d}{2} - \frac{\kappa}{4}} \|A\|_{2 \rightarrow 2} } \right)^\frac{4}{\kappa} = n \left( \frac{t}{n^{\frac{d}{2}} \|A\|_{2 \rightarrow 2} } \right)^\frac{4}{\kappa} \geq n \left( \frac{t}{n^{\frac{d}{2}} \|A\|_{2 \rightarrow 2} } \right)^\frac{4}{2 d} = \left( \frac{t}{\|A\|_{2 \rightarrow 2} } \right)^\frac{2}{d},
		\end{align*}
		such that
		\begin{align*}
			\mathbb{P}\left( \left| \|A X\|_2 - \|A\|_F \right| > t \right) \leq e^2 \exp \left( - C_2(d) \left( \frac{t}{\|A\|_{2 \rightarrow 2} } \right)^\frac{2}{d} \right).
		\end{align*}
		
		\textbf{Step 6: Bound for $n^{\frac{d - 1}{4}} \|A\|_{2 \rightarrow 2} \leq t \leq n^{\frac{d - 1}{4}} \|A\|_F$ }
		
		Using the third term in \eqref{eq:bk_moment_bound}, we obtain that
		\begin{align*}
			\beta_\kappa & \geq  \left( \frac{t^2}{n^{\frac{d - 1}{2}} \|A\|_{2 \rightarrow 2} \|A\|_F} \right)^\frac{2}{\kappa} \geq \left( \frac{t n^{\frac{d - 1}{4}} \|A\|_{2 \rightarrow 2}} {n^{\frac{d - 1}{2}} \|A\|_{2 \rightarrow 2} \|A\|_F} \right)^\frac{2}{\kappa} =
			\left( \frac{t} {n^{\frac{d - 1}{4}} \|A\|_F} \right)^\frac{2}{\kappa} \geq
			\frac{t^2} {n^{\frac{d - 1}{2}} \|A\|_F^2},
		\end{align*}
		implying
		\begin{align*}
			\mathbb{P}\left( \left| \|A X\|_2 - \|A\|_F \right| > t \right) \leq e^2 \exp \left( - C_2(d) \frac{t^2} {n^{\frac{d - 1}{2}} \|A\|_F^2} \right).
		\end{align*}		
	\end{proof}
	
	\section{Discussion}
	
	Our main result Theorem \ref{thm:Ax_concentration} controls $\|A X \|_2$ and thus extends a recent concentration result for random tensors by Vershynin \cite{vershynin_random_tensors} to hold for all $t \geq 0$.  Even within the range of $t$ already covered by \cite{vershynin_random_tensors}, our result can provide stronger bounds, in particular for matrices whose $\|\cdot\|_F$ is smaller in relation to their $\|\cdot\|_{2 \rightarrow 2}$ norm such as low-rank matrices. In addition, with Corollary \ref{cor:moments_non_squared}, we provide moment bounds for this situation which are provably tight up to constant factors depending on $d$.
	
	As mentioned in the introduction, Theorem \ref{thm:main} could also be derived from Theorem 1.4 in \cite{adamczak2015concentration} which gives moment bounds in terms of all expected partial derivatives of the chaos with respect to all entries of $X^{(1)}, \dots, X^{(d)}$. Due to the large number of these derivatives, we believe that the proof presented in this work should be more insightful and directly usable. Our approach also provides the decoupling statement of Theorem \ref{thm:decoupling_total} which has been applied in \cite{kronecker_jl_preprint} and generalizes the result in \cite{riptojl} on constructing Johnson-Lindenstrauss embeddings from matrices satisfying the restricted isometry property to Johnson-Lindenstrauss embeddings with a fast transformation of Kronecker products.  The work in \cite{kronecker_jl_preprint}  is provably optimal and is made possible by the results of this work.


	\section*{Acknowledgments}
		R.W. is supported by AFOSR MURI FA9550-19-1-0005, NSF DMS 1952735, and NSF IFML 2019844.
		S.B. and F.K. have been supported by the German Science Foundation (DFG) in the context of the Emmy-Noether Junior Research Group KR4512/1-2.
		R.W. and F.K. gratefully acknowledge support from the Institute for Advanced Study, where this project was initiated.


	
	\printbibliography
	
\end{document}